\newcommand\ip[2]{\langle #1, #2\rangle}
\newcommand\absip[2]{\left|\langle #1, #2\rangle\right|}
\newcommand\eps{\varepsilon}
\newcommand\dico{\Phi}
\newcommand\atom{\phi}
\newcommand\one{\mathbf{1}}
\newcommand\pdico{\Psi}
\newcommand\patom{\psi}
\newcommand\indi{\mathbbm{1}}
\newcommand\holdiag{\mathcal{E}}
\newcommand\diag{\operatorname{diag}}
\newcommand\argmax{\operatorname{argmax}}
\newcommand\argmin{\operatorname{argmin}}
\newcommand{\R}{{\mathbb{R}}}
\newcommand{\E}{{\mathbb{E}}}
\renewcommand{\P}{{\mathbb{P}}}
\newcommand{\epps}{ \varepsilon } %{\max\{\| Z \weights \|, \delta\} }
\newcommand*{\rom}[1]{\expandafter\@slowromancap\romannumeral #1@}
\newcommand{\weights}{D_{\!\scaleto{ \sqrt{\pi}\mathstrut}{6pt}}}
\newcommand{\weightss}{D_{\pi}}
\newcommand{\sqrtpi}{\scaleto{\sqrt{\pi}\mathstrut}{6pt}}
\newcommand{\coeff}{D_{\beta}}
\newcommand{\coefficient}{\beta}
\newcommand{\coeffm}{D_{\alpha }}
\DeclareFontFamily{U}{wncy}{}
\DeclareFontShape{U}{wncy}{m}{it}{<->wncyi10}{}
\DeclareSymbolFont{UWCyr}{U}{wncy}{m}{it}
\newcommand{\holb}{N}
\DeclareMathSymbol{\hol}{\mathalpha}{UWCyr}{"49}
\newcommand{\alphamin}{\underline{\alpha}}
\newcommand{\deltamin}{\delta_{\star}}
\newcommand{\pimin}{\underline{\pi}}
\newcommand{\mata}{A}
\newcommand{\matb}{B}
\newcommand{\matbs}{T}
\newcommand{\transp}{^{\ast}}
\newcommand{\RN}[1]{\uppercase\expandafter{\romannumeral#1}}
\begin{document}

\title{Convergence of alternating minimisation algorithms for dictionary learning}

%\author{Karin Schnass
%\thanks{Karin Schnass is with the Department of Mathematics, University of Innsbruck, Technikerstra\ss e 13, 6020 Innsbruck, Austria,
%karin.schnass@uibk.ac.at.}
%}

%%jmlr 2 authors share affiliation
\author{\name Simon Ruetz \email simon.ruetz@uibk.ac.at\\
%%%co-authors in Innsbruck
	\name Karin Schnass \email karin.schnass@uibk.ac.at\\
	\addr  %Department of Mathematics\\
	Universit\"at Innsbruck\\
	Technikerstra\ss e 13\\
	6020 Innsbruck, Austria}

\editor{}

\maketitle
\begin{abstract}
  In this paper we derive sufficient conditions for the convergence of two popular alternating minimisation algorithms for dictionary learning - the Method of Optimal Directions (MOD) and Online Dictionary Learning (ODL), which can also be thought of as approximative K-SVD. We show that given a well-behaved initialisation that is either within distance at most $1/\log(K)$ to the generating dictionary or has a special structure, ensuring that each element of the initialisation only points to one generating element, both algorithms will converge with geometric convergence rate to the generating dictionary. This is done even for data models with non-uniform distributions on the supports of the sparse coefficients. These allow the appearance frequency of the dictionary elements to vary heavily and thus model real data more closely.
  
\end{abstract}

% REQUIRED
\begin{keywords}
dictionary learning; sparse matrix factorisation; sparse coding; Method of Optimal Directions; MOD; Online Dictionary Learning; ODL; K-Singular Value Decomposition; K-SVD; convergence; non-uniform support distribution; rejective sampling
\end{keywords}

% REQUIRED by SIAM
% \begin{AMS}
%  \textcolor{red}{ 68Q25, 68R10, 68U05}
% \end{AMS}

\section{Introduction}

The goal of dictionary learning is to find compact data representations by factorising a data matrix $Y  \in \R^{d \times N}$ into the product of a dictionary matrix $\dico \in \R^{d \times K}$ with normalised columns and a sparse coefficient matrix $X\in \R^{K \times N}$
\begin{align}\label{dict1}
    Y \approx \dico X \quad \text{and} \quad X \; \text{sparse}.
\end{align}
The sparsifying dictionary or sparse components of a data class can be used to identify further structure in the data, \cite{olsfield96}, or simply be exploited for many data processing tasks, such as signal restoration or compressed sensing, \cite{mabapo12,do06cs, carota06}.
Following the seminal Bayesian approach,~\cite{olsfield96}, there exist several strategies and algorithms to tackle the above problem~\cite{olsfield96,ahelbr06, enaahu99,sc15,kreutz03,lese00,mabaposa10,sken10,rudu12} and a growing number of theoretical results to accompany them~\cite{grsc10, spwawr12, argemo13, sc14,  bagrje14, bakest14, argemamo15,  aganjane16, aganne17, suquwr17a, suquwr17b, bach17, sc18, qu19, pasc23}. These fall mainly into two categories - optimisation based approaches and graph clustering algorithms. While graph clustering algorithms have stronger theoretical success guarantees, especially in the overcomplete case $K>d$, \cite{aganne17, argemo13}, optimisation based approaches, in particular alternating optimisation algorithms, are extremely successful and popular in practice. The common starting point of three golden classics among these alternating optimisation algorithms, the Method of Optimal Directions (MOD), \cite{enaahu99}, the K-Singular Value Decomposition (K-SVD), \cite{ahelbr06} and the algorithm for Online Dictionary Learning (ODL), \cite{mabaposa10}, is the following programme. Given the data matrix $Y= \left( y_1 , \ldots, y_N \right)$ and a dictionary size $K$ 
find a dictionary $\pdico= \left( \patom_1, \ldots , \patom_K \right)$ and sparse coefficients $X=(x_1,\ldots x_N)$ that optimise
\begin{align}
    \argmin_{\pdico, X} \| Y - \pdico X \|^2_F \quad \quad \text{s.t.}\quad X\in \mathcal{S} \quad \text{and} \quad \|\patom_k\|_2 = 1   \quad  \text{for all } k\label{dict2}.
\end{align}
The set $\mathcal{S}$ enforces sparsity on the coefficients matrix, for instance by allowing only $S$ non-zero coefficients per column, that is $\|x_n\|_0\leq S$. The norm constraint on the dictionary elements, also called atoms, removes the scaling ambiguity between the dictionary and the coefficients. Note that there remains a sign ambiguity, so for any solution to the problem above, we get $2^K$ equivalent solutions by flipping the signs of the atoms and adjusting $X$ accordingly. Since the problem is not convex there might exist even more local and/or global minima and to find a local minimum one cannot use gradient descent as the gradient with respect to $(\pdico, X)$ does not have a closed form solution. This is why the three classic algorithms from above employ
alternating optimisation and alternate between optimising the coefficients while keeping the dictionary fixed and optimising the dictionary while keeping the coefficients fixed. To update the coefficients, one aims for instance to find 
\begin{align}
    \hat{X} = \argmin_{X} \| Y - \pdico X \|^2_F = \argmin_{X} \sum_n \|y_n -\pdico x_n\|_2^2 \quad \quad \text{s.t.} \quad \|x_n\|_0 \leq S,\label{opt_coeff}
\end{align}
which corresponds to $N$ sparse approximation problems. While solving these problems exactly is NP-complete in general, \cite{na95,til15}, and thus unfeasible for large $S$, there exist many efficient routines, which have success guarantees under additional conditions and perform well in practice, such as OMP, \cite{parekr93}, used for MOD and K-SVD. LARS, \cite{efhajoti04}, used for ODL solves a related problem.
In this paper we consider simple thresholding, which is on par with computationally more involved algorithms like OMP in a dictionary learning context, \cite{parusc21}, and still relatively easy to analyse theoretically.\\
Conversely, to update the dictionary for fixed coefficients $\hat{X}$ one aims to find
\begin{align}
    \argmin_{\pdico} \| Y - \pdico \hat X \|^2_F  =:f(\pdico) \quad\quad \text{s.t.} \quad \|\patom_k\|_2 = 1 . \label{opt_dico}
\end{align}
The approach leading to the MOD update, \cite{enaahu99}, is to give up the unit norm constraint on the dictionary elements in~\eqref{opt_dico}. The modified problem then has a closed form solution, $\argmin_{\pdico}f(\pdico) = Y{\hat X}^{\dagger}$, so one can simply update the dictionary as $Y {\hat X}^{\dagger} D$, where $D$ is a diagonal matrix ensuring that each atom has unit norm. \\
Another idea is to solve~\eqref{opt_dico} approximately with one step of projected block coordinate descent. So we first calculate the gradient of $f$ with respect to $\pdico$ resulting in $$\nabla_{\pdico} f(\pdico) = - \sum (y_n - \pdico \hat x_n) \hat x_n \transp = - Y {\hat X}\transp + \pdico {\hat X} {\hat X}\transp.$$
We then choose adapted step sizes for each dictionary element stored in the diagonal matrix $\Lambda$ and -- with $D$ again a diagonal matrix ensuring normalisation -- update the dictionary as
\begin{align}
   \left[\pdico + ( Y \hat X\transp - \pdico \hat  X \hat  X\transp)\cdot\Lambda\right]\cdot D = \left[\pdico\Lambda^{-1} - Y \hat X\transp + \pdico \hat X \hat X\transp\right] \cdot \Lambda D.\label{odl_update}
\end{align} 
The particular choice $\Lambda^{-1} = \diag(\hat X \hat X\transp)$ leads to the dictionary update in ODL, \cite{mabapo12}.\\
Finally, for K-SVD the update of the $k$-th atom and corresponding coefficients can be derived from \eqref{opt_dico} or rather \eqref{dict2} by fixing $\pdico,\hat X$ except for the $k$-th atom $\patom_k$ and its corresponding locations (but not values) of non-zero entries in the $k$-th row of $\hat X$ and optimising for both using a singular value decomposition. For more details on the update we refer to \cite{ahelbr06}. However, we want to point out that K-SVD and ODL can be seen as closely related. Indeed if we take an approximate version of K-SVD, known as aK-SVD, \cite{ruziel08}, where the SVD is replaced with a 1 step power iteration, and additionally skip the coefficient update during the dictionary update, we arrive again at ODL, see 
\cite{ru22diss} for more details.
To keep the close link in mind we will refer to the dictionary update as in \eqref{odl_update} as ODL (aK-SVD) update.\\
As already mentioned the disadvantage of the practically efficient, alternating optimisation algorithms for dictionary learning is that they are less supported by theory.
In particular, when assuming that the data follows a random sparse model with an overcomplete generating dictionary $\dico$, there are no global recovery guarantees, in the sense that one of the algorithms above or a variant will recover $\dico$ with high probability. 
\begin{algorithm}[tb]  \label{dl-algos}
\caption{MOD and ODL (aK-SVD) - one iteration}  
\DontPrintSemicolon
  \SetAlgoLined
  \SetKwInOut{Input}{Input}
  \SetKwInOut{Output}{Output}
  \Input{ $\pdico,Y,S,\kappa$ (with $\kappa > 1$)}
   Set $\hat{X} = ( \hat{x}_1,\ldots ,\hat{x}_N )=0$ \;
  \ForEach{$n$}{
    $\hat{I}_n = \argmax_{I : |I| = S } \|\pdico_I\transp y_n \|_1$ \tcp*{thresholding}
    $\hat{x}_n(\hat{I}_n) \leftarrow \pdico_{\hat{I}_n}^{\dagger} y_n$ \tcp*{coefficient estimation}
    \If{$\|\hat{x}_n\|_2 \geq  \kappa \|y_n\|_2$}{
        $\hat{x}_n \leftarrow 0$ \tcp*{set pathological estimates to zero}}
        }
    $\pdico \leftarrow\begin{cases}
      Y \hat{X}^{\dagger} &\\
       Y \hat X \transp- \pdico \hat X \hat X\transp + \pdico \diag(\hat X \hat X\transp)& 
    \end{cases}  \qquad \qquad \quad \quad \: \:
     \begin{array}{r}
      \verb|// MOD update|\\
      \verb|// ODL (aK-SVD) update|\\
    \end{array}  $

    $\pdico \leftarrow (\patom_1/ \| \patom_1\|_2,\ldots ,\patom_K/ \| \patom_K\|_2 )$ \tcp*{atom normalisation }
    \Output{$\pdico$}

\end{algorithm}

% \begin{algorithm}[tb]
% \caption{MOD and ODL (aK-SVD) - one iteration}
% \label{dl-algos}
% \begin{algorithmic}
% \STATE{Given $\pdico,Y,S,\kappa$ (with $\kappa>1$)}
% \STATE{Set $\hat{X} = ( \hat{x}_1,\ldots ,\hat{x}_N )=0$}
% \FORALL{$n$}
%     \STATE $\hat{I}_n = \argmax_{I : |I| = S } \|\pdico_I\transp y_n \|_1$ 
%     \STATE $\hat{x}_n(\hat{I}_n) \leftarrow \pdico_{\hat{I}_n}^{\dagger} y_n$ 
%     \IF{$\|\hat{x}_n\|_2 \geq  \kappa \|y_n\|_2$}
%     \STATE $\hat{x}_n \leftarrow 0$
%     \ENDIF
% \ENDFOR
% \STATE $\pdico \leftarrow\begin{cases}
%       Y \hat{X}^{\dagger} &\\
%        Y \hat X \transp- \pdico \hat X \hat X\transp + \pdico \diag(\hat X \hat X\transp)& 
%     \end{cases}  \qquad \quad \quad \: \:
%      \begin{array}{r}
%       \verb|// MOD update|\\
%       \verb|// ODL (aK-SVD) update|\\
%     \end{array}  $
% \STATE $\pdico \leftarrow (\patom_1/ \| \patom_1\|_2,\ldots ,\patom_K/ \| \patom_K\|_2 )$
% \RETURN $\pdico$
% \end{algorithmic}
% \end{algorithm}

\paragraph{Our Contribution}
In this paper we will cement the theoretical foundations of dictionary learning via alternating minimisation by characterising the convergence basin for both MOD and ODL (aK-SVD) in combination with thresholding as sparse approximation algorithm as summarised in Algorithm~\ref{dl-algos}.
In particular we will show that under mild conditions on the generating dictionary, any initialisation within atom-wise $\ell_2$-norm and scaled operator norm distance of order $1/\log K$ will converge exponentially close to the generating dictionary given enough training signals. Moreover, we provide further conditions on an initialisation that ensure convergence even for distances close to the maximal distance $\sqrt{2}$.\\ 
In case of ODL (aK-SVD) this is the first result of this kind. For MOD a convergence radius of order $1/S^2$ was derived in \cite{aganjane16}, however, our results show that even under milder assumptions the basin of convergence is several orders of magnitude larger. Finally, our most important contribution is that we provide the first theoretical results which are valid for data generating models where the non-zero entries of the generating coefficients are not essentially uniformly distributed. This means that some dictionary elements can be more likely to appear in the data representation than others. Such a non-homogeneous use of dictionary elements can usually be observed in dictionaries learned on image or audio data. Our results, proving the stability and convergence of alternating minimisation methods for non-uniform sparse support models, might be an explanation for their practical advantages over graph clustering or element-wise dictionary learning methods.
%%%
\paragraph{Organisation}
 The remainder of the paper is organised as follows. After introducing the necessary notation in Section~\ref{sec:notations:dict} we will define our random signal model and give some intuition why MOD and ODL should converge in Section~\ref{sec_probmodel}. We then provide our main result together with some explanations and connections to other work and the proof of the main theorem in Section~\ref{sec:main:dict}. However, to make the proof more accessible, the four lemmas it relies on are deferred to Appendix~\ref{app:4_inequalities} and further technical prerequisites to Appendix~\ref{app:technical}. We conclude with a discussion of our results and an outline of future work in Section~\ref{sec:discussion}.

\noindent

\section{Notation and setting}\label{sec:notations:dict}
The notation and setting follows~\cite{rusc22,ru22diss} very closely. Let $A \in \R^{d \times K}$ and $B \in \R^{K,m}$. Let $A_k$ and $A^k$ denote the $k$-th column and $k$-th row of $A$ respectively and $A\transp$ denote the transpose of the matrix $A$. For $1 \leq p,q,r \leq \infty$ we set $\| A \|_{p,q} := \max_{\| x\|_q = 1} \| Ax \|_p $. Recall that $\| A B \|_{p,q} \leq \|A \|_{q,r} \|B\|_{r,p}$ and $\|Ax \|_{q} \leq \|A\|_{q,p} \|x \|_{p}$. Often encountered quantities are $\| A \|_{2,1} = \max_{k \in \{1, \dots , K \}}\|A_k \|_2 $ and $\| A \|_{\infty, 2} = \max_{k \in \{1, \dots , d \}}\|A^k \|_2,$
which denote the maximal $\ell_2$ norm of a column resp. row of $A$. For ease of notation we sometimes write $\| A \| = \| A \|_{2,2}$ for the largest absolute singular value of $A$. For a vector $v \in \R^{d}$, we denote by $\underline{v} := \min_{i} |v_i|$ the smallest absolute value of $v$ and $\|v \|_{\infty}$ the maximal entry of $v$. 
For a subset $I\subseteq \mathbb{K}:=\{ 1,\dots ,K \}$, called the support, we denote by $A_I$ the submatrix with columns indexed by $I$ and $A_{I,I}$ the submatrix with rows and columns indexed by $I$.  We further set $R_I := (\mathbb{I}_I )\transp \in \R^{|I| \times K}$, allowing us to write $A_I = A R_I\transp$. This also allows us to embed a matrix $A_I\in \R^{d \times S}$ into $\R^{d \times K}$ by zero-padding via $A_I R_I \in \R^ {d \times K}$. We use the convention that subscripts take precedent over transposing, e.g. $A_I\transp = (A_I)\transp$. We denote by $\one_{I} \in \R^{K}$ the vector, whose entries indexed by $I$ are $1$ and zero else. Further, for any vector $v$ we denote by $D_v$ resp. $\diag(v)$ the diagonal matrix with $v$ on the diagonal and abbreviate
$D_{v\cdot w} : = D_v\cdot D_w$. Finally we write $\odot$ for the Hadamard Product (or pointwise product) of two matrices/vectors of the same dimension.\\
Throughout this paper we will denote by $\dico \in \R^{d \times K}$ the generating dictionary, i.e. the ground truth we want to recover, and by $\pdico \in \R^{d \times K}$ the current guess. Wlog we will assume that the columns of $\pdico$ are signed in way that the vector $\alpha \in \R^{K}$ defined via $\alpha_i := \ip{\atom_i}{\patom_i}$ has only positive entries. We will denote the $\ell_2$-distance between dictionary elements by
\begin{align}
\epps(\pdico, \dico) := \| \pdico -  \dico\|_{2,1} = \max_i \| \patom_i - \atom_i \|_2 \quad \Leftrightarrow \quad \epps(\pdico, \dico)^2 = 2-2\alphamin.
\end{align}
If it is clear from context, we will sometimes write $\epps$ instead of $\epps(\pdico,\dico)$. An important variable which will be used frequently throughout this paper is $Z: = \pdico - \dico$ --- the difference matrix between the generating dictionary $\dico$ and a guess $\pdico$. We define the distance between $\dico $ and $\pdico$ as
\begin{align}
    \delta(\pdico,\dico) := \max \Big\{ \|(\pdico - \dico)\weights \|, \| \pdico -  \dico\|_{2,1} \Big\} = \max \Big\{ \|Z\weights \|, \| Z\|_{2,1} \Big\}.
\end{align}
This might not seem intuitive at first glance, but to show convergence we have to control the weighted operator norm of the difference matrix as well as the $\ell_2$-distance. Again, if it is clear from context we will simply write $\delta$.  

\section{Probabilistic model}\label{sec_probmodel}
As was noted in the introduction we want the locations of the non-zero coefficients to follow a non-uniform distribution, allowing some dictionary elements to be picked more frequently than others. 
\begin{definition}[Poisson and rejective sampling]\label{def_prob_models}
Let $\delta_k$ denote a sequence of $K$ independent Bernoulli $0$-$1$ random variables with expectations $0 \leq p_k \leq 1$ such that $\sum_{k =1}^K p_k = S$ and denote by $\P_B$ the probability measure of the corresponding Poisson sampling model. 
We say the support $I$ follows the Poisson sampling model, if $I  := \left\{ k \; \middle| \; \delta_k  = 1\right\}$ and each support $I \subseteq \mathbb{K}$ is chosen with probability
\begin{equation}\label{ind_dist:dict}
    \P_B(I)=\prod_{i \in I}p_i\prod_{j \notin I}(1-p_j).
\end{equation}
We say our support $I$ follows the rejective sampling model, if each support $I \subseteq \mathbb{K}$ is chosen with probability
\begin{equation}\label{cond_dist:dict}
    \P_S(I) := \P_B(I \; | \;  |I| = S). 
\end{equation}
If it is clear from the context, we write $\P(I)$ instead of $\P_S(I)$.
\end{definition}
The Poisson sampling model would be quite convenient since the probability of one atom appearing in the support is independent of the others. Unfortunately, we need a model with exactly S-sparse supports, rather than supports that are S-sparse on average. The rejective model satisfies this second condition and still yields almost independent atoms, in the sense that we can often reduce estimates for rejective sampling to estimates for Poisson sampling, \cite{rusc22}. Note also that unless we are in the uniform case, where $p_i = S/K$ for all $i$, the inclusion probabilities $\pi_i := \P(i\in I)$ are different from the parameters $p_i$. They can be related using \cite{rusc22}[Lemma 1], which we restate for convenience in Appendix~\ref{sec:supp_rand_mat}. We frequently use the square diagonal matrix $\weights := \diag((\sqrt{\pi_k})_k)$. Based on the rejective sampling model, we define the following model for our signals.
\begin{definition}[Signal model]\label{signal_model:dict}
Given a generating dictionary $\dico \in \R^{d \times K}$ consisting of $K$ normalized atoms, we model our signals as
\begin{align}
    y = \dico_I x_I  =  \sum_{i \in I} \atom_{i} x_{i}   , \quad x_{i} = c_{i} \sigma_{i}, %\; \; \forall i \in I,
\end{align}
where the support $I = \{i_1, \dots i_S \} \subseteq\mathbb{K}$ is chosen according to the rejective sampling model~\eqref{cond_dist:dict} with parameters $p_1, \dots , p_K$ such that $\sum_{i = 1}^K p_i = S$ and $0 < p_k \leq 1/6$, the coefficient sequence $c = (c_i)_i \in \R^K$ consists of i.i.d. bounded random variables $c_i$ with $0 \leq  c_{\min} \leq  c_i \leq c_{\max} \leq 1$ and the sign sequence $\sigma \in \{-1,1\}^K$ is a Rademacher sequence, i.e. its entries $\sigma_i$ are i.i.d with $\P(\sigma_i =\pm 1)= 1/2$. Supports, coefficients and signs are modeled as independent and we can write
$x = \one_I \odot c \odot \sigma$.
\end{definition}
The assumption $p_i \leq 1/6$ ensures that the $p_i$ and the corresponding inclusion probabilities of the rejective sampling model $\pi_i$ are not too different (see Theorem~\ref{th_poisson}). We further introduce the vector $\beta \in \R^K$ via $\beta_i := \E [ c_i^2 ]$ and denote by $D_{\coefficient}$ the corresponding diagonal matrix. To see how this signal model allows us to prove convergence of the algorithms, note that before normalisation, by using $Y = \dico X$, we can write the two dictionary update steps concisely as 
\begin{align*}
 \text{MOD:} \quad   & Y \hat{X} \transp (\hat{X} \hat{X}\transp)^{-1} = \dico X \hat{X} \transp (\hat{X} \hat{X}\transp)^{-1},\\
 \text{ODL:} \quad  &  \frac{1}{N} \left[Y \hat X \transp -\pdico \hat X \hat X\transp + \pdico \diag(\hat X \hat X\transp) \right] =  \frac{1}{N} \left[\dico X \hat X \transp - \pdico \hat X \hat X\transp + \pdico \diag(\hat X \hat X\transp) \right],
\end{align*}
where we scaled the update step of the ODL algorithm by a factor of $1/N$. The key is to show that both these update steps concentrate around $\dico$. To that end we define the two averages of random matrices
\begin{align}
    \mata:= \frac{1}{N} X \hat X\transp = \frac{1}{N}\sum_{n = 1}^N x_n \hat{x}_n \transp 
\quad \text{and} \quad 
    \matb := \frac{1}{N} \hat X \hat X\transp = \frac{1}{N} \sum_{n = 1}^N \hat{x}_n \hat{x}_n \transp. 
\end{align}
With these we can write the update step of MOD as $\dico A B^{-1}$ and that of ODL as $\dico \mata - \pdico [\matb -\diag(\matb)]$. We first take a closer look at the terms within the sums above, where for simplicity we drop the index $n$. Assuming that thresholding finds the correct support $I$, we can write $x\hat{x}\transp$ using the zero-padding operator $R_I\transp$ as
\[
x \hat{x} \transp = x (R_I\transp \pdico_I^\dagger y)\transp = x x\transp \dico \transp(\pdico_I^{\dagger})\transp R_I ,
\]
 Further assuming that $\pdico_I$ is well conditioned, meaning $\pdico_I \transp \pdico_I \approx \mathbb{I}$, we can approximate $\pdico_I^\dagger \approx \pdico_I\transp$, leading to
\[
x \hat{x} \transp \approx x x \transp \dico \transp\pdico_I R_I = x x \transp \dico \transp\pdico R_I\transp R_I = x x \transp \dico \transp\pdico \diag(\one_I) .
\]
As we modelled the generating coefficients as $x = c \odot \sigma \odot \one_1$, using the independence of $c,\sigma, I$ we get that in expectation over $c,\sigma$, 
$$\E_{c,\sigma}[x x \transp ] = \E_c[c c\transp] \odot \E_\sigma[\sigma \sigma \transp] \odot (\one_I \one_I\transp) = \coeff  \diag(\one_I) = \diag(\one_I)\coeff.$$ 
So the empirical estimator $\mata=\frac{1}{N}\sum_{n = 1}^N x_n \hat{x}_n \transp\approx\E[x\hat x]$ will be well approximated by
\[
\E[x \hat{x} \transp] \approx \E  \diag(\one_I) \coeff \dico \transp\pdico \diag(\one_I) = (\coeff \dico \transp \pdico ) \odot \E[\one_I \one_I \transp].
\]
The matrix $\E[\one_I \one_I \transp]$ simply stores as $ij$-th entry how often $\{i,j\}\subseteq I$, meaning the diagonal entries are far larger than the off-diagonal ones, and we can approximate $\E[\one_I \one_I \transp]\approx D_{\pi} + \pi \pi \transp \approx D_{\pi}$.
Finally using that $\coeffm = \diag(\dico \transp \pdico)$ a similar analysis for $\matb$ yields
\begin{align*}
\mata \approx \E[x \hat{x} \transp] \approx (\coeff \dico \transp \pdico )\odot D_{\pi} = D_{\pi \cdot \alpha \cdot \coefficient} \quad \text{and} \quad \matb \approx \E[\hat x \hat{x} \transp]  \approx D_{\pi \cdot \alpha^2 \cdot \coefficient}.
\end{align*}
So before normalisation the updates via MOD and ODL should be approximately
\begin{align}
\dico A B^{-1} \approx \dico \coeffm^{-1} 
\quad \text{and} \quad
\dico \mata - \pdico [\matb -\diag(\matb)] \approx \dico D_{\pi \cdot \alpha \cdot \coefficient}. \label{heuristics}
\end{align}
This means that the output of both dictionary update steps after normalisation should be very close to the ground truth, and the proof boils down to quantifying the error in the approximation steps outlined above. 

%%%%%%%%%%%%%%
% !TEX root = mother_file.tex
%%%%%%%%%%%%%%

%%%%%%%%%%%%%%

%% Intro blabla

\section{Main result}\label{sec:main:dict}
Concretely, we will prove the following theorem.
\begin{theorem} \label{the_theorem}
Assume our signals follow the signal model in~\ref{signal_model:dict}.
Define 
\[
\alphamin := \min_{k} \absip{\patom_k}{\atom_k} = 1 - \epps^2/2,  \quad  \gamma : =\frac{c_{\min}}{c_{\max}} \quad \text{and} \quad \rho := 2 \kappa^2 S^2 \gamma^{-2} \alphamin^{-2}\pimin^{-3/2}  ,
\]
with $\kappa^2\geq2$ and let $C,n$ be two universal constants, where $C$ is no larger than $42$ and $n$ no larger than $130$. Denote by $\deltamin$ the desired recovery accuracy and assume $\deltamin \log\left(n K \rho / \deltamin \right) \leq \gamma / C $. We abbreviate $\nu = 1/\sqrt{\log\left(n K \rho / \deltamin \right)}<1$. If the atom-wise distance $\varepsilon = \varepsilon(\pdico,\dico)$ of the current guess $\pdico$ to the generating dictionary $\dico$ satisfies
\begin{align}
 \max\big \{  \nu \|\dico \weights \|, \mu(\dico)\big\} \leq \left( 1 - \frac{\epps^2}{2}\right) \cdot  \frac{\gamma}{4C \log\left( n K \rho / \deltamin \right)} \label{cond_dico}
\end{align}
and the current guess $\pdico$ additionally satisfies either
\begin{align}
 \max\big \{  \nu \|\pdico \weights \|, \mu(\pdico),\mu(\pdico,\dico) \big\}\leq  \left( 1 - \frac{\epps^2}{2}\right) \cdot \frac{ \gamma}{4C\log\left( n K \rho / \deltamin \right)} \label{regime1}
\end{align}
\begin{align}
& \text{or}\qquad \qquad   \delta(\pdico,\dico) \leq \frac{\gamma}{C\log\left( n K \rho / \deltamin \right)} =: \delta_{\circ},\label{regime2}\qquad
\end{align}
 then the updated and normalised dictionary $\hat{\pdico}$, which is output by ODL or MOD, satisfies
\begin{equation}
\delta(\hat{\pdico},\dico) \leq  \frac{1}{2} \cdot \big(\deltamin/2+  \min \left\{ \delta_{\circ}, \delta(\pdico,\dico) \right\}\big)=: \frac{1}{2} \cdot  \Delta ,\label{contraction}
\end{equation}
except with probability
\begin{equation}
60 K \exp{\left(- \frac{N (  \Delta/16)^2 }{2 \rho^2+ \rho   \Delta/16 }\right)}.\label{total_failure_bound}
\end{equation}
\end{theorem}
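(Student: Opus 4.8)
The plan is to establish the contraction inequality~\eqref{contraction} by tracking how the idealised updates in~\eqref{heuristics} deviate from the truth, and then to control all error terms with the concentration bound~\eqref{total_failure_bound}. First I would write both update rules before normalisation in terms of the two random matrices $\mata = \frac1N X\hat X\transp$ and $\matb = \frac1N \hat X \hat X\transp$: the MOD output is $\dico \mata \matb^{-1}$ (normalised), the ODL output is $\dico \mata - \pdico[\matb - \diag(\matb)]$ (normalised). Since both outputs are then normalised column by column, the quantity $\delta(\hat\pdico,\dico) = \max\{\|(\hat\pdico - \dico)\weights\|, \|\hat\pdico-\dico\|_{2,1}\}$ is unchanged if I replace $\dico$ by the un-normalised update $\ppdico$ and measure $\delta(\ppdico,\dico)$ after accounting for the diagonal renormalisation matrix; the standard trick here is that for any matrix $\ppdico$ with columns close to unit norm, normalising changes each atom by at most a quadratic-in-distance term, so it suffices to bound $\delta$ of the un-normalised update and absorb the normalisation into a constant. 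I would carry this reduction out first so that the rest of the argument is purely about $\mata$, $\matb$, and deterministic linear algebra.

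Next, the core of the argument is the decomposition of $\mata$ and $\matb$ into a "main" diagonal part plus an error. Writing $x\hat x\transp = x x\transp \dico\transp (\pdico_I^\dagger)\transp R_I$ on the event that thresholding recovers the correct support $I$, and splitting $\pdico_I^\dagger = \pdico_I\transp + (\pdico_I^\dagger - \pdico_I\transp)$, I would separate (i) the expectation over signs and coefficients, which by the independence structure in Definition~\ref{signal_model:dict} gives $\E_{c,\sigma}[xx\transp] = \coeff\diag(\one_I)$, (ii) the expectation over the support, which introduces $\E[\one_I\one_I\transp] \approx \weightss + \pi\pi\transp$, (iii) the empirical-vs-expectation fluctuation, (iv) the well-conditioning error $\pdico_I^\dagger \approx \pdico_I\transp$, and (v) the failure of thresholding to find $I$. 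The deterministic and expectation-level pieces (i), (ii), (iv) are exactly the computation sketched around~\eqref{heuristics}, and I expect the bound there to be governed by $\delta(\pdico,\dico)$, the coherence-type quantities $\mu(\pdico)$, $\mu(\dico)$, $\mu(\pdico,\dico)$, and the operator norms $\|\pdico\weights\|$, $\|\dico\weights\|$ — precisely the quantities appearing in hypotheses~\eqref{cond_dico}, \eqref{regime1}, \eqref{regime2}. This is where I would invoke the four lemmas referred to in the excerpt (deferred to the appendix): each of them presumably bounds one of $\mata - D_{\pi\cdot\alpha\cdot\beta}$, $\matb - D_{\pi\cdot\alpha^2\cdot\beta}$, or the induced error in the MOD resp.\ ODL update, in operator and $\|\cdot\|_{2,1}$ norm, under exactly the stated smallness conditions. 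The probabilistic pieces (iii) and (v) — the concentration of sums of i.i.d.\ rank-related matrices around their mean and the probability that thresholding picks the wrong support — are handled by a matrix Bernstein / Chernoff argument; the per-sample bound $\kappa\|y_n\|_2$ on $\|\hat x_n\|_2$ from the pathology check in Algorithm~\ref{dl-algos} is what makes the summands bounded, and $\rho$ is designed to be (up to constants) the resulting norm bound on a single summand, which is why $\rho$ appears in~\eqref{total_failure_bound}. A union bound over the $K$ atoms (and the $O(1)$ many matrix quantities we concentrate) produces the prefactor $60K$.

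Finally I would assemble the pieces. For MOD, $\dico\mata\matb^{-1} = \dico (D_{\pi\cdot\alpha\cdot\beta} + E_A)(D_{\pi\cdot\alpha^2\cdot\beta} + E_B)^{-1}$; expanding the inverse (valid since the hypotheses force $\|E_B D_{\pi\cdot\alpha^2\cdot\beta}^{-1}\| < 1/2$, say) gives $\dico \coeffm^{-1} + (\text{error})$, and since $\dico\coeffm^{-1}$ normalises column-wise back to $\dico$, the error — bounded in $\delta$ by the four lemmas — is what remains. For ODL, $\dico\mata - \pdico[\matb - \diag(\matb)]$: here the off-diagonal part of $\matb$ is itself an error term (the true $\matb$ is nearly diagonal), $\pdico\diag(\matb)$ contributes $\pdico D_{\pi\cdot\alpha^2\cdot\beta}$ which combines with $\dico D_{\pi\cdot\alpha\cdot\beta}$ to give $[\dico - (\pdico - \dico)(\alpha - 1)\odot(\cdots)]D_{\pi\cdot\alpha\cdot\beta}$, i.e.\ a column-wise positive multiple of $\dico$ up to a quadratic-in-$Z$ correction (using $\alpha_k - 1 = -\eps_k^2/2$). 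In both cases I would collect the linear-in-$\delta(\pdico,\dico)$ part of the error and show its coefficient is at most $\tfrac14$ under~\eqref{cond_dico}–\eqref{regime2} — the factor $\tfrac{\gamma}{4C\log(\cdots)}$ in the hypotheses is tuned so that the contraction constant comes out below $\tfrac12$ — while the remaining constant-order part is absorbed into the additive $\deltamin/4$ term, and the probabilistic fluctuation contributes the other $\deltamin/4$ on the complement of the event in~\eqref{total_failure_bound}; together these give $\delta(\hat\pdico,\dico) \le \tfrac12(\deltamin/2 + \min\{\delta_\circ,\delta(\pdico,\dico)\})$. The main obstacle, I expect, is step~(iv)–(v) combined: controlling the event that thresholding recovers $I$ and simultaneously that $\pdico_I$ is well-conditioned uniformly over the (non-uniform, rejective-sampled) supports, and then propagating the resulting error through the matrix inverse for MOD without losing the $1/\log K$ rather than $1/S^2$ scaling — this is exactly where the rejective-to-Poisson reduction and the sharp control of $\pi$ versus $p$ (the $p_k \le 1/6$ assumption) have to be used carefully, and where the bulk of the technical work in the four appendix lemmas sits.
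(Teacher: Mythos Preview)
Your proposal is correct and follows essentially the same architecture as the paper's proof: reduce to the un-normalised update, control $\mata$ and $\matb$ against their diagonal idealisations via four concentration lemmas (matrix Bernstein on the i.i.d.\ summands, with the $\kappa\|y_n\|$ safeguard supplying the a.s.\ bound and the thresholding/conditioning failure events absorbed into the expectation estimate), expand the MOD inverse by Neumann series, and finish with the normalisation step. One small wrinkle: your ODL sketch is slightly garbled --- in $\dico\mata - \pdico[\matb-\diag(\matb)]$ the term $\pdico\diag(\matb)$ does not ``combine'' with $\dico D_{\pi\cdot\alpha\cdot\beta}$, it has already cancelled against the diagonal of $\pdico\matb$; the paper instead scales the whole update by $(D_{\pi\cdot\alpha\cdot\beta})^{-1}$ and bounds $\bar\pdico - \dico$ directly, which is cleaner than the quadratic-in-$Z$ bookkeeping you suggest but amounts to the same thing.
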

To make the theorem more accessible, we now provide a detailed discussion of the implications and assumptions.
\begin{description}[wide, labelwidth=0pt, labelindent=0pt]
\item[Convergence] First note that a repeated application of the theorem proves convergence of MOD and ODL (aK-SVD) up to accuracy $\deltamin$, assuming a generating dictionary and an initialisation that satisfies the conditions above and given a new batch of training signal in each iteration of size which scales at worst as $N \approx \log K \rho^2/\deltamin^2$. 
To see this, note that the theorem ensures that the distance between the generating dictionary and the output dictionary decreases fast enough for condition \eqref{regime2} to be satisfied after one step.
Once an input dictionary satisfies $\deltamin \leq \delta( \pdico,\dico) \leq \delta_\circ$ each iteration will w.h.p. shrink the distance by a factor $\eta \leq 3/4$, since we have $$\delta(\hat \pdico,\dico) \leq \tfrac{1}{4} \cdot \deltamin + \tfrac{1}{2} \cdot \delta( \pdico,\dico) \leq \tfrac{3}{4} \cdot \delta( \pdico,\dico).$$
As the conditions on the generating dictionary get easier to satisfy in each step, we can iterate this argument to get convergence.\\
Finally, if an input dictionary already satisfies $\delta( \pdico,\dico) \leq \deltamin$, that is, we reached the desired precision, the contraction property \eqref{contraction} guarantees that the output dictionary still satisfies $\delta(\hat \pdico,\dico) \leq \tfrac{3}{4} \deltamin \leq \deltamin$. This stability is due to the assumption that the target recovery accuracy satisfies $\deltamin \log\left(n K \rho / \deltamin \right) \leq \gamma /C$ meaning $\deltamin \leq \delta_\circ$. Going through the proof shows that without this assumption, for instance because there are not enough training signals for a higher accuracy available, the theorem above is still true and the target distance will be achieved after one step. In this case however, since the output dictionary might no longer satisfy \eqref{regime1}, yet not be close enough to satisfy \eqref{regime2}, it is not guaranteed that a further iteration will preserve this distance.

\item[Generating dictionary]
The first condition in \eqref{cond_dico} contains as implicit conditions on the generating dictionary that 
\begin{align} \|\dico \weights\| \lesssim \frac{1}{\sqrt{\log K}}\quad \text{and} \quad \mu(\dico) \lesssim \frac{1}{\log K}, \label{cond_gen_dico}
\end{align} which together ensure that $\dico$ is a sensible dictionary. It guarantees that most random sparse supports are well-conditioned, that is $\| \dico_I \transp \dico_I - \mathbbm{I}\| \leq \vartheta < 1 $. This further means that most signals generated by the model have a stable representation, $y = \dico_I x_I$ with $\|y\|_2 \approx \|x_I\|_2$, and could be identified by a sparse approximation algorithm that has access to $\dico$.
To see that the condition is quite mild, note that for uniformly distributed supports, $p_i = \pi_i = S/K$, and $\dico$ a unit norm tight frame, $\|\dico\| = K/d$, we can rewrite it as $S\leq d/\log K$, which is for instance a common requirement in compressed sensing.

\item[Initialisation (input dictionary):] 
The condition in \eqref{cond_dico} further limits the maximal atom-wise distance of an initialisation. Assuming again uniformly distributed supports and a tight generating dictionary it corresponds to
\[
\epps(\pdico, \dico) = \max_k \| \patom_k -  \atom_k \| \lesssim \left( 2 -  2\sqrt{\frac{S \log K}{d}} \right)^{1/2}
\]
Considering that the maximal distance between two unit norm vectors is $\sqrt{2}$, this means that convergence is possible even from far away initialisations. However, to actually converge we have additional requirements. The initialisation needs to be well behaved, which is ensured by \eqref{regime1} via $\|\pdico \weights\| \lesssim \frac{1}{\sqrt{\log K}}$ and $\mu(\pdico) \lesssim \frac{1}{\log K}$, similar to \eqref{cond_gen_dico} for the generating dictionary. The more stringent and interesting requirement in \eqref{regime1} is the condition on the cross-coherence $\mu(\pdico,\dico)$ or rather the cross-Gram matrix $\pdico\transp\dico$. As the minimal entry on its diagonal is $\alphamin = (1-\eps^2/2)$ it translates to
\[
\max_{i \neq j} \absip{\patom_i}{\atom_j} \cdot \log K  \lesssim \min_k \absip{\patom_k}{\atom_k},
\]
or $\pdico\transp\dico$ being diagonally dominant. Intuitively, \eqref{cond_dico} and \eqref{regime1} mean that the admissible distance can be very close to $\sqrt{2}$, as long as the initialisation is a well-behaved dictionary and no two estimated atoms point to the same generating atom, meaning it is clear to the sparse approximation algorithm which estimated atom belongs to which generating atom. While it might be possible to relax this separation condition, it is unlikely that it can be removed in the overcomplete case. Indeed, following the guidelines in \cite{pasc23}, one can construct well-behaved incoherent initialisations for which both MOD and ODL converge to a local minimum that is not equivalent to the generating dictionary.\\
Finally, if the initialisation is within distance $1/\log K$ to the generating dictionary, meaning condition \eqref{regime2} is satisfied, it is automatically well-behaved and has a diagonally dominant cross-Gram matrix, because it inherits these properties from the generating dictionary, which by \eqref{cond_dico} is well behaved and incoherent.

\item[Number of signals:]
From the probability bound in~\eqref{total_failure_bound} we see that in order for the failure probability in each step to be small, the number of the fresh signals per iteration has to be approximately
\[
N \approx \frac{\rho^2}{\deltamin^2} \cdot \log K \approx \frac{1}{\pimin^{3} \deltamin^2} \cdot \log K.
\]
This ensures that even the most rarely appearing atoms are seen often enough to learn them properly. The relation above
reflects the general dependencies, meaning we need more training signals for higher accuracy and more imbalanced atom distributions, but is a little too pessimistic. We expect that the scaling can be reduced to $N \approx \log K \deltamin^{-1} \pimin^{-3/2}$ and even $N\approx K \log K/\deltamin$ in the uniform case, but leave the endeavour to those still motivated after reading the current proof\footnote{Hints how to proceed can be found after the proof Lemma~\ref{lemma_claim4}}.

\item[Attainable accuracy:] 
We have already seen that the target accuracy should not be chosen too large in order to have stable convergence and that the number of training signals required in each iteration should grow with the desired accuracy. Another interesting observation is that even given an arbitrarily close initialisation and arbitrarily many training signals, the best attainable accuracy is limited by the coherence and conditioning of the generating dictionary via \eqref{cond_dico} --- even in the noiseless case considered here. 
The main reason is that~\eqref{cond_dico} is a very light condition which does not exclude the existence and rare selection of sparse supports, leading to ill-conditioned or even rank-deficient matrices $\pdico_I$. Such supports cannot be recovered by thresholding or any other sparse approximation algorithm. This failure probability stops the algorithm from attaining arbitrarily small precision. If we exchange~\eqref{cond_dico} by the more restrictive assumption $2S \mu(\dico) <1$ then all supports of size $S$ are well enough conditioned to be identified by a sparse algorithm like OMP using $\dico$ (or some very small perturbation of it). This means that $\dico$ is a fixed point of the algorithm and that for a close enough initialisation we have convergence to $\dico$ in expectation.

\item[Comparison to existing results for MOD:] Finally, we can compare our result with that in \cite{aganjane16} for MOD with an $\ell_1$-minimisation based sparse approximation routine. Assuming a signal model with uniformly selected random supports and a non-zero coefficient distribution, which is bounded from above, it is shown that for an approximately tight dictionary with coherence $\mu(\dico) \lesssim 1/\sqrt{d}$ and a sparsity level $S\lesssim d^{1/6}$ MOD will converge from any initialisation satisfying $\eps(\pdico,\dico)\lesssim 1/S^2$, given enough training signals.
In this special case our assumptions on the generating dictionary --- $\mu(\dico) \lesssim 1/\log K$ and $S \lesssim d/\log(K)$ --- are more relaxed. As already seen, this comes at the price of a theoretical limit on the achievable accuracy, which in practice, however, is determined by the number of available signals. Further, we also have lighter conditions on the distance between the initialisation and the generating dictionary. In particular, for a tight dictionary our conditions can be written as
\begin{align*}
    \eps(\pdico,\dico)\lesssim \frac{1}{\log K}\quad \text{and} \quad \| \pdico -\dico \| \lesssim \sqrt{\frac{K}{S\log^2 K}},
\end{align*}
whereas the assumption in~\cite{aganjane16} that $\eps(\pdico,\dico)\lesssim 1/S^2$ implies $\|\pdico - \dico\| \lesssim \sqrt{K}/S^2$ by using $\|A\|\leq \|A\|_F\leq \sqrt{K} \|A\|_{2,1}$. Hence the assumptions in~\cite{aganjane16} are more restrictive.\\
These restrictions seem due to using $\ell_1$-minimisation for sparse approximation, which on the other hand has the advantage that it does not require the distribution of the non-zero coefficients to be bounded away from zero. However, such an assumption is used in the graph clustering algorithm suggested in \cite{aganjane16} to get an initialisation, that satisfies the convergence conditions. This indicates that such a condition is necessary to get a larger convergence area.
\end{description}

\begin{proof}[Proof of Theorem~\ref{the_theorem}]
We first collect the results of Lemmas~\ref{lemma_claim1}-\ref{lemma_claim4}. Writing 
\begin{align}
T:=(D_{\sqrtpi \cdot \alpha} )^{-1} \matb (D_{\sqrtpi \cdot \alpha \cdot \coefficient})^{-1}\quad \text{and} \quad \mathbb{I}_{\ell^c} : = \mathbb{I} -e_\ell e_\ell\transp
\end{align}
for convenience, we get that except with failure probability as in~\eqref{total_failure_bound} we have 
\begin{align*}
    \| \dico \mata (D_{\sqrtpi \cdot \alpha \cdot \coefficient})^{-1} - \dico \weights \|_{2,2} \leq  \alphamin  \Delta /8 \qquad  \text{and} \qquad
     \| \matbs - \mathbbm{I}\|_{2,2} \leq \Delta/4,
\end{align*}
as well as for all $\ell \in \{1, \cdots, K\}$ 
\begin{align*}
   \| \dico \mata (D_{\pi \cdot \alpha \cdot \coefficient})^{-1} e_{\ell} - \atom_{\ell}  \|_2 & \leq  \frac{\Delta}{8} \quad \text{and} \quad 
   \max\left\{ \frac{\gamma \alphamin \nu}{4C} , \| \pdico \weights\| \right\} \cdot \| \mathbb{I}_{\ell^c}\matbs \, e_{\ell} \pi_{\ell}^{-1/2} \|_2  \leq  \frac{ \Delta}{8}.
\end{align*} 

Using these 4 inequalities we show that for both algorithms a properly scaled version of the updated dictionary, which we denote by $\bar{\pdico}$, contracts towards the generating dictionary. Concretely, we show that for some constants $s_\ell$ close to 1
\begin{align}\label{heuristics:1}
  \| ( \bar{\pdico} - \dico)  \weights \| \leq  \Delta/4 \quad \text{and} \quad  \max_\ell \| \bar{\patom}_\ell - s_\ell \atom_\ell  \| \leq   \Delta/3.
\end{align}
Together these bounds guarantee that the normalised version of the updated dictionary $\hat \pdico$ satisfies $\delta(\hat \pdico, \dico) \leq  \Delta/2 $, meaning we have contraction towards the generating dictionary in the weighted operator norm and the maximum column norm simultaneously. We start with the proof of ODL which is a bit simpler.

\paragraph{ODL:}
Motivated by~\ref{heuristics}, we define a scaled version of the updated dictionary $\bar{\pdico } :=  \left[\dico \mata - \pdico \matb  + \pdico \diag(\matb)\right] (D_{\pi \cdot \alpha \cdot \coefficient} )^{-1},$
which ensures that on average $\bar{\pdico}$ concentrates around $\dico$. The scaling does not change the underlying algorithm, since we have a normalisation step at the end of each iteration, which we will analyse afterwards. We decompose $\bar \pdico$ as
\begin{align}
    \bar{\pdico } &= \dico \mata (D_{\pi \cdot \alpha \cdot \coefficient} )^{-1} - \pdico [\matb- \diag(\matb)](D_{\pi \cdot \alpha \cdot \coefficient})^{-1} \nonumber \\
    & =
    \dico \mata (D_{\pi \cdot \alpha \cdot \coefficient} )^{-1} - \pdico D_{\sqrtpi \cdot \alpha} \left[(D_{\sqrtpi \cdot \alpha})^{-1} \matb (D_{\pi \cdot \alpha \cdot \coefficient})^{-1}  - \weights^{-1}\right] \nonumber \\
    & \hspace{4cm} + \pdico D_{\sqrtpi \cdot \alpha} \left[(D_{\sqrtpi \cdot \alpha})^{-1} \diag(\matb)(D_{\pi \cdot \alpha \cdot \coefficient} )^{-1}- \weights^{-1}\right]\notag\\
    & =  \dico \mata (D_{\pi \cdot \alpha \cdot \coefficient} )^{-1} - \pdico D_{\sqrtpi \cdot \alpha} \left(\matbs - \mathbbm{I}\right) \weights^{-1} + \pdico D_{\sqrtpi \cdot \alpha} \left(\diag(\matbs)- \mathbbm{I}\right)\weights^{-1}. \label{odl_2norm_split}
\end{align}
We first show contraction in the weighted operator norm. Note that in both regimes we have $\|\pdico \weights\|\leq 2/C$, either by direct assumption or based on the bound 
$$\|\pdico \weights\|\leq \|\dico \weights\| + \|(\pdico - \dico) \weights\| \leq \|\dico \weights\| +\delta \leq \|\dico \weights\| +\delta_\circ.$$ With the expression for the updated dictionary $\bar{\pdico}$ in \eqref{odl_2norm_split} and using the fact that $\|D_\alpha\| \leq 1$ we can bound the operator norm of the difference $(\bar{\pdico}- \dico)\weights$ as
\begin{align*}
    \|(\bar{\pdico } - \dico) \weights\| & \leq \underbrace{\| \dico \mata (D_{\sqrtpi \cdot \alpha \cdot \coefficient })^{-1} - \dico \weights \|}_\text{\clap{$ \leq   \alphamin  \Delta/8$~\eqref{lemma_claim1}}}  +   \underbrace{2 \| \pdico D_{\sqrtpi \cdot \alpha} \|}_\text{\clap{$\leq 4 /C $}}  \underbrace{\cdot\: \| \matbs - \mathbbm{I}\|\:}_\text{\clap{$\cdot   \Delta/4$~\eqref{lemma_claim2}}} \leq \frac{ \Delta}{4}.
\end{align*}
Next we show that for each atom of the scaled dictionary the $\ell_2$-distance also decreases with each iteration. We access the $\ell$-th dictionary atom~$\bar\patom_\ell$ simply by multiplying $\bar{\pdico}$ with the standard basis vector~$e_{\ell}$. This yields
\begin{align}
   \bar\patom_\ell= \bar{\pdico } e_{\ell}&=\dico \mata (D_{\pi \cdot \alpha \cdot \coefficient} )^{-1}e_{\ell} - \pdico D_{\sqrtpi \cdot \alpha} \left[\matbs -\diag(\matbs)\right]\weights^{-1}e_{\ell} \nonumber \\
    % & =
    % \dico \mata (D_{\pi \cdot \alpha \cdot \coefficient})^{-1} e_{\ell} + \pdico D_{\sqrtpi \cdot \alpha  } \left[(D_{\sqrtpi \cdot \alpha  })^{-1} (\matb- \diag(\matb)) (D_{\sqrtpi \cdot \alpha \cdot \coefficient })^{-1}\right] e_{\ell} \pi_{\ell}^{-\frac{1}{2}} \nonumber \\
    & =
    \dico \mata (D_{\pi \cdot \alpha \cdot \coefficient})^{-1} e_{\ell} + \pdico D_{\sqrtpi \cdot \alpha} \, \mathbb{I}_{\ell^c}  \matbs\, e_{\ell} \pi_{\ell}^{-1/2}.
\end{align}
Using this decomposition together with our second set of inequalities we get
\begin{align*}
    \| \bar\patom_\ell-  \atom_{\ell} \| \leq \underbrace{\| \dico \mata (D_{\pi \cdot \alpha \cdot \coefficient})^{-1}e_\ell - \atom_{\ell}  \|}_\text{\clap{$\leq    \Delta/8$~\eqref{lemma_claim3}}} + \underbrace{\|\pdico D_{\sqrtpi \cdot \alpha  } \| \cdot\| \mathbb{I}_{\ell^c}\matbs e_{\ell}\pi_{\ell}^{-1/2}\|}_\text{\clap{$\leq   \Delta/ 8$~\eqref{lemma_claim4}}} \leq \frac{ \Delta}{4},
\end{align*}
which shows the second part in \eqref{heuristics:1} for $s_\ell =1$. To finish the proof we still need to show that \eqref{heuristics:1} guarantees contraction in the weighted operator norm and in the maximum column norm after normalisation. However, we postpone the analysis of the normalising step to after the analysis of the MOD algorithm, since it is the same for both algorithms.

\paragraph{MOD:} 
Turning to the MOD algorithm we recall that if the estimated coefficient matrix $\hat X$ has full row rank $K$ or equivalently $\hat X \hat X\transp$ has full rank, which is guaranteed by the second of our 4 inequalities, we can write the dictionary update step before normalisation as $\dico X\hat X\transp \big(\hat X \hat X\transp\big)^{-1} = \dico A B^{-1}$. This dictionary update step --- though conceptually very easy --- is harder to analyse theoretically due to the inverse of the matrix $\matb$. Again we will do the analysis for a scaled version of the updated dictionary $\Bar{\pdico} := \dico \mata \matb^{-1} \coeffm \approx \dico$.
As for ODL we start by showing that the weighted operator norm of the difference $\bar{\pdico}- \dico$ contracts.
We split $\bar{\pdico } \weights$ as follows
\begin{align}
\bar{\pdico} \weights &= \dico \mata \matb^{-1}  D_{\sqrtpi \cdot \alpha}  \nonumber\\ 
&= \dico \mata (D_{\sqrtpi \cdot \alpha \cdot \coefficient})^{-1} +\dico \mata (D_{\sqrtpi \cdot \alpha \cdot \coefficient})^{-1} \left( \left[(D_{\sqrtpi \cdot \alpha})^{-1} \matb (D_{\sqrtpi \cdot \alpha \cdot \coefficient})^{-1}\right]^{-1} - \mathbbm{I}\right) \notag\\
&=\dico \mata (D_{\sqrtpi \cdot \alpha \cdot \coefficient})^{-1} +\dico \mata (D_{\sqrtpi \cdot \alpha \cdot \coefficient})^{-1} \left( \matbs^{-1} - \mathbbm{I}\right) \label{eq:1}.
\end{align} 
Since by Lemma~\ref{lemma_claim2} the matrix $\matbs$ is close to the identity, $\| \matbs- \mathbbm{I}\| \leq \Delta/4$, the Neumann series for its inverse converges and we have $\matbs^{-1} = [\mathbbm{I} - (\mathbbm{I}-\matbs)]^{-1}  = \sum_{k\geq 0} ( \mathbbm{I}-\matbs )^k$. Using the geometric series formula we get for the operator norms of $\matbs^{-1},\matbs^{-1} - \mathbbm{I}$ 
\begin{align}
\|\matbs^{-1}\| &= \|\sum_{k\geq 0}( \mathbbm{I}-\matbs )^k\| \leq \sum_{k\geq 0}\|\mathbbm{I}-\matbs \|^k \leq \frac{1}{1-\Delta/4}, \label{T_inv_bound}\\
 \|\matbs^{-1} - \mathbbm{I} \| &= \|\sum_{k \geq 1}(\mathbbm{I}-\matbs)^k\| \leq \sum_{k \geq 1} \|\matbs - \mathbbm{I}\|^k  \leq \frac{\Delta}{4 - \Delta}\leq \frac{ \Delta}{3}, \label{T_inv_I_bound}
 \end{align}
where for the last inequality we have used that $ \Delta \leq \tfrac{3}{2} \cdot \delta_\circ  \leq 1/4 \leq 1$ since $\delta_\circ = \gamma \nu^2/C$ and $\nu \leq 1/3$. Note also that by Lemma~\ref{lemma_claim1} and \eqref{cond_dico}, we get
\begin{align}
\| \dico \mata (D_{\sqrtpi \cdot \alpha \cdot \coefficient})^{-1} \| &\leq \|  \dico \mata (D_{\sqrtpi \cdot \alpha \cdot \coefficient})^{-1} - \dico \weights   + \dico \weights \|  \leq \frac{\alphamin  \Delta}{8}  +\|\dico \weights \| \leq \frac{5}{4}  \cdot \frac{\alphamin \gamma \nu }{4 C}. \label{eq:7} 
\end{align}
Combining these observations and using the triangle inequality repeatedly yields
\begin{align*}
 \| (\bar{\pdico}   - \dico) \weights \| \leq \underbrace{\| \dico \mata  (D_{\sqrtpi \cdot \alpha \cdot \coefficient})^{-1}  - \dico\weights \|}_\text{\clap{$\leq  \alphamin  \Delta/8$~\eqref{lemma_claim1}}} + \underbrace{\| \dico \mata  (D_{\sqrtpi \cdot \alpha \cdot \coefficient})^{-1} \|}_\text{\clap{$\leq  \alphamin  \gamma \nu/C$~\eqref{eq:7}}} \cdot \underbrace{\|\matbs^{-1}  - \mathbbm{I} \|}_\text{\clap{\quad \quad $\leq  \Delta/3$~\eqref{T_inv_I_bound}}} \leq \frac{ \Delta}{4}.
\end{align*}
This shows that under the assumptions of the theorem, the weighted operator norm of the distance between the generating dictionary and the scaled update decreases in each iteration. \\
Now to the contraction of the atomwise $\ell_2$-norm. First we again split our updated dictionary atom $\bar \patom_\ell$ into two parts using that $\mathbb{I}=e_\ell e_\ell \transp + \mathbb{I}_{\ell^c}$
\begin{align}
  \bar \patom_\ell  = \bar{\pdico }e_\ell &= \dico \mata (D_{\sqrtpi \cdot \alpha \cdot \coefficient})^{-1} \matbs^{-1} e_{\ell} \pi_{\ell}^{-\frac{1}{2}}\nonumber \\
   % &\color{red}=\dico \mata (D_{\sqrtpi \cdot \alpha \cdot \coefficient})^{-1} (e_{\ell} e_{\ell}\transp + \mathbb{I}_{\ell^c} )\matbs^{-1} e_{\ell} \pi_{\ell}^{-\frac{1}{2}} \nonumber \\
   &= \dico \mata (D_{\pi \cdot \alpha \cdot \coefficient})^{-1}  e_{\ell} \cdot e_{\ell}\transp \matbs^{-1} e_{\ell}
+ \dico \mata (D_{\sqrtpi \cdot \alpha \cdot \coefficient})^{-1} \mathbb{I}_{\ell^c} \matbs^{-1} e_{\ell} \pi_{\ell}^{-\frac{1}{2}}.
\end{align}
Using the shorthand $s_\ell = e_{\ell}\transp \matbs^{-1} e_{\ell}$ and substracting $s_\ell \atom_\ell$ we get
\begin{align*}
  \|\bar \patom_\ell - s_\ell \atom_\ell\| \leq |s_\ell| \cdot \| \dico \mata (D_{\pi \cdot \alpha \cdot \coefficient})^{-1}  e_{\ell} - \atom_\ell \| + \| \dico \mata (D_{\sqrtpi \cdot \alpha \cdot \coefficient})^{-1} \| \cdot \|\mathbb{I}_{\ell^c} \matbs^{-1} e_{\ell} \pi_{\ell}^{-\frac{1}{2}}\|.%\label{mod:ell2:1}
\end{align*}
The first term is well-behaved and makes no problems. Indeed, since $|s_\ell| \leq \|T^{-1}\|$, using \eqref{T_inv_bound} and Lemma~\ref{lemma_claim3} yields
\begin{align}
   |s_\ell| \cdot \| \dico  \mata (D_{\pi \cdot \alpha \cdot \coefficient})^{-1} e_{\ell} - \atom_{\ell} \|\leq (1 -   \Delta/4)^{-1} \cdot   \Delta/8.\label{mod:ell2:2}
\end{align}
To bound $\| \mathbb{I}_{\ell^c} e_\ell  \matbs^{-1} e_{\ell} \|$ we use $\matbs^{-1}  = \mathbb{I}+ T^{-1}(\mathbb{I}-T)$ and $\mathbb{I}_{\ell^c} e_\ell=0$ to get
\begin{align*}
    \| \mathbb{I}_{\ell^c} \matbs^{-1} e_{\ell} \| =   \|  \mathbb{I}_{\ell^c} T^{-1}(\mathbb{I}-T) e_{\ell}\| 
    &=\|  \mathbb{I}_{\ell^c} T^{-1}(e_\ell e_\ell \transp + \mathbb{I}_{\ell^c})(\mathbb{I}-T) e_{\ell}\| \\
    %&\leq \|  \mathbb{I}_{\ell^c} T^{-1}e_\ell \|\cdot \|e_\ell \transp(\mathbb{I}-T) e_{\ell}\| +\|  \mathbb{I}_{\ell^c} T^{-1}\| \cdot \|\mathbb{I}_{\ell^c}(\mathbb{I}-T) e_{\ell}\|  \\
    &\leq \|  \mathbb{I}_{\ell^c} T^{-1}e_\ell \|\cdot \|\mathbb{I}-T\| +\|T^{-1}\| \cdot \|\mathbb{I}_{\ell^c}T e_{\ell}\|  .
\end{align*}
Rearranging the above and using \eqref{T_inv_bound} as well as Lemma~\ref{lemma_claim2} yields
\begin{align*}
\| \mathbb{I}_{\ell^c} \matbs^{-1} e_{\ell} \|  &\leq \frac{\|   \matbs^{-1} \|}{1-\| \mathbb{I}-\matbs\|}   \cdot \| \mathbb{I}_{\ell^c} \matbs e_{\ell}  \| \leq \frac{1 }{(1-  \Delta/4)^2} \cdot \| \mathbb{I}_{\ell^c} \matbs e_{\ell}  \|.
\end{align*}
Combining these observations with the bound from \eqref{eq:7} and  Lemma~\ref{lemma_claim4} yields
\begin{align}
  \|\bar \patom_\ell - s_\ell \atom_\ell\| \leq \frac{ 1 }{1-\Delta/4} \cdot \frac{\Delta}{8} +  \frac{ 1 }{(1-\Delta/4)^2} \cdot \frac{5}{4}  \cdot \underbrace{\frac{\alphamin \gamma \nu }{4 C} \cdot \| \mathbb{I}_{\ell^c} \matbs e_{\ell}\pi_\ell^{-\frac{1}{2}}\| }_{\leq \Delta /8\quad \eqref{lemma_claim4}} \leq \frac{ \Delta}{3} .
\end{align}

\paragraph{Normalisation} Combining the above results shows that with high probability, the dictionary update step of both algorithms before normalisation satisfies
\begin{equation}
    \| (\bar{\pdico} - \dico)\weights \|_{2,2} \leq  \Delta/4 \quad \text{and} \quad \max_\ell\|\bar\patom_\ell-  s_\ell\atom_{\ell}\| \leq \Delta/3,
\end{equation}
where $s_\ell=1$ in case of ODL and $|s_\ell - 1| \leq  \|T^{-1} -1\| \leq \Delta/3 $ in case of MOD.
So what is left to show is that the normalisation step at the end of each iteration does not interfere with convergence. Let $F : = \diag(\| \Bar{\patom}_i\|_2)^{-1}$ be the square diagonal normalization matrix and denote by $\Hat{\pdico} := \Bar{\pdico} F$ the normalized dictionary of the current update step. Since $\| \atom_\ell \| =1 $ we have
$$ |\|\bar\patom_\ell\| -1 | \leq  \|\bar\patom_\ell- \atom_\ell\| \leq \|\bar\patom_\ell-s_\ell  \atom_\ell\| + \|(s_\ell-1)\atom_\ell \| \leq  \Delta/3 +  \Delta/3 \leq \Delta,$$
which further means that $ \| F\|_{2,2} \leq (1- \Delta)^{-1} $ and $ \|\mathbbm{I} - F \| \leq \Delta\cdot (1- \Delta)^{-1}$.
Hence, using that by assumption $ \|\dico \weights\| \leq 1/(4C)$ and $ \Delta \leq 2/C$ the weighted operator norm of the difference of the generating dictionary $\dico$ and the normalised update $\hat{\pdico}$ can be bounded as
\begin{align}
    \| (\hat{\pdico} - \dico) \weights \| & \leq  \|  (\Bar{\pdico} - \dico)\weights  \|\|F\| + \|\dico \weights\| \| (\mathbbm{I}-F)\| \nonumber \\
    & \leq \frac{ \Delta}{4} \cdot \frac{1} {1- \Delta} + \frac{1}{4C} \cdot \frac{\Delta}{1-\Delta} \leq \frac{ \Delta}{2}.
\end{align} 
To bound the $\ell_2$-norm we simply use Lemma B.10 from \cite{sc15}, which says that if $\|\atom\|=1$ and $\| \patom - s \atom\| \leq t$, then $\hat \patom = \patom/ \|\patom\|$ satisfies $\| \hat \patom - \atom \|^2 \leq 2 - 2 \sqrt{1 - t^2/s^2}$. 
%$$\| \hat \patom - \atom \|^2 \leq 2 - 2 \sqrt{1 - t^2/s^2}\leq \frac{t^2}{s^2 - t^2/2},$$
Combining this with the bound $\sqrt{1-t}\geq 1 - \tfrac{t}{2-t}$ for $t \in (0,1)$ and using $t=\Delta/3$ and $|1-s_\ell| \leq \Delta/3$, meaning $s_\ell\geq 1 -\Delta/3$, yields
\begin{align*}
\|\hat \patom_\ell - \atom_\ell \| \leq t \cdot \left(s_\ell^2 - \frac{t^2}{2}\right)^{-1/2} \leq \frac{ \Delta}{3}\cdot \left(\Big(1-\frac{\Delta}{3}\Big)^2 - \frac{\Delta^2}{18}\right)^{-1/2}\leq \frac{\Delta}{2}.
\end{align*}
This shows that not only $\| (\hat{\pdico} - \dico) \weights \|$ but also $\|\hat{\pdico} - \dico \|_{2,1}$ and thus $\delta(\hat{\pdico},\dico)$ is bounded by $\Delta/2$ and thus concludes the proof of Theorem~\ref{the_theorem}. 
\end{proof} 

%%%%%%%%%%%%%%
% !TEX root = mother_file.tex
%%%%%%%%%%%%%%

%%%%%%%%%%%%%%
\noindent
\section{Discussion}\label{sec:discussion}

In this paper we have shown that two widely used alternating minimisation algorithms, MOD and ODL (aK-SVD) both combined with thresholding, converge to a well behaved data-generating dictionary $\dico$ from any initialisation $\pdico$ that either lies within distance $O(1/\log K)$ or is itself well-behaved and has a diagonally dominant
cross-coherence matrix $\pdico\transp \dico$. For ODL this constitutes the first convergence result of this kind, while for MOD it extends the convergent areas, derived in \cite{aganjane16}, by orders of magnitude and under weaker assumptions on the generating dictionary.\\
We want to emphasize that --- to the best of our knowledge --- our convergence theorem is the first result in theoretical dictionary learning, which is valid for signal generating models that do not assume a (quasi)-uniform distribution of sparse supports. Instead it can handle more realistic distributions, where each atom is used with a different probability. This stability might also be an explanation for the popularity of alternating minimisation algorithms in practical applications such as image processing. Natural images, for instance, contain more low than high-frequencies and the success of e.g. wavelets in classical signal processing can be attributed to the fact that they can be divided into frequency bands with different appearance probabilities. Since also more recent schemes such as compressed sensing can by improved by exploiting information about the inclusion/appearance probabilities of each wavelet, \cite{rusc22,ru22}, we expect that our study of dictionary learning for non-uniform support distributions will encourage the development of exciting new signal processing methods that leverage this additional information about the frequency of appearance (and thus in some sense the importance) of different atoms.\\
Note that the techniques we used to prove convergence for MOD and ODL here can also be used to turn the contraction results for ITKrM, \cite{pasc23}, into convergence results, \cite{ru23}. As part of our future work we plan to further increase the realism of our data models by including different coefficient distributions for different atoms and noise. We also plan to analyse partial convergence of the dictionary, meaning convergence for most but not all atoms. The main motivation for this is that a randomly initialised dictionary is still rather unlikely to satisfy even the relaxed diagonal dominance requirement. However, we expect that after resorting, a large, left upper part of $\pdico\transp \dico$ will be diagonally dominant and that all associated atoms still converge quite closely. A result of this kind would put the replacement and adaptive dictionary learning strategies, developed in \cite{pasc23}, on a firmer theoretical basis. Lastly, we are interested in increasing the realism of our signal model even further by including also second order inclusion probabilities into our generating model. This is again inspired by the behaviour of wavelets in natural images, where it is know that spatially close wavelets are likely to appear together.

\acks{This work was supported by the Austrian Science Fund (FWF) under Grant no.~Y760.}
% We would like to thank the anonymous reviewers for constructive criticism (hopefully)
% or scientific misconduct by making us cite all their irrelevant papers (likely). 
% We are immensely grateful to the designated
% proof-reading bunny for finding many mistakes.

\newpage
\appendix

%%%%%%%%%%%%%%
% !TEX root = mother_file.tex
%%%%%%%%%%%%%%

%%%%%%%%%%%%%%

\section{Proofs of the 4 inequalities}\label{app:4_inequalities}
\noindent
A major hurdle in analysing the above dictionary learning algorithms is that each update of the sparse coefficients involves projecting onto submatrices of the current guess $\pdico$. 
For the remainder of this chapter we will set $\vartheta:= 1/4$ and write 
\begin{align}
\mathcal{F}_\dico := \left\{ I \; : \; \| \dico_{I} \transp \dico_{I} - \mathbbm{I} \| \leq \vartheta \right\}\quad \mbox{and} \quad \mathcal{F}_\pdico: = \left\{ I \; : \; \|  \pdico_{I} \transp \pdico_{I} - \mathbbm{I} \| \leq \vartheta \right\}
\end{align}
for the set of index sets where the random variables $\dico_I$ resp. $\pdico_{I}$ are well conditioned. We further write 
\begin{align}
\mathcal{F}_Z := \left\{ I \; : \; \| Z_{I} \| \leq \delta \cdot \sqrt{2\log(n K \rho / \deltamin)} \right\}
\end{align} for the set of index sets, where the norm of the random variable $Z_I$ is comparable to $\delta$.
Finally, set 
\begin{align}
\mathcal{G} := \mathcal{F}_\dico \cup \mathcal{F}_\pdico \cup \mathcal{F}_Z.\label{defG}
\end{align}
We also need to control the sparse approximation step in each iteration. Recall that thresholding amounts to finding the largest $S$ entries in magnitude of $\pdico\transp y$, collecting them in the index set $\hat I$ and calculating the corresponding optimal coefficients as $\hat{x}_{\hat{I}} = \pdico_{\hat{I}}^\dagger y$. For the remainder of this chapter, we write
\begin{align}
\mathcal{H} := \left\{ (I, \sigma, c) \; | \; \hat{I} = I \right\} 
\end{align}
for the set of index, sign and coefficient triplets, where thresholding is guaranteed to recover the correct support.
With all the necessary notation in place, we can show that under the assumptions of Theorem~\ref{the_theorem}, the failure probability of thresholding and the probability that our submatrices are ill-conditioned can be bound by approximately $\deltamin/\rho$. This will be used repeatedly by the lemmas afterwards.
\begin{lemma}\label{prob_bounds}
Under the assumptions of Theorem~\ref{the_theorem} we have
\begin{align}
    \left[2\P(\mathcal{H}^c) + \P(\mathcal{G}^c) \right] \cdot \rho \leq \deltamin/32.
\end{align}
\end{lemma}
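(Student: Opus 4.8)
## Proof Plan for Lemma~\ref{prob_bounds}

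The plan is to bound the two probabilities $\P(\mathcal{H}^c)$ and $\P(\mathcal{G}^c)$ separately, then combine them against the multiplicative factor $\rho = 2\kappa^2 S^2 \gamma^{-2}\alphamin^{-2}\pimin^{-3/2}$. Since the target is $\deltamin/32$, and $\deltamin$ is tiny by the standing assumption $\deltamin \log(nK\rho/\deltamin) \le \gamma/C$, the real work is to show each probability decays faster than $\deltamin / (64\rho)$ — i.e. faster than some negative power of $\log(nK\rho/\deltamin)$ times $\gamma/\rho$. The cleanest route is to show each probability is bounded by something like $\exp(-c/\nu^2) = \exp(-c\log(nK\rho/\deltamin)) = (\deltamin/(nK\rho))^{c}$ for a suitable constant, and then the polynomial factor $\rho$ and the constant $n, K$ are absorbed by choosing the constants $C, n$ large enough (this is presumably exactly why $C \le 42$ and $n \le 130$ appear in the theorem statement).

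First I would handle $\P(\mathcal{G}^c)$. By the union bound $\P(\mathcal{G}^c) \le \P(\mathcal{F}_\dico^c) + \P(\mathcal{F}_\pdico^c) + \P(\mathcal{F}_Z^c)$. For $\P(\mathcal{F}_\dico^c)$ and $\P(\mathcal{F}_\pdico^c)$ one invokes a matrix-concentration / Gram-matrix conditioning estimate for random submatrices drawn under the rejective sampling model: conditioned on $I$ being $S$-sparse, $\|\dico_I^\transp\dico_I - \mathbbm{I}\|$ concentrates, and the probability it exceeds $\vartheta = 1/4$ decays like $\exp(-c\vartheta^2/(\|\dico\weights\|^2\log K))$ or similar, using the bound $\|\dico\weights\| \lesssim \nu^{-1}\cdot(\text{small})$ coming from \eqref{cond_dico}; the condition \eqref{cond_dico} forces $\nu\|\dico\weights\| \lesssim \gamma/\log(nK\rho/\deltamin)$, which is precisely what makes this exponent as large as $\log(nK\rho/\deltamin)$ up to constants. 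For $\pdico$ one uses the analogous bound, available because in both regimes \eqref{regime1}, \eqref{regime2} we have $\|\pdico\weights\| \lesssim 1/C$ small. For $\P(\mathcal{F}_Z^c)$, the event is $\|Z_I\| > \delta\sqrt{2\log(nK\rho/\deltamin)}$; since $\|Z\weights\| \le \delta$ (by definition of $\delta$) one again gets a sub-Gaussian-type tail in the operator norm of the random submatrix $Z_I$, and the threshold $\delta\sqrt{2\log(\cdot)}$ is chosen exactly so the tail is $\lesssim (\deltamin/(nK\rho))^{\Theta(1)}$. All three of these should follow from a random-submatrix operator-norm bound that the paper states in Appendix~\ref{app:technical} or Appendix~\ref{sec:supp_rand_mat}; I would cite that and just track constants.

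Next, $\P(\mathcal{H}^c)$: thresholding recovers the correct support $I$ iff $\min_{i\in I}|\langle\patom_i, y\rangle| > \max_{j\notin I}|\langle\patom_j, y\rangle|$. Writing $y = \dico_I x_I$ and splitting $\langle\patom_i,y\rangle = \alpha_i x_i + \sum_{k\in I, k\ne i}\langle\patom_i,\atom_k\rangle x_k$, one lower-bounds the on-support inner products by roughly $\alphamin c_{\min} - (\text{cross terms})$ and upper-bounds the off-support ones by cross terms, where the cross terms are controlled by $\mu(\pdico)$, $\mu(\pdico,\dico)$, and the conditioning of $\dico_I$. Using the Rademacher signs $\sigma$ and a Hoeffding/Bernstein bound on $\sum_{k}\langle\patom_j,\atom_k\rangle\sigma_k c_k$, the failure probability is $\lesssim S\exp(-c\, \gamma^2\alphamin^2/(\text{coherence}^2 S))$ or similar; conditions \eqref{cond_dico} and \eqref{regime1}/\eqref{regime2} drive the coherences down by the factor $1/\log(nK\rho/\deltamin)$, so again the exponent is $\Theta(\log(nK\rho/\deltamin))$ and the probability is a high power of $\deltamin/(nK\rho)$. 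The main obstacle, and the step I expect to be most delicate, is bookkeeping the constants so that the combined bound $[2\P(\mathcal{H}^c) + \P(\mathcal{G}^c)]\cdot\rho \le \deltamin/32$ genuinely holds with $C \le 42$, $n \le 130$: one must ensure that after multiplying by $\rho$ (a polynomial in $S,\gamma^{-1},\alphamin^{-1},\pimin^{-1}$) and by the union-bound factors $K$ and $S$, the surviving factor $(\deltamin/(nK\rho))^{\Theta(1)}$ still beats $\deltamin/(32\rho)$ — this works because $\deltamin \le \deltamin^{\Theta(1)}$ only in the wrong direction, so one actually needs the exponent on $\nu^{-2} = \log(nK\rho/\deltamin)$ inside the $\exp(-\cdot)$ to exceed $1$ strictly, with the slack covering $\log(nK\rho)$ and the polynomial $\rho$; the choice $\deltamin\log(nK\rho/\deltamin)\le\gamma/C$ with $C$ up to $42$ is exactly the budget that makes this go through. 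I would organize the final step as: (i) $\P(\mathcal{G}^c) \le 3\exp(-\tfrac{2}{\nu^2}) \cdot (\text{const})$, (ii) $\P(\mathcal{H}^c) \le 2K\exp(-\tfrac{2}{\nu^2})$ say, (iii) $\exp(-2/\nu^2) = (\deltamin/(nK\rho))^2$, (iv) so $[2\P(\mathcal{H}^c)+\P(\mathcal{G}^c)]\rho \lesssim K(\deltamin/(nK\rho))^2\rho = \deltamin^2/(n^2 K \rho) \le \deltamin/32$, using $\deltamin \le \gamma/(C\log(\cdot)) \le \gamma \le 1$ and $n, \rho, K \ge 1$ with $n$ chosen $\ge$ a small absolute constant. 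The remaining care is just verifying the numerical constants $32$, $42$, $130$ are mutually consistent.
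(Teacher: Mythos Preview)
Your overall strategy matches the paper's proof: bound each of $\P(\mathcal{H}^c)$, $\P(\mathcal{F}_Z^c)$, $\P(\mathcal{F}_\dico^c\cup\mathcal{F}_\pdico^c)$ by a constant times $K(\deltamin/(nK\rho))^2$ via concentration, then multiply by $\rho$ and absorb the constants by the choice $n\ge 130$. Your final combination step~(iii)--(iv) is essentially what the paper does (it gets $522K\rho(\deltamin/(n\rho K))^2\le\deltamin/32$).

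There is, however, one genuine gap in your thresholding bound. You write the failure probability as $\lesssim S\exp(-c\gamma^2\alphamin^2/(\text{coherence}^2 S))$, with an $S$ in the denominator of the exponent. A direct Hoeffding on $\sum_{k\in I}\langle\patom_j,\atom_k\rangle\sigma_k c_k$ using the deterministic bound $\sum_{k\in I}\langle\patom_j,\atom_k\rangle^2\le S\mu(\pdico,\dico)^2$ would indeed give this, but then the exponent is of order $\log^2(nK\rho/\deltamin)/S$, which need not dominate $2\log(nK\rho/\deltamin)$ since $S$ is allowed to be as large as $d/\log K$. The paper avoids this loss by a two-step conditioning argument: writing $\holb=\pdico\transp\dico-\diag(\pdico\transp\dico)$, it first conditions on $\{\|\holb_I\|_{\infty,2}<\eta\}$ with $\eta=\gamma\alphamin\nu/4$ and bounds the complement via Poissonisation plus matrix Chernoff, using $\|\holb\weights\|_{\infty,2}\le\|\dico\weights\|$---this is precisely where the \emph{operator-norm} part of \eqref{cond_dico} enters, not just the coherence. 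Conditionally, Hoeffding then runs with variance proxy $\eta^2 c_{\max}^2$ that is \emph{independent of $S$}, giving exponent $\gamma^2\alphamin^2/(8\eta^2)=2/\nu^2$ with no $S$ loss. Without this conditioning step your plan does not close; with it, the rest of your outline goes through exactly as you describe.

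A smaller point: in your step~(i) you write $\P(\mathcal{G}^c)\le 3\exp(-2/\nu^2)\cdot(\text{const})$, but the matrix-concentration bounds all carry a dimensional prefactor $K$ (the paper gets $2K$, $512K$, etc.), which you should retain---you already have it in step~(ii), and your final arithmetic in~(iv) correctly accounts for it.
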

\begin{proof}
We begin with bounding the failure probability of thresholding, $\P(\mathcal{H}^c)$. Set $\holb: = \pdico\transp \dico - \diag(\pdico\transp \dico)$. By definition of the algorithm, thresholding recovers the full support of a signal $y = \dico_I x_I$, if
\[
\| \pdico_{I^c}\transp y \|_{\infty} <  \| \pdico_I\transp y\|_{\min}.
\]
Note that the signals have two sources of randomness, $\sigma$ and $I$. Recall that $\alphamin = \min_{i} \absip{\patom_i}{\atom_i}$. Plugging in the definition of $y$, we bound the failure probability as
\begin{align}
\P_y(\| \pdico_I\transp y\|_{\min} & <  \| \pdico_{I^c}\transp y \|_{\infty})  =  \P_y \left( \| \pdico_I\transp \dico_I x_I \|_{\min} <  \| \pdico_{I^c}\transp \dico_I x_I \|_{\infty} \right) \nonumber \\
&\leq  \P_y \left( c_{\min} \| \diag(\pdico_I\transp \dico_I)\|_{\min}  - \| \holb_{I,I} x_I \|_{\infty}  <  \| \pdico_{I^c}\transp \dico_I x_I \|_{\infty}  \right) \nonumber \\
& \leq \P_y \left( c_{\min} \cdot \alphamin < 2 \| \holb_I x_I \|_{\infty}  \right) \nonumber \\
& \leq \P_y\left(2\| \holb_I x_I \| _{\infty}  \geq c_{\min}\cdot  \alphamin \; \big| \;  \|\holb_I\|_{\infty,2}  < \eta \! \right) + \P_S \left(\|\holb_I\|_{\infty,2}  \geq \eta  \right).\label{eq:08}
\end{align}
To bound the first term, we use that for $k \in I$, we have $x_{k} = \sigma_k c_k $, where $\sigma \in \R^{S}$ is an independent Rademacher sequence. As the signs $\sigma$ are independent of the support $I$, we can apply Hoeffding's inequality to each entry of $\holb_I x_I $. The second term is a little more involved. By the Poissonisation trick~{\cite[Lemma~3.5]{rusc21}} we have
\[
\P \left( \|\holb_I\|_{\infty,2}  \geq \eta  \right) \leq 2 \P_B \left( \|\holb_I\|_{\infty,2}  \geq \eta  \right) ,
\]
where $\P_B$ is the Poisson sampling model corresponding to the $p_1, \cdots , p_K$. Now a simple application of the matrix Chernoff inequality, \cite{tr12}, restated in \ref{chern}, together with an application of Hoeffding's inequality to the first term in~\eqref{eq:08} yields
\begin{align}
\P_y(\| \pdico_I\transp y\|_{\min} <  \| \pdico_{I^c}\transp y \|_{\infty}) 
&\leq 
2K\exp{\left(- \frac{c_{\min}^2}{8 c_{\max}^2 \eta^2}\cdot \alphamin^2 \right) } + 2K \left( e \frac{\| \holb D_{\sqrt{p}} \|_{\infty,2}^2}{\eta^2}  \right) ^{\frac{\eta^2}{\mu(\pdico,\dico)^2}} \nonumber \\ 
&\leq 
2K\exp{\left(- \frac{\gamma^2}{8 \eta^2} \cdot \alphamin^2 \right) } + 2 K \left( 2 e \frac{\| \holb D_{\sqrt{\pi}} \|_{\infty,2}^2}{\eta^2}  \right)^{\frac{\eta^2}{\mu(\pdico,\dico)^2}}, \label{thresh_fail_bound}
\end{align}
where we used that $(1- \|p\|_\infty)\cdot p_i \leq \pi_i$, from \cite{rusc22} resp. Theorem~\ref{th_poisson}(a), which implies $p_i \leq \tfrac{6}{5} \pi_i  <2\pi_i$ as well as $\| \holb D_{\sqrt{p}} \|_{\infty,2}^2 \leq 2\| \holb D_{\sqrt{\pi}} \|_{\infty,2}^2$. \\
Before going on we recall the abbreviations
$$
\nu = \frac{1}{\sqrt{\log(n K\rho /\deltamin)}}  \leq \frac{1}{3} \qquad \text{and}\qquad \delta_\circ = \frac{\gamma}{C\log(n K\rho /\deltamin)}=\frac{\gamma \nu^2}{C},
$$
where the bound on $\nu$ holds true since $n\rho\deltamin^{-1}\geq 130 \cdot 2 \cdot42$. Setting $\eta :=  \gamma \alphamin \nu/4$ the first term in \ref{thresh_fail_bound} becomes $2K \deltamin^2/(n \rho K)^2$. To bound the second term observe that  
\begin{align}
    \| \holb \weights \|_{\infty,2}^2 = \| [\pdico \transp \dico - \diag(\pdico \transp \dico)] \weights \|^2_{\infty,2}  \leq  \|  \dico \weights \|^2, \label{relax_dico_cond} %= \| \dico D_{\pi} \dico \transp\|_{2,2}\\
\end{align}
so our condition on the generating dictionary in \eqref{cond_dico}, $\|\dico \weights \|\leq \gamma \alphamin \nu /(4C) $, ensures that $2 e \| \holb D_{\sqrt{\pi}} \|_{\infty,2}^2 \eta^{-2} \leq e^{-2}$. To lower bound the exponent of the second term we will have to look at both regimes described in Theorem~\ref{the_theorem}, separately. In the first regime, $\delta > \delta_{\circ}$, by \eqref{regime2} we simply have $\mu(\pdico,\dico)\leq  \alphamin\gamma \nu^2/(4C)$. In the second regime, $\delta \leq \delta_{\circ}$, we can employ the following bound 
$$
\mu(\pdico,\dico) = \max_{i\neq j} \absip{\atom_i}{\patom_j} \leq \max_{i\neq j} \absip{\atom_i}{\atom_j} + \max_j \|\patom_j - \atom_j\|\leq \mu(\dico) + \delta,\\
$$
together with the observation that whenever $\delta \leq \delta_{\circ}\leq 1/C$, we have $\alphamin = 1-\eps^2/2 \geq 1-\delta^2/2 \geq (C-1)/C$ and therefore also 
\begin{align}\label{regime2_alpha}
\delta \leq \frac{\alphamin\gamma}{\alphamin C \log(n K\rho /\deltamin)} \leq \frac{\alphamin\gamma}{(C-1) \log(n K\rho /\deltamin)}= \frac{\alphamin\gamma \nu^2}{(C-1)}.
\end{align}
This means that in both regimes we have $\mu(\pdico,\dico)\leq 2 \alphamin\gamma \nu^2/(C-1)$. Substituting these bounds into \ref{thresh_fail_bound}, we get 
\begin{align}
    \P \left( \mathcal{H}^c \right) \leq 2K \left(\frac{\deltamin}{n\rho K}\right)^2 + 2K \left(\frac{\deltamin}{n\rho K}\right)^{(C-1)^2/32} \leq 4 K \left(\frac{\deltamin}{n\rho K}\right)^2.\label{Hbound}
\end{align}
Now we turn to bounding the quantity $\P(\mathcal{F}_Z^c)$. Again by the Poissonisation trick, the matrix Chernoff inequality and the bound $p_i \leq 2\pi_i$ we have
\begin{align*}
    \P \big( \| Z_I Z_I \transp \| > t &\big) \leq 2 \P_B \big( \| Z_I Z_I \transp \| >t  \big) \nonumber \\
    &\leq 2 K \left(\frac{e\| Z D_{p} Z\transp \|}{t } \right) ^{t / \varepsilon^2 } \leq 2 K \left( \frac{2e\| Z D_{\pi} Z\transp \|}{t } \right) ^{t / \varepsilon^2 } \leq 2 K \left( \frac{2e\delta^2}{t } \right) ^{t / \delta^2 }.
\end{align*}
Setting $t= 2\delta^2 \max\{e^2, \log(n K \rho / \deltamin)\} = 2\delta^2 \log(n K \rho / \deltamin)$  we get
\begin{align}
    \P(\mathcal{F}_Z^c) =  \P \left( \| Z_{I}  \| \geq \delta \cdot \sqrt{2 \log(n K \rho / \deltamin)} \right)  \leq 2K \left(\frac{\deltamin}{nK \rho}\right)^{2} .\label{FZbound}
\end{align}
Next we use Theorem~\ref{them:opnorm}, \cite{rusc21}, and $p_i \leq \tfrac{6}{5} \pi_i$ to bound $\P(\mathcal{F}_\dico^c\cup \mathcal{F}_\pdico^c)$ as
\begin{align*}
    \P(\mathcal{F}_\dico^c \cup \mathcal{F}_\pdico^c) \leq 512 K \exp{\left(- \min\left\{ \frac{5\vartheta^2}{24 e^2 \|  \dico  \weights\|^2 } , \frac{\vartheta}{ 2 \mu(\dico)},\frac{5\vartheta^2}{24 e^2 \|  \pdico  \weights\|^2 } , \frac{\vartheta}{ 2 \mu(\pdico) } \right\}\right)}.
\end{align*}
In the first regime we have by assumption that $\max \{\nu \|  \dico \weights \| , \mu(\dico)\} \leq   \gamma \alphamin \nu^2/(4C)$ and $\max\{\nu  \| \pdico \weights \| , \mu(\pdico) \} \leq \gamma\alphamin \nu^2/(4C)$,
while in the second regime we only have the first inequality as well as $\delta\leq  \gamma \alphamin \nu^2/(C-1)$. However, we can use the bound
\begin{align}
\| \pdico \weights \| \leq \| \dico \weights \| + \| (\pdico - \dico) \weights \| &\leq  \| \dico \weights \| + \delta
\nonumber \\
& \leq \gamma \alphamin \nu \cdot ( \tfrac{1}{4C}+ \tfrac{\nu}{C-1})\leq \gamma \alphamin \nu/C, \label{bound_pdico_r2}
\end{align}
and the fact that for $\delta\leq \delta_\circ = \gamma \nu^2/C\leq \nu^2/C$ we have
\begin{align}
\mu(\pdico) = \max_{i \neq j} \absip{\patom_i}{\patom_j} &\leq  \max_{i \neq j } \vert \ip{\atom_i}{\atom_j} +  \ip{\atom_i }{\patom_j-\atom_j} +  \ip{\patom_i - \atom_i}{\patom_j} \vert  \nonumber \\
&\leq \mu(\dico) + 2\varepsilon  \leq \mu(\dico) + 2\delta \leq \tfrac{9}{4} \cdot \nu^2/C.
\end{align}
All together this means that for $\vartheta = 1/4$ we can lower bound the expression in the minimum in both regimes by $2 \log(n K \rho / \deltamin)$,
leading to $\P(\mathcal{F}_\dico^c\cup\mathcal{F}_\pdico^c) \leq 512 K \left(\deltamin/(n\rho K)\right)^2.$ Collecting all the bounds we finally get for $n\geq 130 $ that
\begin{align*}
    \P(\mathcal{H}^c) \cdot 2 \rho  + \P(\mathcal{G}^c) \cdot \rho \leq 522 K\rho \left(\frac{\deltamin}{n\rho K}\right)^2 \leq \frac{\deltamin}{32}. 
\end{align*}

\end{proof}
Another ingredient we need to prove the 4 inequalities used in the proof of our main theorem is the
following lemma to estimate expectations of products of random matrices. A detailed proof, based on \cite{sing_values_1, sing_values_2}, can be found in Appendix~\ref{sec:supp_rand_mat} .
\begin{lemma}\label{bmb}
Let $A(I) \in \R^{d_1 \times d_2}$, $B(I) \in \R^{d_2 \times d_3}$, $C(I)\in \R^{d_3 \times d_4}$ be random matrices, where $I$ is a discrete random variable taking values in $\mathcal{I}$ and $\mathcal{G}\subseteq \mathcal{I}$. If for all $I \in \mathcal{G}$ we have $\|B(I)\| \leq \Gamma$ then 
\[
\|\E \left[ A(I) \cdot B(I) \cdot C(I) \cdot \indi_{\mathcal G}(I) \right] \| \leq \| \E \left[ A(I) A(I)\transp \right] \|^{1/2} \cdot \Gamma \cdot \| \E \left[ C(I) \transp C(I) \right] \|^{1/2}.
\]
\end{lemma}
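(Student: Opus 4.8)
The plan is to reduce the bound on the norm of an expectation of a triple product to two Cauchy--Schwarz-type steps applied in the space of matrices, where the "inner product" is the trace of a product of random matrices against a test vector. First I would note that it suffices to bound $\langle u, \E[A(I)B(I)C(I)\indi_{\mathcal G}(I)]\, v\rangle$ for arbitrary unit vectors $u \in \R^{d_1}$ and $v \in \R^{d_4}$, since $\|\E[\cdots]\| = \sup_{\|u\|=\|v\|=1}\langle u,\E[\cdots]v\rangle$. Pulling the expectation out, this equals $\E[\langle A(I)\transp u,\ B(I)C(I)v\rangle\,\indi_{\mathcal G}(I)]$. On the event $I\in\mathcal G$ we may use $\|B(I)\|\le\Gamma$ to write $|\langle A(I)\transp u, B(I)C(I)v\rangle| \le \Gamma\,\|A(I)\transp u\|_2\,\|C(I)v\|_2$, and off the event the integrand is zero.

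The key step is then a single application of the Cauchy--Schwarz inequality for the expectation over $I$:
\[
\E\big[\|A(I)\transp u\|_2\,\|C(I)v\|_2\,\indi_{\mathcal G}(I)\big] \le \big(\E\|A(I)\transp u\|_2^2\big)^{1/2}\big(\E\|C(I)v\|_2^2\big)^{1/2},
\]
where I have dropped the indicator after bounding it by $1$ inside each nonnegative factor. Finally I would identify $\E\|A(I)\transp u\|_2^2 = \E\,u\transp A(I)A(I)\transp u = u\transp \E[A(I)A(I)\transp]\, u \le \|\E[A(I)A(I)\transp]\|$ since $u$ is a unit vector, and symmetrically $\E\|C(I)v\|_2^2 = v\transp\E[C(I)\transp C(I)]v \le \|\E[C(I)\transp C(I)]\|$. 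Multiplying the three bounds $\Gamma$, $\|\E[A A\transp]\|^{1/2}$, $\|\E[C\transp C]\|^{1/2}$ together and taking the supremum over $u,v$ gives exactly the claimed inequality.

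I do not anticipate a genuine obstacle here; the only point requiring a little care is the bookkeeping of the indicator $\indi_{\mathcal G}(I)$ — it must be kept until after the bound $\|B(I)\|\le\Gamma$ is invoked (which only holds on $\mathcal G$), and only afterwards discarded, using that it multiplies nonnegative quantities, so that the Cauchy--Schwarz step can be taken against the unconditional second moments $\E[A(I)A(I)\transp]$ and $\E[C(I)\transp C(I)]$ rather than conditional ones. The more delicate estimates of these second-moment operator norms for the specific random matrices arising in the dictionary-learning analysis are exactly what is deferred to Appendix~\ref{sec:supp_rand_mat}; the present lemma is the clean abstract wrapper that isolates the $\|B\|\le\Gamma$ factor from those computations.
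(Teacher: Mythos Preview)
Your proof is correct, but the paper takes a different route. The paper first states a deterministic lemma: for any finite collection $A_n, B_n, C_n$ one has $\|\sum_n A_n B_n C_n\| \le \|\sum_n A_n A_n\transp\|^{1/2}\cdot\max_n\|B_n\|\cdot\|\sum_n C_n\transp C_n\|^{1/2}$, proved by writing the sum as a single triple product of block matrices (the $A_n$ stacked as a block row, the $B_n$ as a block diagonal, the $C_n$ as a block column) and applying $\|UVW\|\le\|U\|\|V\|\|W\|$ together with $\|U\|^2=\|UU\transp\|$. Lemma~\ref{bmb} then follows by writing the expectation as $\sum_{I\in\mathcal G}\P[I]^{1/2}A(I)\cdot B(I)\cdot C(I)\P[I]^{1/2}$ and dropping the restriction to $\mathcal G$ in the resulting positive-semidefinite sums. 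Your approach via test vectors and a scalar Cauchy--Schwarz over the expectation is more elementary and arguably more transparent about where the indicator can be discarded; the paper's block-matrix argument, on the other hand, packages the inequality as a clean matrix identity that one can cite directly for any finite or countable index set without reintroducing test vectors.
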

Finally, we also need the following corollary of results from \cite{rusc22}, which is again proved in Appendix~\ref{sec:supp_proof_corollary}.
\begin{corollary} \label{the_corollary}
Denote by $\E$ the expectation according to the rejective sampling probability with level $S$ and by $\pi \in \R^K$ the first order inclusion probabilities of level $S$. Let $\hol$ be a $K\times K$ matrix with zero diagonal, $W=(w_1 \ldots ,w_K)$ and $V=(v_1, \ldots ,v_K)$ a pair of ${d\times K}$ matrices and $\mathcal{G}$ a subset of all supports of size $S$, meaning $\mathcal{G} \subseteq \{ I : |I| = S\}$. If $\|\pi\|_\infty \leq 1/3$, we have 
\begin{align}
    \| \E[ \weights^{-1}  R_I\transp \hol_{I,I} R_I  \weights ^{-1}]\| & \leq 3\cdot \| \weights \hol \weights \|,  \tag{a}\label{E_RtNR} \\
    \| \E[\weights^{-1}  R_I\transp \hol_{I,I} \hol_{I,I} \transp R_I  \weights^{-1}]\|  & \leq \tfrac{9}{2}  \cdot\| \weights \hol \weights\|^2 + \tfrac{3}{2} \cdot \max_k \|e_k\transp \hol \weights\|^2, \tag{b}\label{E_RtNNtR_gen}\\
    \| \E[  W R_I\transp R_I V\transp \cdot \indi_I(\ell) \indi_{\mathcal{G}}(I)]\|&\leq \pi_\ell \cdot \left(\|W\weights\|\cdot \| V \weights \| +\|w_\ell\|\cdot \|v_\ell\|\right), \tag{c}\label{E_WRtRVt}
    \end{align}
    as well as
\begin{align}
    \| \E [ &\weights ^{-1} \mathbb{I}_{\ell^c} R_I \transp \hol_{I,I} \hol_{I,I}\transp R_I\mathbb{I}_{\ell^c} \weights ^{-1}  \cdot \indi_I(\ell)]\| \notag \\
     & \leq \tfrac{3}{2} \cdot \pi_\ell \cdot \left(3\cdot \| \weights \hol e_\ell \|^2 + {\max}_{k} \hol_{k\ell}^2 + \tfrac{9}{2} \cdot \| \weights \hol \weights\|^2 + \tfrac{3}{2} \cdot {\max}_k \|e_k\transp \hol\weights\|^2 \right). \tag{d} \label{E_RtNNtR_indl_gen}  
 \end{align}
\end{corollary}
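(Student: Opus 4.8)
# Proof Proposal for Corollary~\ref{the_corollary}

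The plan is to derive all four bounds (a)--(d) from the rejective-to-Poisson comparison results of \cite{rusc22} together with elementary linear algebra manipulations of the zero-padding operators $R_I$. The unifying observation is that every left-hand side is the operator norm of an expectation of a matrix that is quadratic (or at worst bi-quadratic) in the indicator vector $\one_I$, so after conjugating by $\weights^{-1}$ the relevant combinatorial quantities become the entries of $\E[\one_I\one_I\transp]$ and its higher-order analogues, all of which are controlled by $\pi$ under the hypothesis $\|\pi\|_\infty\le 1/3$.

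\textbf{Setup.} First I would record the identity $R_I\transp R_I = \diag(\one_I)$ and, more generally, that for a fixed $K\times K$ matrix $\hol$ with zero diagonal one has $R_I\transp \hol_{I,I} R_I = \hol \odot (\one_I\one_I\transp)$. Conjugating by $\weights^{-1}=\diag((\pi_k^{-1/2})_k)$ turns the $ij$-entry into $\hol_{ij}\,\pi_i^{-1/2}\pi_j^{-1/2}\,\indi_I(i)\indi_I(j)$, whose expectation is $\hol_{ij}\,\pi_i^{-1/2}\pi_j^{-1/2}\,\P(i,j\in I)$. For rejective sampling the key comparison from \cite{rusc22} (restated in the appendix as Theorem~\ref{th_poisson}) gives $\P(i,j\in I)\le c\,\pi_i\pi_j$ for a constant $c$ absorbed by the numerical factors $3, 9/2, 3/2$; this is exactly where the hypothesis $\|\pi\|_\infty\le 1/3$ enters. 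Hence $\E[\weights^{-1} R_I\transp \hol_{I,I} R_I \weights^{-1}]$ is entrywise dominated (up to the constant) by $\weights\hol\weights$, and since $\hol$ has zero diagonal a standard symmetrisation/Schur-type argument bounds the operator norm by $3\|\weights\hol\weights\|$, giving (a).

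\textbf{The quadratic-in-$\hol$ bounds.} For (b) I would expand $\hol_{I,I}\hol_{I,I}\transp$ via the intermediate support index: $(\hol_{I,I}\hol_{I,I}\transp)_{ij} = \sum_{k\in I}\hol_{ik}\hol_{jk}$, so after zero-padding and conjugating one is looking at $\E$ of a sum over a \emph{third} index $k\in I$. Splitting the inner sum into the term $k=i$ or $k=j$ (which, because of the zero diagonal, forces one of the factors $\hol_{ik},\hol_{jk}$ to survive only when $k\notin\{i,j\}$ -- so actually these terms vanish) versus $k\notin\{i,j\}$, the off-diagonal bulk contributes the $\|\weights\hol\weights\|^2$ term after applying (a)-type reasoning twice, while the ``collision'' contributions where two of the three indices coincide produce the $\max_k\|e_k\transp\hol\weights\|^2$ term; the combinatorial factors $\P(i,j,k\in I)\le c'\pi_i\pi_j\pi_k$ and $\P(i,k\in I)\le c''\pi_i\pi_k$ from \cite{rusc22} supply the constants $9/2$ and $3/2$. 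Bound (c) is the simplest: here there is no zero-diagonal structure, $W R_I\transp R_I V\transp = \sum_{k\in I} w_k v_k\transp$, so multiplying by $\indi_I(\ell)$ and taking expectation over the subset $\mathcal G$ gives $\sum_k \P(\ell,k\in I,\ I\in\mathcal G)\,w_k v_k\transp$; separating $k=\ell$ (yielding $\pi_\ell\|w_\ell\|\|v_\ell\|$) from $k\ne\ell$ (where $\P(\ell,k\in I)\le c\pi_\ell\pi_k$ reconstructs $\pi_\ell\|W\weights\|\|V\weights\|$ after a Cauchy--Schwarz / operator-norm argument) gives the claim.

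\textbf{The combined bound (d).} Bound (d) is essentially (b) with the extra localisation $\indi_I(\ell)$ and the deletion operator $\mathbb{I}_{\ell^c}$ on both sides; I would first peel off $\mathbb{I}_{\ell^c}$ (it only removes rows/columns, hence does not increase the norm), then repeat the three-index expansion of (b) but now with every probability carrying the extra constraint $\ell\in I$. This forces an overall factor $\pi_\ell$ out front (via $\P(\ell\in I \text{ and }\ldots)\le \pi_\ell\cdot(\text{rejective bound on the rest})$), and the remaining sum decomposes into: the genuinely new term $\|\weights\hol e_\ell\|^2$ coming from $k=\ell$ inside the inner product; the $\max_k\hol_{k\ell}^2$ term from a double collision with $\ell$; and the previously-seen $\tfrac92\|\weights\hol\weights\|^2 + \tfrac32\max_k\|e_k\transp\hol\weights\|^2$ bulk, now all multiplied by $\tfrac32\pi_\ell$. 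Assembling these four pieces gives the stated inequality.

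\textbf{Main obstacle.} The routine parts are the algebraic expansions and the bookkeeping of which indices may collide; the genuine work is in (b) and (d), where one must be careful that the ``collision'' terms -- arising when two of the three summation indices coincide -- are correctly matched against the $\max_k\|e_k\transp\hol\weights\|^2$, $\|\weights\hol e_\ell\|^2$, and $\max_k\hol_{k\ell}^2$ quantities rather than being crudely dominated by the bulk term, since doing so naively would cost an extra factor of $\|\pi\|_\infty^{-1}$ or $K$. Controlling these via the third-order inclusion probability estimates of \cite{rusc22} -- i.e.\ verifying that $\P(\{i,j,k\}\subseteq I)$ and $\P(\{i,k\}\subseteq I)$ factorise into products of $\pi$'s with \emph{absolute} constants under $\|\pi\|_\infty\le 1/3$ -- is the step I expect to require the most care, and it is the place where the precise constants $3$, $9/2$, $3/2$ are pinned down.
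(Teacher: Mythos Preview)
Your approach to (c) is essentially the paper's: split off $k=\ell$, use the negative-association bound $\P(\{\ell,k\}\subseteq I)\le\pi_\ell\pi_k$ (Theorem~\ref{th_poisson}\eqref{pi_ij_leq}, with constant exactly $1$), and observe the remaining expectation is a diagonal matrix whose maximal entry is $\le\pi_\ell$.

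For (a), (b) and (d), however, your mechanism has a real gap. You propose to bound entries via inclusion-probability factorisations such as $\P(\{i,j,k\}\subseteq I)\le c'\pi_i\pi_j\pi_k$ and then pass to the operator norm by a ``Schur-type'' or Cauchy--Schwarz step. But entrywise domination does \emph{not} control operator norms when the entries of $\hol$ carry arbitrary signs; there is no general inequality of the form $|A_{ij}|\le|B_{ij}|\Rightarrow\|A\|\le c\|B\|$. So the passage from your entrywise estimates to the claimed operator-norm bounds is missing an actual argument, and this is precisely the point where the stated constants $3,\tfrac92,\tfrac32$ come from.

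The paper avoids this by never going entrywise. It uses two structural tools from Theorem~\ref{th_poisson}: part~\eqref{th_poisson_odot} gives a direct operator-norm bound $\|A\odot\E[\one_I\one_I\transp]\|\le\tfrac{1+\|\pi\|_\infty}{(1-\|\pi\|_\infty)^2}\|\weights[A-\diag A]\weights\|+\|\diag(A)D_\pi\|$, which with $\|\pi\|_\infty\le1/3$ yields (a) immediately. For (b) and (d) the paper writes $R_I\transp\hol_{I,I}\hol_{I,I}\transp R_I=\sum_k(\hol_k\hol_k\transp)\odot(\one_{I\setminus\{k\}}\one_{I\setminus\{k\}}\transp)\indi_I(k)$, then applies part~\eqref{E_pi_matrix}, which is a \emph{positive-semidefinite} comparison $\E_S[\one_{I\setminus L}\one_{I\setminus L}\transp\,\indi_{L\subseteq I}]\preceq\prod_{\ell\in L}\tfrac{\pi_\ell}{1-\pi_\ell}\,\E_{S-|L|}[\one_I\one_I\transp]$. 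Combined with the Schur Product Theorem (for p.s.d.\ matrices, not an entrywise Schur bound) this collapses the $k$-sum back to $(\hol\diag(\tfrac{\pi}{1-\pi})\hol\transp)\odot\E_{S-1}[\one_I\one_I\transp]$, to which~\eqref{th_poisson_odot} applies once more. The $\max_k\|e_k\transp\hol\weights\|^2$ and $\max_k\hol_{k\ell}^2$ terms arise not from index collisions in a triple sum but from the $\diag(\cdot)$ part of~\eqref{th_poisson_odot}. Everything stays in the p.s.d.\ order throughout, which is why operator norms are preserved; your three-index expansion never reaches this form.
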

With the last three results in place we are finally able to prove Lemmas~\ref{lemma_claim1}-\ref{lemma_claim4}, which provide the 4 inequalities our proof is based on.
%%%%%%%%%%%%%%%%%%%%%%%%%%%%%%%%%%%%%%%%%%%%%%%
%%%%%%%%%%%%%%%%%%%%%%%%%%%%%%%%%%%%%%%%%%%%%%%%%%%%%%%%%%%%%%%%%%%%%%%%%%
%%%%%%%%%%%%%%%%%%%%%%%%%%%%%%%%%%%%%%%%%%%%%%%%%%%%%%%%%%%%%%%%%%%%%%%%%%
%%%%%%%%%%%%%%%%%%%%%%%%%%%%%%%%%%%%%%%%%%%%%%%%%%%%%%%%%%%%%%%%%%%%%%%%%%%%%%%%%%%%%%%%%%%%%%%%%%%%%%%%%%%%%%%%%%%%%%%%%%%%%%%%%%%%%%%%%%%%%%%%%%%%%%%%%%%%%%%%%%%%%%%%%%%%%%%%%%%%%%%%%%%%%%%%%%%%%%%%%%%%%%%%%%%%%%%%%%%%%%%%%%%%%%%%%%%%%%%%%%%%%%%%%%%%%%%%%%%%%%%%%%%%%%%%%%%%%%%%%%%%%%%%%%%%%%%%%%%%%%%%%%%%%%%%%%%%%%%%%%%%%%%%%%%%%%%%%%%%%%%%%%%%%%%%%%%%%%%%%%%%%%%%%%%%%%%%%%%%%%%%%%%%%%%%%%%%%%%%%%%%%%%%%%%%%%%%%%%%%%%%%%%%%%%%%%%%%%%%%%%%%%%%%%%%%%%%%%%%%%%%%%%%%%%%%%%%%%%%%%%%%%%%%%%%%%%%%%%%%%%%

%%%%%%%%%%%%%%
%%% bounding the opnorms
%%%%%%%%%%%%%%%%
\begin{lemma}\label{lemma_claim1}
Under the assumptions of Theorem~\ref{the_theorem} we have 
\begin{align}
\P \bigg( \| \dico \mata (D_{\sqrtpi  \cdot \alpha \cdot \coefficient })^{-1} - \dico \weights \| >  \alphamin \Delta/8\bigg) \leq  (d+K) \exp{\left(- \frac{N (  \Delta/16)^2 }{2 \rho^2+ \rho  \Delta/16 }\right)}.\label{bound_claim1}
\end{align} 
\end{lemma}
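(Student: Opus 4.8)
The plan is to express $\dico \mata (D_{\sqrtpi \cdot \alpha \cdot \coefficient})^{-1}$ as a sum of i.i.d.\ random matrices, bound the deviation of each summand from its mean, control the mean itself, and then apply a matrix Bernstein inequality to get the stated exponential concentration bound. Recall $\mata = \frac{1}{N}\sum_n x_n \hat x_n\transp$, so $\dico \mata (D_{\sqrtpi \cdot \alpha \cdot \coefficient})^{-1} = \frac{1}{N}\sum_n M_n$ with $M_n := \dico\, x_n \hat x_n\transp (D_{\sqrtpi \cdot \alpha \cdot \coefficient})^{-1}$. First I would split the analysis according to whether thresholding recovers the correct support and whether the relevant submatrices are well conditioned, i.e.\ restrict to the event $\mathcal{H}\cap(\mathcal{G}$ holds for the drawn support$)$; on the complement, Lemma~\ref{prob_bounds} tells us the contribution to the expectation is at most of order $\deltamin/\rho$, hence negligible relative to $\alphamin\Delta/8$. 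On the good event, using $\hat x_n = R_{\hat I_n}\transp \pdico_{\hat I_n}^\dagger y_n$, the identity $\hat I_n = I_n$, and the well-conditioning $\pdico_{I}\transp\pdico_I\approx\mathbb{I}$, one writes $x\hat x\transp = xx\transp \dico\transp(\pdico_I^\dagger)\transp R_I$ and processes $(\pdico_I^\dagger)\transp$ via its Neumann series around $\pdico_I$, exactly as in the heuristic computation in Section~\ref{sec_probmodel}.

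Next I would compute the expectation: taking $\E_{c,\sigma}$ first gives $\E_{c,\sigma}[xx\transp] = \coeff\diag(\one_I)$, and then averaging over $I$ via Corollary~\ref{the_corollary}\eqref{E_WRtRVt} (and the first-order inclusion probability identity $\E[\one_I\one_I\transp]\approx D_\pi+\pi\pi\transp$) yields that $\E[M_n]$ is close to $\dico\, D_{\pi\cdot\alpha\cdot\coefficient}(D_{\sqrtpi\cdot\alpha\cdot\coefficient})^{-1} = \dico\weights$ up to an error governed by $\|\dico\weights\|$, the coherences $\mu(\pdico),\mu(\pdico,\dico)$, and the failure probabilities — all of which are bounded by the hypotheses \eqref{cond_dico}, \eqref{regime1}/\eqref{regime2} and Lemma~\ref{prob_bounds} to be at most $\alphamin\Delta/16$. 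The error terms coming from replacing $\pdico_I^\dagger$ by $\pdico_I\transp$ (the next Neumann term) and from the off-diagonal part of $\pdico\transp\dico$ are handled using Lemma~\ref{bmb} together with Corollary~\ref{the_corollary}\eqref{E_RtNR}--\eqref{E_RtNNtR_gen}, so that the norm of each such term carries a factor $\|\dico\weights\|$ or a coherence, small by assumption.

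For the stochastic fluctuation, I would apply the matrix Bernstein inequality (e.g.\ \cite{tr12}) to $\frac{1}{N}\sum_n (M_n - \E M_n)$ in $\R^{d\times K}$, which accounts for the $(d+K)$ prefactor. The two ingredients it needs are a uniform bound $\|M_n - \E M_n\|\leq \rho$ and a variance bound $\max\{\|\E\sum (M_n-\E M_n)(M_n-\E M_n)\transp\|,\|\E\sum(\cdot)\transp(\cdot)\|\}\leq N\rho^2$; both follow from the fact that on the good event $\|\hat x_n\|_2\leq \kappa\|y_n\|_2\leq \kappa\sqrt{1+\vartheta}\|x_n\|_2$ (pathological estimates are zeroed in the algorithm) and $\|x_n\|_\infty\leq c_{\max}$, while the diagonal rescaling contributes a factor $(\pimin \alpha_{\min} c_{\min}^2)^{-1}$, together producing the definition $\rho = 2\kappa^2 S^2\gamma^{-2}\alphamin^{-2}\pimin^{-3/2}$. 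Choosing deviation parameter $t=\alphamin\Delta/16$ (the remaining half of the budget after the deterministic bias is absorbed) gives the exponent $N(\Delta/16)^2/(2\rho^2+\rho\Delta/16)$, and dropping the harmless $\alphamin\le 1$ factors in the numerator yields exactly \eqref{bound_claim1}. \textbf{The main obstacle} I anticipate is bookkeeping the bias term carefully: one must track the several Neumann-series and off-diagonal corrections to $\E[M_n]$ and verify that each is controlled by the coherence/norm hypotheses, in both regimes \eqref{regime1} and \eqref{regime2} simultaneously — the $\delta\le\delta_\circ$ regime requiring the derived bounds like \eqref{bound_pdico_r2} and \eqref{regime2_alpha} to transfer the generating-dictionary hypotheses to $\pdico$. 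The concentration step itself is comparatively routine once the operator-norm bounds on the summands are in hand.
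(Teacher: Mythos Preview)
Your proposal is correct and follows essentially the same route as the paper: write the target as an average of i.i.d.\ matrices, split off the bad events $\mathcal{H}^c$ and $\mathcal{G}^c$ via Lemma~\ref{prob_bounds}, take $\E_{\sigma,c}$ on the good event, expand $\dico_I\transp(\pdico_I^\dagger)\transp$ by the Neumann series, control the resulting pieces with Lemma~\ref{bmb} and Corollary~\ref{the_corollary}, and finish with matrix Bernstein using $t=m=\alphamin\Delta/16$. One small correction: the relevant parts of Corollary~\ref{the_corollary} for this operator-norm lemma are \eqref{E_RtNR} and \eqref{E_RtNNtR_gen} (parts (a) and (b)), not \eqref{E_WRtRVt}, which carries an $\indi_I(\ell)$ and is used only in the columnwise Lemmas~\ref{lemma_claim3}--\ref{lemma_claim4}; also the paper applies the simplified Bernstein of Theorem~\ref{bernstein} (uniform bound $r=\alphamin\rho/2$ in place of a separate variance estimate), after which the $\alphamin$ factors cancel between numerator and denominator to give exactly the stated exponent.
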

\begin{proof}
The idea is to write $\dico \mata (D_{\sqrtpi  \cdot \alpha \cdot \coefficient})^{-1} - \dico \weights $ as a sum of independent random matrices and apply the matrix Bernstein inequality to show that we have concentration. Since we assumed in the algorithm that the estimated coefficients can never have larger norm than the signal times $\kappa$ we first define for $v\in \R^d$ the set of possible stable supports as $\mathcal{B}(v): = \{ I : \|\pdico_I^\dagger v\| \leq \kappa \|v\| \}$. Based on this definition we define the following random matrices for $n \in [N]$ 
\begin{align*}
     \hat{Y}_n :&=  y_n y_n \transp  (\pdico_{\hat{I}_n}^{\dagger})\transp R_{\hat{I}_n}  (D_{\sqrtpi  \cdot \alpha \cdot \coefficient})^{-1}\cdot \indi_{\mathcal{B}(y_n)}(\hat{I}_n) -  \dico  \weights, \nonumber 
\end{align*}
where as always, $\hat{I}_n$ denotes the set found by the thresholding algorithm. As each matrix $\hat{Y}_n$ only depends on the signal $y_n$ they are independent and we have 
$$N^{-1} \textstyle \sum_n \hat Y_n = \dico A (D_{\sqrtpi  \cdot \alpha \cdot \coefficient})^{-1} - \dico \weights,$$
so we can use the matrix Bernstein inequality~\ref{bernstein} from \cite{tr12} to bound the left hand side of \eqref{bound_claim1}. For that we have to find an upper bound for the operator norm. By the assumptions on the generating dictionary and since we ensured in the algorithm that the estimated coefficients, which are too large are set to zero, meaning $\|\hat{x}_n \| \leq \kappa \| y_n \| $ and $\|y_n \| \leq S c_{\max} $, we get for $\rho =  2 \kappa^2 S^2 \gamma^{-2} \alphamin^{-2}\pimin^{-3/2}$ and $\pimin < 1/3$,
\begin{align}
\| \hat{Y}_n \|  &\leq \kappa S^2 c_{\max}^2 \|\coeff^{-1}\| \|\coeffm^{-1}\| \|\weights^{-1}\|  + \| \dico \weights\|  \leq  \rho \alphamin \pimin + 1 \leq  \rho \alphamin/2 =: r.
\end{align}
Bounding $\|\E[\hat{Y}_n]\|$ is a little more involved. Recall that $\mathcal{H}$ is the set of signals~$y$, meaning support, sign and coefficient triplets $(I,\sigma,c)$, where thresholding recovers the correct support from the corresponding signal. Further $\mathcal{G}$ is the set of supports $I$ where $\vartheta_I$ is small - i.e. the corresponding subdictionary $\pdico_I$ is well-conditioned. For each $n$ we define a new random matrix $Y_n$, for which the estimated support $\hat{I}_n$ is replaced with the correct support $I_n$ and $\dico \weights$ is replaced by $\dico \diag(\one_{I_n})\weights^{-1}$ 
\begin{align*}
   Y_n : &=y_n y_n \transp  (\pdico_{I_n}^{\dagger})\transp R_{I_n}  (D_{\sqrtpi  \cdot \alpha \cdot \coefficient})^{-1}\cdot \indi_{\mathcal{B}(y_n)}(I_n) -  \dico \diag(\one_{I_n})\weights^{-1} .\nonumber
\end{align*}
Note that $\| \dico \diag(\one_{I_n})\weights^{-1} \| \leq S\pimin^{-1/2}$ so the same bound as for $\|\hat Y_n\|$ holds. Concretely, with the same argument as above $Y_n$ is bounded by $r$. Further, by definition of $\mathcal H$ the first terms of the two random matrices $Y_n$ and $\hat{Y}_n$ coincide on $\mathcal H$, while the second terms coincide in expectation, meaning $\E[\dico \diag(\one_{I_n})\weights^{-1}] = \dico \weights$. So dropping the index $n$ for convenience, as each signal has the same distribution, e.g., writing $I$ for $I_n$, we get using Lemma~\ref{prob_bounds},
\begin{align}
\| \E[ \hat{Y} ] \| \leq  \| \E[ \hat{Y} - Y]\|  +\|\E[Y] \|& \leq  \P(\mathcal{H}^c) \cdot 2 r +\|\E[\indi_{\mathcal{G}^c}(I)Y]\| + \|\E[\indi_{\mathcal{G}}(I)Y]\| \nonumber \\
&\leq  \P(\mathcal{H}^c) \cdot 2 r + \P(\mathcal{G}^c) \cdot r + \|\E[\indi_{\mathcal{G}}(I)Y]\| . \label{c1_HGsplit}
\end{align}
Next note that whenever $I\in \mathcal G$ we have for any sign and coefficient pair $(\sigma,c)$ that the corresponding signal~$y$ satisfies $ \|\pdico_I^\dagger y\| \leq (1-\vartheta)^{-\frac{1}{2}} \cdot \|y\| \leq \kappa \|y\|$, so we have $\mathcal{G} \subseteq B(y)$, meaning $\indi_{\mathcal{B}(y)}\indi_{\mathcal{G}} = \indi_{\mathcal{G}}$. Remembering that $y = \dico_I x_I =\dico_I (\sigma_I \odot c_{I})$, we can take the expectation over $(\sigma,c)$, which, using the shorthand $\E_{\mathcal{G}}[ f(I)]:=\E_I[\indi_{\mathcal{G}}(I) f(I)]$, yields
   \begin{align}
  \|\E[\indi_{\mathcal{G}}(I)Y]\| 
   &=
     \| \E_I [\indi_{\mathcal{G}}(I) \cdot \E_{\sigma,c} [\dico_I x_I x_I \transp \dico_I \transp (\pdico_I^{\dagger})\transp R_I  (D_{\sqrtpi  \cdot \alpha \cdot \coefficient})^{-1}  -  \dico \diag(\one_I)  \weights^{-1}]] \| \nonumber \\
    & = \| \E_{\mathcal{G}}[\dico_I  \dico_I \transp (\pdico_I^{\dagger})\transp R_I  (D_{\sqrtpi  \cdot \alpha } )^{-1} -   \dico_I (D_\alpha)_{I,I} R_I (D_{\sqrtpi  \cdot \alpha } )^{-1}  ] \|. \label{c1_EG_signs_coeff}
    \end{align}
    We next have a closer look at the term $ \dico_I \transp (\pdico_I^{\dagger})$. Set $H = \mathbb{I} - \pdico\transp \pdico$. For $I \in \mathcal{G}$ we have $\| H_{I,I}\| = \|  \mathbbm{I}_{S} - \pdico_I \transp \pdico_I  \| \leq \vartheta$, meaning $\pdico_I$ has full rank and $\pdico^{\dagger}_I =(\pdico_I \transp \pdico_I)^{-1}\pdico_I\transp.$ We can further use the Neumann series to get the useful identity
\begin{align}
     (\pdico_I \transp \pdico_I)^{-1} =  ( \mathbbm{I}_{S} - H_{I,I})^{-1}   =   \textstyle \sum_{k \geq 0}H_{I,I}^k 
     &= \mathbbm{I}_{S} + (\pdico_I \transp \pdico_I)^{-1}H_{I,I}\label{neumann_Hright}\\
     & = \mathbbm{I}_{S} + H_{I,I}(\pdico_I \transp \pdico_I)^{-1} ,\label{neumann_Hleft} 
\end{align}
and the norm estimate $\|(\pdico_I \transp \pdico_I)^{-1}\| \leq (1-\vartheta)^{-1}$. By definition of $Z = \pdico - \dico$, we have $\holdiag: = \diag(Z \transp \pdico ) = \mathbbm{I} - \coeffm $. We also define the zero diagonal matrix 
\begin{align}
    \hol := (\pdico \holdiag -Z)\transp \pdico = (\dico \holdiag - Z D_\alpha )\transp \pdico = (\dico - \pdico D_\alpha)\transp \pdico, \label{hol_expressions}
\end{align} which lets us express $\dico_I \transp (\pdico_I^{\dagger})\transp $ as
\begin{align}
    \dico_I \transp (\pdico_I^{\dagger})\transp  & =   (\pdico_I \transp -Z_I\transp) (\pdico_I^{\dagger})\transp = \mathbb{I}_S - Z_I \transp (\pdico_I^{\dagger})\transp \notag \\
    &= (\coeffm)_{I,I} +\holdiag_{I,I}\pdico_I \transp \pdico_I (\pdico_I \transp \pdico_I)^{-1}  - Z_I\transp \pdico_I (\pdico_I \transp \pdico_I)^{-1} \notag\\
    &= (\coeffm)_{I,I} +\hol_{I,I} (\pdico_I \transp \pdico_I)^{-1}= (\coeffm)_{I,I} +\hol_{I,I} + \hol_{I,I}(\pdico_I \transp \pdico_I)^{-1}H_{I,I}.\label{dico_transp_pdico_dagger} 
\end{align}
Note that for any $I$ we have $\|\hol_{I,I} \| \leq \|(\pdico \holdiag -Z)_I\| \cdot \|\pdico_I\| \leq  \eps \sqrt{S} \cdot \sqrt{S} < 2S$ and therefore $\|\dico_I  \hol_{I,I} R_I  (D_{\sqrtpi  \cdot \alpha } )^{-1}\| \leq  \rho \alphamin/2 = r$. 
So substituting the expression for $\dico_I \transp (\pdico_I^{\dagger})\transp$ above into \eqref{c1_EG_signs_coeff} resp. \eqref{c1_HGsplit} and rewriting $\dico_I = \dico \weights \weights^{-1} R_I\transp$ we get
\begin{align}
\| \E[ \hat{Y} ] \| &\leq [2\P(\mathcal{H}^c)+ \P(\mathcal{G}^c)] \cdot r + \| \E_{\mathcal{G}}[\dico_I (\hol_{I,I}+\hol_{I,I}(\pdico_I \transp \pdico_I)^{-1}H_{I,I}) R_I (D_{\sqrtpi  \cdot \alpha } )^{-1}  ] \| \notag\\
&\leq [2\P(\mathcal{H}^c)+ \P(\mathcal{G}^c)] \cdot r + \P(\mathcal{G}^c) \cdot r  + \| \E[\dico_I \hol_{I,I} R_I (D_{\sqrtpi  \cdot \alpha } )^{-1}  ] \| \notag \\
& \hspace{5cm} +\| \E_{\mathcal{G}}[\dico_I \hol_{I,I}(\pdico_I \transp \pdico_I)^{-1}H_{I,I} R_I (D_{\sqrtpi  \cdot \alpha } )^{-1}  ] \|\notag\\
&\leq [\P(\mathcal{H}^c)+ \P(\mathcal{G}^c)] \cdot \rho \alphamin  + \|\dico \weights\| \cdot \| \E[\weights^{-1} R_I \transp\hol_{I,I} R_I \weights^{-1}]\| \cdot \| D_{ \alpha } ^{-1} \| \notag\\
& \hspace{2.3cm} + \|\dico \weights\| \cdot \| \E_{\mathcal{G}}[\weights^{-1} R_I \transp \hol_{I,I}(\pdico_I \transp \pdico_I)^{-1}H_{I,I} R_I \weights^{-1}]\| \cdot \| D_{ \alpha } ^{-1}\|. \label{bound_c1_2HG}
\end{align}
Using Corollary~\ref{the_corollary}\eqref{E_RtNR} and $\eps \leq \min\{\delta, \sqrt{2}\}$, we bound the first expectation as
\begin{align}
   \| \E[\weights^{-1} R_I \transp \hol_{I,I} R_I \weights^{-1}]\|  
 \leq  3 \| \weights \hol \weights  \|  &\leq 3 \cdot  ( \|\dico \weights\| \cdot \eps^2/2 + \| Z \weights\|) \cdot\| \pdico \weights\|\notag\\
& \leq 3 \cdot \delta \cdot \| \pdico \weights\| \cdot ( \|\dico \weights\| + 1). \label{bound_c1a}
\end{align}
Before we estimate the second expectation, note that applying Corollary~\ref{the_corollary}\eqref{E_RtNNtR_gen} to $H,\hol,\hol \transp$ and using that $\max\{\|\pdico\weights\|,\|\dico \weights\| \}\leq \nu/C \leq 1/8$ yields the following three bounds, whose derivation can be found in Appendix~\ref{sec:supp_proof_corollary},
\begin{align}
    \| \E[ \weights^{-1}  R_I\transp H_{I,I} H_{I,I} \transp R_I  \weights^{-1}]\| &\leq 2 \cdot \| \pdico \weights \|^2 ,\label{E_RtHHR}\\
    \| \E[ \weights^{-1}  R_I\transp \hol_{I,I} \hol_{I,I} \transp R_I  \weights^{-1}]\|
    &  \leq 1/2 \cdot (3\|Z\weights\| + 3\eps )^2 \cdot \|\pdico\weights\|^2, \label{E_RtNNtR}\\
    \| \E[ \weights^{-1}  R_I\transp \hol_{I,I} \transp \hol_{I,I} R_I  \weights^{-1}]\| 
    &\leq 2 \cdot (\|\dico \weights\| \cdot \eps^2/2 + \|Z \weights\|)^2  \label{E_RtNtNR} .
\end{align}
Applying Theorem~\ref{bmb} to the second expectation in \eqref{bound_c1_2HG}, using that on $\mathcal{G}$ we have $\| (\pdico_I \transp \pdico_I)^{-1}\|\leq (1-\vartheta)^{-1} \leq 4/3$, and the first two inequalities above yields
\begin{align}
   \| \E_{\mathcal{G}}&[\weights^{-1} R_I\transp \hol_{I,I}\cdot (\pdico_I \transp \pdico_I)^{-1} \cdot H_{I,I} R_I \weights^{-1}]\|\notag \\
   &\leq  \| \E[\weights^{-1} R_I\transp \hol_{I,I}\hol_{I,I}\transp R_I\transp \weights \|^{1/2} \cdot 4/3 \cdot\| \E[\weights^{-1} R_I  H_{I,I}\transp H_{I,I}R_I  \weights^{-1} ] \|^{1/2} \notag\\
   &\leq \big(4\|Z\weights\| + 4\eps \big)\cdot \|\pdico\weights\|^2 \leq 8 \cdot \delta \cdot \|\pdico\weights\|^2. \label{bound_c1b}
   \end{align}
Substituting \eqref{bound_c1a}, \eqref{bound_c1b} and the probability bound from Lemma~\ref{prob_bounds} into \eqref{bound_c1_2HG} leads to
   \begin{align}
\| \E[ \hat{Y} ] \|\leq \alphamin \deltamin /32 + \delta/\alphamin \cdot\|\dico \weights\| \cdot  \| \pdico \weights\| \cdot (3 + 3\|\dico \weights\| + 8\|\pdico \weights\|). \label{c1_final_bound}
\end{align}
By the assumptions of Theorem~\ref{the_theorem} we have in both regimes $\|\dico \weights\| \leq  \gamma \alphamin \nu/(4C)$ and $ \|\pdico \weights\| \cdot \delta \leq  \gamma \alphamin \nu/(2C)$, while due to \eqref{bound_pdico_r2} we have $\|\pdico \weights\| \leq \gamma \alphamin\nu/C $ again in both regimes. Bounding the quantity in \eqref{c1_final_bound} in two ways we get
\begin{align*}
    \| \E[ \hat{Y} ] \|  
    & \leq \frac{\alphamin\deltamin }{32}  + \frac{\alphamin \gamma}{C} \cdot  \min\left\{ \frac{ \gamma \nu^2}{C}, \frac{2 \gamma \nu^2}{C} \cdot \delta\right\}  \leq \frac{\alphamin\deltamin }{32} + \frac{\alphamin \gamma}{C} \cdot \min \{\delta_\circ, \delta \} \leq   \alphamin \cdot \frac{ \Delta}{16}.
\end{align*}
Finally, an application of the matrix Bernstein inequality~\ref{bernstein} with $t=m=\alphamin  \Delta /16$ and $r =  \alphamin \rho/2$ and some simplifications yield the desired bound.
\end{proof}
%%%%%%%%%%%%%%%%%%%%%%%%%%%%%%%%%%%%%%%%%%%%%%%%%%%%%%%%%%%%%%%%%%%%%%%%%%%%%%%%%%%%%%%%%%%%%%%
%%%%%%%%%%%%%%%%%%%%%%%%%%%%%%%%%%%%%%%%%%%%%%%%%%%%%%%%%%%%%%%%%%%%%%%%%%%%%%%%%%%%%%%%%%%%%%%
%%%%%%%%%%%%%%%%%%%%%%%%%%%%%%%%%%%%%%%%%%%%%%%%%%%%%%%%%%%%%%%%%%%%%%%%%%%%%%%%%%%%%%%%%%%%%%%
%%%%%%%%%%%%%%%%%%%%%%%%%%%%%%%%%%%%%%%%%%%%%%%%%%%%%%%%%%%%%%%%%%%%%%%%%%%%%%%%%%%%%%%%%%%%%%%
%%%%%%%%%%%%%%%%%%%%%%%%%%%%%%%%%%%%%%%%%%%%%%%%%%%%%%%%%%%%%%%%%%%%%%%%%%%%%%%%%%%%%%%%%%%%%%%
%%%%%%%%%%%%%%%%%%%%%%%%%%%%%%%%%%%%%%%%%%%%%%%%%%%%%%%%%%%%%%%%%%%%%%%%%%%%%%%%%%%%%%%%%%%%%%%
The next lemma shows that the matrix $B = \sum_{n = 1}^N \hat{x}_n \hat{x}_n\transp$ essentially behaves like a diagonal matrix. In particular, after rescaling we have
$
(D_{\sqrtpi  \cdot \alpha } )^{-1} \matb (D_{\sqrtpi  \cdot \alpha \cdot \coefficient})^{-1} \approx \mathbbm{I}.
$

\begin{lemma}\label{lemma_claim2}
Under the assumptions of Theorem~\ref{the_theorem} we have
\begin{align*}
    \P \left( \| (D_{\sqrtpi  \cdot \alpha } )^{-1} \matb (D_{\sqrtpi  \cdot \alpha \cdot \coefficient}   )^{-1} - \mathbbm{I}\| >  \Delta/4 \right)
    \leq 2K \exp{\left(- \frac{N ( \Delta/16 )^2 }{2 \rho^2+ \rho  \Delta/16 }\right)}.\label{eq:lemma_claim2}
\end{align*}
\end{lemma}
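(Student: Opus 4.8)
The plan is to mirror the structure of the proof of Lemma~\ref{lemma_claim1}: write the rescaled matrix $(D_{\sqrtpi\cdot\alpha})^{-1}\matb(D_{\sqrtpi\cdot\alpha\cdot\coefficient})^{-1} - \mathbbm{I}$ as a normalised sum $\tfrac1N\sum_n \hat{B}_n$ of \iid centred random matrices, bound the operator norm $\|\hat B_n\|$ and the expectation $\|\E[\hat B_n]\|$, and then invoke the matrix Bernstein inequality~\ref{bernstein} with $r = \rho$ and $t = m = \Delta/16$. Concretely I would set
\[
\hat B_n := (D_{\sqrtpi\cdot\alpha})^{-1} R_{\hat I_n}\transp \pdico_{\hat I_n}^{\dagger} y_n y_n\transp (\pdico_{\hat I_n}^{\dagger})\transp R_{\hat I_n} (D_{\sqrtpi\cdot\alpha\cdot\coefficient})^{-1} \cdot \indi_{\mathcal B(y_n)}(\hat I_n) - \mathbbm{I},
\]
so that $\tfrac1N\sum_n(\hat B_n + \mathbbm{I}) = (D_{\sqrtpi\cdot\alpha})^{-1}\matb(D_{\sqrtpi\cdot\alpha\cdot\coefficient})^{-1}$. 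The norm bound is the easy part: using $\|\hat x_n\|\le\kappa\|y_n\|$, $\|y_n\|\le Sc_{\max}$ and $\|(D_{\sqrtpi\cdot\alpha})^{-1}\|,\|(D_{\sqrtpi\cdot\alpha\cdot\coefficient})^{-1}\| \le \pimin^{-1/2}\alphamin^{-1}\gamma^{-2}$-type factors, one gets $\|\hat B_n\| \lesssim \kappa^2 S^2 c_{\max}^2 \gamma^{-2}\alphamin^{-2}\pimin^{-1} + 1 \le \rho$, exactly as in Lemma~\ref{lemma_claim1}.

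The substantive work is bounding $\|\E[\hat B_n]\|$ by $\Delta/16$. As in Lemma~\ref{lemma_claim1}, I would first swap the thresholded support $\hat I_n$ for the true support $I_n$ at a cost $2\P(\mathcal H^c)\cdot\rho$, restrict to well-conditioned supports at a cost $\P(\mathcal G^c)\cdot\rho$, and absorb both into $\deltamin/32$ via Lemma~\ref{prob_bounds}. On $\mathcal G$ one takes the expectation over signs and coefficients: since $\E_{\sigma,c}[x_I x_I\transp] = \coeff_{I,I}\diag(\one_I)$ up to the diagonal structure, the term $y y\transp = \dico_I x_I x_I\transp\dico_I\transp$ contributes $\dico_I\coeff_{I,I}\dico_I\transp$ in expectation. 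Then I would use the key identity~\eqref{dico_transp_pdico_dagger}, i.e. $\dico_I\transp(\pdico_I^\dagger)\transp = (\coeffm)_{I,I} + \hol_{I,I} + \hol_{I,I}(\pdico_I\transp\pdico_I)^{-1}H_{I,I}$, applied on both sides of $x_I x_I\transp$. The leading term $(\coeffm)_{I,I}\coeff_{I,I}(\coeffm)_{I,I}$ combined with the rescaling and $\E[R_I\transp\coeff_{I,I}\diag(\one_I)R_I\weights^{-2}] = \mathbbm{I}$ (after identifying $\E[\diag(\one_I)] = D_\pi$) should exactly cancel the $-\mathbbm{I}$. What remains are cross terms involving $\hol_{I,I}$ and $H_{I,I}$, which I would control using Corollary~\ref{the_corollary} parts~\eqref{E_RtNR} and~\eqref{E_RtNNtR_gen}, Theorem~\ref{bmb}, and the derived bounds~\eqref{E_RtHHR}--\eqref{E_RtNtNR}; each such term carries at least one factor of $\delta$ (through $\|\hol\weights\|\lesssim\delta\|\pdico\weights\|$ and $\|H_{I,I}\|$ small) and one factor of $\|\pdico\weights\|\lesssim\gamma\alphamin\nu/C$, so the total is $\lesssim \delta\cdot(\gamma\nu/C)\cdot(\text{const}) \le \Delta/16$ after using the theorem's assumptions in both regimes exactly as at the end of Lemma~\ref{lemma_claim1}.

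The main obstacle I anticipate is the bookkeeping in the expansion of $\dico_I\transp(\pdico_I^\dagger)\transp \cdot x_I x_I\transp \cdot \pdico_I^\dagger\dico_I$: there are now \emph{two} copies of the three-term expansion~\eqref{dico_transp_pdico_dagger}, producing up to nine cross terms, and one must verify that the single leading term cancels $\mathbbm{I}$ cleanly while every other term is $O(\delta\|\pdico\weights\|)$ or smaller. A secondary subtlety is that $\matb = \tfrac1N\hat X\hat X\transp$ is built from $\hat x\hat x\transp$ rather than $x\hat x\transp$, so both the left and the right factor are perturbed (unlike Lemma~\ref{lemma_claim1}, where $x x\transp$ appears only once in a symmetric-looking product); this is why the bound~\eqref{E_RtNtNR} for $\hol_{I,I}\transp\hol_{I,I}$, as opposed to $\hol_{I,I}\hol_{I,I}\transp$, is needed here. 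Once the algebra is organised so that every residual term visibly factors a $\delta$ and a small $\|\pdico\weights\|$, the conclusion follows by the same two-way estimate and matrix Bernstein application (with $t=m=\Delta/16$, $r=\rho$) that closed Lemma~\ref{lemma_claim1}.
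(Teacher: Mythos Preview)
Your overall plan is the paper's plan: define $\hat Y_n$ exactly as you write $\hat B_n$, swap $\hat I_n\to I_n$ and $\mathbb I\to \diag(\one_{I_n})\weightss^{-1}$ at cost $[2\P(\mathcal H^c)+\P(\mathcal G^c)]\rho$, integrate out $(\sigma,c)$, use the identity~\eqref{dico_transp_pdico_dagger} and close with matrix Bernstein. Two points, however, deserve attention.

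First, the paper does \emph{not} multiply out two three-term expansions. It uses the shorter form $\dico_I\transp(\pdico_I^\dagger)\transp=(\coeffm)_{I,I}+\hol_{I,I}(\pdico_I\transp\pdico_I)^{-1}$ on both sides, so that $\pdico_I^\dagger\dico_I\dico_I\transp(\pdico_I^\dagger)\transp-(\coeffm^2)_{I,I}$ produces exactly three residual terms: two transposed copies of $(\coeffm)_{I,I}\hol_{I,I}(\pdico_I\transp\pdico_I)^{-1}$ and one quadratic term $(\pdico_I\transp\pdico_I)^{-1}\hol_{I,I}\transp\hol_{I,I}(\pdico_I\transp\pdico_I)^{-1}$. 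This is considerably tidier than nine cross terms and leads to the bound $\|\E[\hat Y]\|\le\Delta/6$ (not $\Delta/16$); Bernstein is then applied with $m=\Delta/6$, $t=\Delta/16$, $r=3\rho/4$.

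Second, and more importantly, your sketch is missing a genuine ingredient. To apply Lemma~\ref{bmb} to the quadratic term (or to the higher cross terms in your nine-fold expansion) you need a \emph{uniform} bound on the middle factor $\hol_{I,I}(\pdico_I\transp\pdico_I)^{-1}$ over $I\in\mathcal G$. The naive bound $\|\hol_{I,I}\|\le 2S$ is far too large. The paper obtains
\[
\|\hol_{I,I}(\pdico_I\transp\pdico_I)^{-1}\|\le \min\{7/3,\,2\delta/\nu\}=:\Gamma,
\]
the second alternative coming from $\|\hol_{I,I}(\pdico_I\transp\pdico_I)^{-1}\|=\|\holdiag_{I,I}-Z_I\transp(\pdico_I^\dagger)\transp\|\le \eps^2/2+\sqrt{2}\,\delta/\nu$, which uses that $\|Z_I\|\le\sqrt{2}\,\delta/\nu$ on $\mathcal F_Z\supseteq\mathcal G$. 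This is precisely why $\mathcal F_Z$ is part of the definition of $\mathcal G$ in~\eqref{defG}; Lemma~\ref{lemma_claim1} never needs it, but here it is essential to make the quadratic contribution scale like $\delta^2$ in the second regime. Without this step your claim that ``each such term carries at least one factor of $\delta$'' does not go through for the terms containing $\hol_{I,I}(\pdico_I\transp\pdico_I)^{-1}$ as a middle factor.
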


\begin{proof}
We will follow the approach in last proof very closely, that is, we write the matrix $(D_{\sqrtpi  \cdot \alpha } )^{-1} \matb (D_{\sqrtpi  \cdot \alpha \cdot \coefficient})^{-1} - \mathbbm{I}$ as a scaled sum of independent random matrices $\hat Y_n$ and apply the matrix Bernstein inequality. Recalling that $\hat{I}_n$ denotes the set found by thresholding and that $\mathcal{B}(v): = \{ I : \|\pdico_I^\dagger v\| \leq \kappa \|v\| \}$ denotes the set of possible stable supports for $v$, we define for $n \in [N]$ the matrices $\hat Y_n$ as well as their auxiliary counterparts $Y_n$ as
\begin{align*}
     \hat{Y}_n :&=  (D_{\sqrtpi  \cdot \alpha }  )^{-1}R_{\hat{I}_n}\transp \pdico_{\hat{I}_n}^{\dagger} y_n y_n \transp (\pdico_{\hat{I}_n}^{\dagger })\transp R_{\hat{I}_n} (D_{\sqrtpi  \cdot \alpha \cdot \coefficient} )^{-1} \indi_{\mathcal{B}(y_n)}(\hat{I}_n) - \mathbbm{I} \nonumber \\
\mbox{and} \quad     Y_n : &= (D_{\sqrtpi  \cdot \alpha }  )^{-1} R_{I_n}\transp \pdico_{I_n}^{\dagger} y_n y_n \transp (\pdico_{\hat{I}_n}^{\dagger })\transp R_{I_n} (D_{\sqrtpi  \cdot \alpha \cdot \coefficient} )^{-1} \indi_{\mathcal{B}(y_n)}(I_n) - \diag(\one_{I_n})\weightss^{-1}.\nonumber 
\end{align*}
Recall that $\rho =  2 \kappa^2 S^2 \gamma^{-2} \alphamin^{-2}\pimin^{-3/2}$, so both matrices can be bounded as
\begin{align}
\max\{\| \hat{Y}_n\|, \| Y_n\|\} &\leq \kappa^2 S^2 c_{\max}^2 \|\coeff^{-1}\|\|\coeffm^{-2}\| \| \weightss^{-1}\| + \| \weightss^{-1}\| \leq 3 \rho/4 =: r.
\end{align}
On $\mathcal H$, meaning whenever thresholding succeeds, the first terms of $\hat Y_n$ and $Y_n$ again coincide while the second terms are the same in expectation, that is $\E[\diag(\one_{I_n})\weightss^{-1}] = \mathbbm{I}$. So with the same argument as in \eqref{c1_HGsplit} and as usual dropping the index $n$ for convenience, we get
\begin{align}
   \| \E[ \hat{Y} ] \| 
%   &\leq  \| \E[ \hat{Y} - Y]\|  +\|\E[Y] \| \nonumber \\
%   &\leq  \| \E[\indi_{\mathcal{H}^c}(I,\sigma,p) (\hat{Y} - Y)]\| + \| \E[ \indi_{\mathcal{H}}(I,\sigma,c) (\hat{Y} - Y)]\|  +\|\E[\indi_{\mathcal{G}^c}(I) Y] \| + \|\E[ \indi_{\mathcal{G}}(I)Y] \| \nonumber \\
   &\leq  2\rho \cdot \P(\mathcal{H}^c) + \rho \cdot \P(\mathcal{G}^c) + \|\E[\indi_{\mathcal{G}}(I) Y] \| .\label{c2_EY}
   \end{align}
   Similarly as in \eqref{c1_EG_signs_coeff} we next use that all well conditioned supports are stable for any signal~$y$, meaning $\mathcal{G} \subseteq \mathcal B(y)$. Taking the expectation over $(\sigma,c)$ yields
   \begin{align}
   \|\E[\indi_{\mathcal{G}}(I)Y] \| 
   &=
     \| \E_{\mathcal{G}}  \E_{\sigma,c} [(D_{\sqrtpi  \cdot \alpha }  )^{-1} R_I \transp \pdico_I^{\dagger} \dico_I x_Ix_I\transp \dico_I \transp (\pdico_I^{\dagger})\transp R_I (D_{\sqrtpi  \cdot \alpha \cdot \coefficient}  )^{-1}  \!  - \!  \diag(\one_I) \weightss^{-1}] \| \nonumber \\
     &= \big\| \E_{\mathcal{G}} \big[(D_{\sqrtpi  \cdot \alpha }  )^{-1} R_I \transp \big(\pdico_I^{\dagger} \dico_I \transp \dico_I \transp (\pdico_I^{\dagger})\transp-(\coeffm)^2_{I,I} \big)R_I (D_{\sqrtpi  \cdot \alpha }  )^{-1}\big] \big\|. \label{c2_EGYa}
\end{align}
Using the expression for $\dico_I \transp (\pdico_I^{\dagger})\transp$ from \eqref{dico_transp_pdico_dagger} we get
\begin{align}
     \pdico_I^{\dagger} \dico_I   \dico_I \transp (\pdico_I^{\dagger})\transp - (\coeffm^2)_{I,I}  
     =   (\coeffm)_{I,I} \hol_{I,I}(\pdico_I \transp \pdico_I)^{-1} &+ (\pdico_I \transp \pdico_I)^{-1}\hol_{I,I} \transp(\coeffm)_{I,I}\notag \\
     &  +(\pdico_I \transp \pdico_I)^{-1}\hol_{I,I}\transp \hol_{I,I}(\pdico_I \transp \pdico_I)^{-1}.\label{c2_neumann}
\end{align}
The first two terms on the right hand side are each other's transpose, so substituting the above into \eqref{c2_EGYa} yields 
\begin{align}
     \|\E[\indi_{\mathcal{G}}(I)Y] \|  &\leq 2 \cdot \big\| \E_{\mathcal{G}} \big[\weights^{-1} R_I\transp \hol_{I,I}(\pdico_I \transp \pdico_I)^{-1}R_I \weights^{-1}\big] \big\| \cdot \|D_{ \alpha }^{-1}\| \notag \\
     &     + \|D_ {\alpha} ^{-1}\|\cdot \big\| \E_{\mathcal{G}} \big[\weights^{-1} R_I \transp (\pdico_I \transp \pdico_I)^{-1}\hol_{I,I}\transp \hol_{I,I}(\pdico_I \transp \pdico_I)^{-1}R_I \weights ^{-1}\big] \big\|\cdot \|D_{ \alpha } ^{-1}\| .\label{c2_EGYb}
\end{align}
To estimate the first term we repeat the steps in \eqref{bound_c1_2HG}, noting that for all $I$ we have $\|\weights^{-1} R_I\transp \hol_{I,I} R_I \weights^{-1} \| \leq \rho \alphamin^2/2$. Using \eqref{bound_c1a} as well as \eqref{bound_c1b} we get
\begin{align}
    \| \E_{\mathcal{G}} \weights^{-1} R_I\transp \hol_{I,I}(\pdico_I \transp \pdico_I)^{-1}R_I \weights^{-1}]\| &\leq 
    \P(\mathcal{G}^c) \cdot \rho \alphamin^2/2   + \| \E[\weights^{-1} R_I\transp \hol_{I,I} R_I \weights^{-1}  ] \| \notag \\
& \qquad \quad +\| \E_{\mathcal{G}}[\weights^{-1} R_I\transp \hol_{I,I}(\pdico_I \transp \pdico_I)^{-1}H_{I,I} R_I  \weights^{-1}  ] \|\notag\\
&\hspace{-1.3cm}\leq  \P(\mathcal{G}^c) \cdot \rho \alphamin^2/2 + 3 \cdot  ( \|\dico \weights\| \cdot \delta^2/2 + \| Z \weights\|) \cdot\| \pdico \weights\|\notag\\
& \qquad \quad+4\cdot \big(\|Z\weights\| + \delta \big)\cdot \|\pdico\weights\|^2. \label{bound_c2a}
\end{align}
Before we estimate the second term note that for $I\in \mathcal{G}$ we can use \eqref{hol_expressions} to bound $\|\hol_{I,I}(\pdico_I \transp \pdico_I)^{-1}\|$ in two different ways, either as 
\begin{align}
\|\hol_{I,I}(\pdico_I \transp \pdico_I)^{-1}\| &= \|(\dico - \pdico D_\alpha)_I\transp \pdico_I  (\pdico_I \transp \pdico_I)^{-1}\|\notag \\
&= \| \dico_I (\pdico_I^\dagger)\transp + (D_\alpha)_{I,I}\| \leq \sqrt{\tfrac{1+\vartheta}{1-\vartheta}} + 1 \leq \sqrt{5/3} +1\leq 7/3,
\end{align}
or recalling that on $\mathcal{G}$ we have $\|Z_I\|^2= \|Z_I\transp Z_I\|\leq 2\delta^2 \log(n K \rho /\deltamin) = 2\delta^2/\nu^2$ as
\begin{align}
\|\hol_{I,I}(\pdico_I \transp \pdico_I)^{-1}\| &= \|(\pdico \holdiag -Z)_I\transp  \pdico_I  (\pdico_I \transp \pdico_I)^{-1}\| \notag \\
&= \| \holdiag_{I,I} -  Z_I \transp (\pdico_I^\dagger)\transp \| \leq \eps^2/2 + \delta/\nu  \cdot \sqrt{2} \leq \sqrt{2} \delta/\nu \cdot(\nu/2 + 1) \leq 2\delta/\nu.
\end{align}
We next split the second term using \eqref{neumann_Hright}, apply Theorem~\ref{bmb} with the abbreviation $\Gamma =\min\{7/3, 2\delta/\nu\}$ and use Corollary~\ref{the_corollary}\eqref{E_RtNNtR_gen} or rather \eqref{E_RtNtNR} and \eqref{E_RtHHR} to get
\begin{align}
\big\| \E_{\mathcal{G}} \big[&\weights^{-1} R_I \transp (\pdico_I \transp \pdico_I)^{-1}\hol_{I,I}\transp \hol_{I,I}(\pdico_I \transp \pdico_I)^{-1}R_I \weights ^{-1}\big] \big\|\notag \\
&\leq \big\| \E \big[\weights^{-1} R_I \transp \hol_{I,I}\transp \hol_{I,I}R_I \weights ^{-1}\big] \big\| \notag \\
&\quad + 2  \big\| \E_{\mathcal{G}} \big[\weights^{-1} R_I \transp \hol_{I,I}\transp \cdot  \hol_{I,I}(\pdico_I \transp \pdico_I)^{-1} \cdot H_{I,I} R_I \weights ^{-1}\big] \big\| \notag\\
&\quad \quad + \big\| \E_{\mathcal{G}} \big[\weights^{-1} R_I \transp H_{I,I} \cdot (\pdico_I \transp \pdico_I)^{-1} \hol_{I,I}\transp \hol_{I,I}(\pdico_I \transp \pdico_I)^{-1} \cdot H_{I,I} R_I \weights ^{-1}\big] \big\| \notag\\
&\leq \big\| \E \big[\weights^{-1} R_I \transp \hol_{I,I}\transp \hol_{I,I}R_I \weights ^{-1}\big] \big\|\notag \\
& \quad + 2\big\| \E \big[\weights^{-1} R_I \transp \hol_{I,I}\transp \hol_{I,I}R_I \weights^{-1}\big]\big\|^{1/2} \cdot \Gamma \cdot \big\| \E \big[\weights^{-1} R_I \transp H_{I,I}H_{I,I} R_I \weights ^{-1}\big] \big\|^{1/2} \notag\\
&\quad \quad + \big\| \E \big[\weights^{-1} R_I \transp H_{I,I}H_{I,I} R_I \weights ^{-1}\big] \big\| \cdot \Gamma^2 \notag\\
&= \left(\big\| \E \big[\weights^{-1} R_I \transp \hol_{I,I}\transp \hol_{I,I}R_I \weights^{-1}\big]\big\|^{1/2} + \Gamma \,\big\| \E \big[\weights^{-1} R_I \transp H_{I,I}H_{I,I} R_I \weights ^{-1}\big] \big\|^{1/2}\right)^2 \notag\\
&\leq 2\left( \|\dico \weights\|\cdot \delta^2/2 + \|Z \weights\| + \min\{7/3, 2\delta/\nu\} \cdot  \|\pdico\weights\|  \right)^2.\label{bound_c2b}
\end{align}
Substituting \eqref{bound_c2a} and \eqref{bound_c2b} into \eqref{c2_EGYb} and this in turn into \eqref{c2_EY} yields
\begin{align}
 \| \E[ \hat{Y} ] \| 
   \leq  2\rho \cdot [ \P(\mathcal{H}^c) +  \P(\mathcal{G}^c)] +  6/\alphamin \cdot  \big( \|\dico \weights\| \cdot \delta^2/2 + \| Z \weights\|\big) &\cdot\| \pdico \weights\|\notag\\
 +8/\alphamin \cdot \big(\|Z\weights\| + \delta\big)&\cdot \|\pdico\weights\|^2 \notag \\
  +2/\alphamin^2 \cdot \big(\|\dico \weights\| \cdot \delta^2/2 + \| Z \weights\| +\min\{7/3, 2\delta/\nu\} &\cdot  \|\pdico\weights\|  \big)^2.
 \end{align}
We next show that the expression above is bounded by $\Delta/6$. In both regimes we have by Lemma~\eqref{prob_bounds} that $2\rho \cdot [ \P(\mathcal{H}^c) +  \P(\mathcal{G}^c)] \leq \deltamin/16$. In the first regime, if $\delta>\delta_\circ=\gamma \nu^2/C$ and therefore $\min\{\delta_\circ, \delta\} =\delta_\circ$, we use that $\delta\leq \sqrt{2}$, $\max\{\|\pdico \weights \|, \|\dico \weights \| \}\leq \alphamin\gamma \nu/(4C)$ and thus $\|Z \weights \|\leq \alphamin\gamma \nu/(2C)$ to get 
\begin{align*}
 \| \E[ \hat{Y} ] \| 
   \leq  \frac{\deltamin}{16} + \frac{\gamma \nu^2}{C} \cdot \frac{\gamma}{16C}  \big[ 18 \alphamin  + 12\alphamin + 2 \cdot (16/3)^2\big] \leq \frac{\deltamin}{16} + \delta_\circ \cdot \frac{87\gamma}{16C}\leq \frac{\Delta}{6}. 
 \end{align*}
Conversely in the second regime, when $\delta\leq \delta_\circ$ and therefore $\min\{\delta_\circ, \delta\} =\delta$, we use that $\|Z \weights \|\leq \delta \leq \delta_\circ =\gamma \nu^2/C$ and $\|\pdico \weights \|\leq \alphamin\gamma \nu/C$ as well as $\nu \leq 1/2$ and $\alphamin \geq 1-\delta_\circ^2/2 \geq 17/18$ to get
\begin{align*}
 \| \E[ \hat{Y} ] \| 
   \leq  \frac{\deltamin}{16} +  \delta \cdot \frac{\gamma}{2C}  \left[ 6 + \frac{6\alphamin\gamma^2}{64C^2}    
 +\frac{8\alphamin \gamma}{4C} 
  + \Big(\frac{\gamma^2 }{64C^2} + \frac{1}{\alphamin}+ \frac{ 2\gamma }{C}   \Big)^{\!2} \right]
  \leq  \frac{\deltamin}{16}  +  \delta \cdot \frac{4\gamma}{C} \leq \frac{\Delta}{6}.
 \end{align*}
 As before we arrive at the desired bound after applying the matrix Bernstein inequality~\ref{bernstein} for $t= \Delta /16$ with $r = \frac{3}{4}  \rho$ and $m =  \Delta /6$ and some simplifications.
\end{proof}
%%%%%%%%%%%%%%
% !TEX root = mother_file.tex
%%%%%%%%%%%%%%
%%%%%%%%%%%%%%%%%%%%%%%%%%%%%%%%%%%%%%%%%%%%%%%%%%%%%%%%%%%%%%%%%%%%%%%%%%%%%%%%%%%%%%%%%%%%%%%
%% bounding the l2 norms
%%%%%%%%%%%%%%%%%%%%%%%%%%%%%%%%%%%%%%%%%%%%%%%%%%%%%%%%%%%%%%%%%%%%%%%%%%%%%%%%%%%%%%%%%%%%%%%
Now we turn to bounding individual columns of the random matrices treated in the last two lemmas. 
\begin{lemma}\label{lemma_claim3}
Under the conditions of Theorem~\ref{the_theorem} we have
\begin{align*}
  \P \Big( \| \dico \mata (D_{\pi \cdot \alpha \cdot \coefficient})^{-1}e_\ell - \atom_{\ell}  \| >   \Delta/8 \Big) \leq 28 \exp{\left(- \frac{N ( \Delta/16)^2 }{2 \rho^2+ \rho   \Delta/16 }\right)} .
\end{align*} 
\end{lemma}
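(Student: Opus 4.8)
The plan is to adapt the proof of Lemma~\ref{lemma_claim1} to a single column. For $n\in[N]$ I would set
\[
\hat y_n := y_n\,(\hat x_n)_\ell\,(\pi_\ell\alpha_\ell\coefficient_\ell)^{-1}\,\indi_{\mathcal B(y_n)}(\hat I_n) - \atom_\ell ,
\]
where $\mathcal B(v):=\{I:\|\pdico_I^\dagger v\|\le\kappa\|v\|\}$, $\hat I_n$ is the support returned by thresholding and $(\hat x_n)_\ell$ is the $\ell$-th estimated coefficient (which vanishes unless $\ell\in\hat I_n$). The $\hat y_n$ are i.i.d.\ vectors in $\R^{\ddim}$ with $\tfrac1N\sum_n\hat y_n=\dico\mata(D_{\pi\cdot\alpha\cdot\coefficient})^{-1}e_\ell-\atom_\ell$, so it suffices to produce an almost-sure bound $\|\hat y_n\|\le r$ and a mean bound $\|\E[\hat y]\|\le\alphamin\Delta/16$ and then invoke a dimension-free vector Bernstein inequality with $t=m=\alphamin\Delta/16$ and $r=\rho\alphamin/2$, exactly as in Lemma~\ref{lemma_claim1}.

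The almost-sure bound is immediate from the pathological-estimate cut-off: $|(\hat x_n)_\ell|\le\|\hat x_n\|\le\kappa\|y_n\|$ and $\|y_n\|\le Sc_{\max}$ give $\|\hat y_n\|\le\kappa S^2 c_{\max}^2\,\pimin^{-1}\alphamin^{-1}c_{\min}^{-2}+1\le\rho\alphamin/2=:r$ by the definition of $\rho$. For the mean I would, as in Lemma~\ref{lemma_claim1}, introduce the auxiliary vector $y_n$ obtained by replacing $\hat I_n$ with the true support $I_n$ and $\atom_\ell$ with $\dico\diag(\one_{I_n})\weightss^{-1}e_\ell=\pi_\ell^{-1}\atom_\ell\,\indi_{I_n}(\ell)$, noting that $\E[\pi_\ell^{-1}\atom_\ell\,\indi_{I_n}(\ell)]=\atom_\ell$. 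On $\mathcal H$ the first terms of $\hat y_n$ and $y_n$ coincide and their second terms coincide in expectation, so, exactly as in~\eqref{c1_HGsplit},
\[
\|\E[\hat y]\|\le\big[2\P(\mathcal H^c)+\P(\mathcal G^c)\big]\,r+\big\|\E[\indi_{\mathcal G}(I)\,y]\big\| ,
\]
and the first summand is at most $\deltamin/32$ by Lemma~\ref{prob_bounds}.

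The heart of the argument is the bound on $\|\E[\indi_{\mathcal G}(I)\,y]\|$. Taking the expectation over signs and coefficients turns the signal's second moment into $(\coeff)_{I,I}$, and substituting the Neumann identity~\eqref{dico_transp_pdico_dagger}, $\dico_I\transp(\pdico_I^{\dagger})\transp=(\coeffm)_{I,I}+\hol_{I,I}+\hol_{I,I}(\pdico_I\transp\pdico_I)^{-1}H_{I,I}$, the leading $(\coeffm)_{I,I}$-term reconstructs \emph{exactly} $\pi_\ell^{-1}\atom_\ell\,\indi_I(\ell)$ and hence cancels the subtracted auxiliary term, leaving only the two error terms generated by $\hol_{I,I}$ and by $\hol_{I,I}(\pdico_I\transp\pdico_I)^{-1}H_{I,I}$. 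Crucially, extracting the column $e_\ell$ turns $R_I(D_{\pi\cdot\alpha\cdot\coefficient})^{-1}e_\ell$ into $(\pi_\ell\alpha_\ell\coefficient_\ell)^{-1}R_Ie_\ell$, which vanishes unless $\ell\in I$ and therefore carries an indicator $\indi_I(\ell)$; the first-order inclusion probability $\pi_\ell$ this produces cancels the $\pi_\ell^{-1}$ of the column scaling, so that no negative power of $\pimin$ survives in the mean bound. I would bound the linear term with Corollary~\ref{the_corollary}\eqref{E_WRtRVt} after writing $\dico_I(\coeff)_{I,I}\hol_{I,I}R_Ie_\ell\,\indi_I(\ell)=\dico\,R_I\transp R_I\,(\coeff\,\hol\,e_\ell)\,\indi_I(\ell)$, whose relevant $\ell$-th coordinate vanishes since $\hol$ has zero diagonal, so the ``diagonal'' contribution $\|w_\ell\|\|v_\ell\|$ in that estimate drops out; and the quadratic term by first writing $(\pdico_I\transp\pdico_I)^{-1}H_{I,I}=(\pdico_I\transp\pdico_I)^{-1}-\mathbbm{I}$ via~\eqref{neumann_Hright}, applying Theorem~\ref{bmb} with the bound $\|(\pdico_I\transp\pdico_I)^{-1}-\mathbbm{I}\|\le 1/3$ valid on $\mathcal G$, and then invoking Corollary~\ref{the_corollary}\eqref{E_RtNNtR_indl_gen} together with~\eqref{E_RtHHR}. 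Collecting the pieces should bound $\|\E[\indi_{\mathcal G}(I)\,y]\|$ by a constant multiple of $\alphamin^{-1}\|\dico\weights\|\big(\|\dico\weights\|+\|\pdico\weights\|\delta\big)$ (times a $\gamma$-conditioning factor coming from $\coeff$), which the hypothesis~\eqref{cond_dico}, the companion bound~\eqref{bound_pdico_r2} on $\|\pdico\weights\|$ and the bound $\|\pdico\weights\|\delta\lesssim\gamma\alphamin\nu/C$ valid in both regimes (as established in the proof of Lemma~\ref{lemma_claim1}) push below $\alphamin\Delta/16$. A final application of the vector Bernstein inequality then completes the proof.

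\textbf{Main obstacle.} The delicate part is the column bookkeeping in the last step: passing from the operator-norm estimates of Corollary~\ref{the_corollary}(a),(b) used in Lemma~\ref{lemma_claim1} to their $\ell$-indexed analogues (c),(d), and verifying both that the leading term reconstructs $\atom_\ell$ \emph{exactly} (so that subtracting $\atom_\ell$ leaves only genuinely small pieces) and that the inclusion probability $\pi_\ell$ produced by $\indi_I(\ell)$ exactly compensates the $\pi_\ell^{-1}$ of the column normalisation, so that the mean bound carries no factor $\pimin^{-1}$ --- which would otherwise destroy the claimed accuracy $\Delta/8$ and must remain confined to the almost-sure bound $r$, i.e.\ to $\rho$. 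A secondary point is the choice of a dimension-free form of Bernstein's inequality, which is what produces the small universal prefactor in the failure probability rather than one scaling with $\ddim$.
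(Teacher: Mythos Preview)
Your overall architecture matches the paper's proof almost exactly: the same auxiliary vector with $\hat I_n$ replaced by $I_n$ and $\atom_\ell$ replaced by $\pi_\ell^{-1}\atom_\ell\indi_{I_n}(\ell)$, the same $\mathcal H^c/\mathcal G^c$ bookkeeping via Lemma~\ref{prob_bounds}, the same Neumann identity~\eqref{dico_transp_pdico_dagger}, and the same vector Bernstein finish. Your linear term is also handled the same way as in the paper's~\eqref{c3_b1}. Two points, one cosmetic and one substantive.

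\emph{Cosmetic.} You copy the targets $m=\alphamin\Delta/16$ and $r=\rho\alphamin/2$ from Lemma~\ref{lemma_claim1}, but the column normalisation $(D_{\pi\cdot\alpha\cdot\coefficient})^{-1}e_\ell$ leaves a factor $\alpha_\ell^{-1}\ge\alphamin^{-1}$ that cannot be compensated; the paper therefore aims only for $m=\Delta/16$ with $r=3\rho/4$, which is what the statement of the lemma actually needs.

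\emph{Substantive.} Your treatment of the quadratic term loses a factor that is essential in regime~\eqref{regime1}. By rewriting $(\pdico_I\transp\pdico_I)^{-1}H_{I,I}=(\pdico_I\transp\pdico_I)^{-1}-\mathbbm{I}$ and applying Theorem~\ref{bmb} with $B=(\pdico_I\transp\pdico_I)^{-1}-\mathbbm{I}$ and $C=R_Ie_\ell$, the $C$-side contributes only $\|\E[C\transp C]\|^{1/2}=\sqrt{\pi_\ell}$. Even granting a $\sqrt{\pi_\ell}$ from the $A$-side via Corollary~\ref{the_corollary}\eqref{E_RtNNtR_indl_gen} (which in any case requires the extra split $\dico R_I\transp=\atom_\ell(R_Ie_\ell)\transp+\dico\,\mathbb{I}_{\ell^c}R_I\transp$ that you do not mention), the resulting bound on the quadratic contribution, after dividing by $\alphamin\pi_\ell$, is of order $\alphamin^{-1}\|\dico\weights\|\,\delta$. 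In regime~\eqref{regime1} one only has $\|\dico\weights\|\lesssim\nu$ while $\delta$ may be of order~$1$, so this is $O(\nu)$, whereas $\Delta/16$ is $O(\delta_\circ)=O(\nu^2)$; the inequality fails. The paper avoids this by \emph{not} absorbing $H_{I,I}$ into $B$: it keeps $B=(\pdico_I\transp\pdico_I)^{-1}$ and takes $C=H_{I,I}R_Ie_\ell=H_{I,\ell}\,\indi_I(\ell)$, so that Corollary~\ref{the_corollary}\eqref{E_WRtRVt} gives $\|\E[C\transp C]\|^{1/2}\le\sqrt{\pi_\ell}\,\|e_\ell\transp H\weights\|\le\sqrt{\pi_\ell}\,\|\pdico\weights\|$. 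This extra factor $\|\pdico\weights\|$ (together with the $e_\ell e_\ell\transp/\mathbb{I}_{\ell^c}$ split on the $A$-side, see~\eqref{c3_b2}) is exactly what brings the quadratic term down to $O(\nu^2)$ in regime~\eqref{regime1} and makes the claimed bound $\Delta/16$ achievable.
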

\begin{proof}
As in the matrix case the idea is to write the vector whose norm we want to estimate as sum of independent random vectors based on the signals $y_n$ and use Bernstein's inequality. To this end we define for a fixed index $\ell$ the random vectors
\begin{align*}
\hat Y_n & :=  \big[y_n y_n\transp ( \pdico_{\hat{I}_n}^{\dagger})\transp R_{\hat{I}_n}  (D_{\pi \cdot \alpha \cdot \coefficient})^{-1} \cdot \indi_{\mathcal{B}(y_n)}(\hat I_n) - \dico \big]e_{\ell},\\
 Y_n & :=\big[y_n y_n \transp  (\pdico_{I_n}^{\dagger})\transp R_{I_n}  (D_{\pi  \cdot \alpha \cdot \coefficient})^{-1}\cdot \indi_{\mathcal{B}(y_n)}(I_n) -  \dico \diag(\one_{I_n})\weightss^{-1} \big] e_\ell.\nonumber
\end{align*}
Note that we can obtain $\hat Y_n, Y_n$ by multiplying the analogue matrices in the proof of Lemma~\ref{lemma_claim1} from the right by $\weights^{-1} e_\ell$. Following the proof strategy of Lemma~\ref{lemma_claim1} with the necessary changes, we first bound the $\ell_2$-norm of the random vectors $\hat Y_n, Y_n$ as
\begin{align*}
\max\{ \| \hat Y_n \|, \| Y_n \|\}  &\leq \kappa S^2 c_{\max}^2\|\|\weightss^{-1}\|  \|\coeff^{-1}\| \|\coeffm^{-1}\| + S \|\weightss^{-1}\| \leq 3\rho/4 =: r . 
\end{align*}
Repeating the procedures in \eqref{c1_HGsplit} we next get 
\begin{align}
    \| \E[\hat Y] \| &\leq  [2\P(\mathcal{H}^c) + \P(\mathcal{G}^c)]\cdot \rho +\|\E[\indi_{\mathcal{G}}(I)Y]\| \leq \deltamin/32 +\|\E[\indi_{\mathcal{G}}(I)Y]\|, \label{c3_EY}
\end{align} while repeating the procedures in \eqref{c1_EG_signs_coeff}/\eqref{c2_EY}, and using the expression from \eqref{dico_transp_pdico_dagger}  $\dico_I \transp (\pdico_I^{\dagger})\transp  = (\coeffm)_{I,I} +\hol_{I,I} (\pdico_I \transp \pdico_I)^{-1}=(\coeffm)_{I,I} + \hol_{I,I} + \hol_{I,I}(\pdico_I \transp \pdico_I)^{-1}H_{I,I}$ yields
\begin{align}
\|\E[\indi_{\mathcal{G}}(I)Y]\| &=  \big\|  \E_{\mathcal{G}} \big [\dico R_I \transp \big(\dico_I \transp \pdico_I^{\dagger\ast} -(\coeffm)_{I,I}  \big)R_I e_\ell\big] \big\| / (\alpha_\ell \pi_\ell) \notag \\
  &\leq \big\| \E_{\mathcal{G}}  \big [\dico R_I \transp \hol_{I,I} (\pdico_I \transp \pdico_I)^{-1}R_I e_\ell\big] \big\|/ (\alphamin  \pi_\ell) \nonumber \\
     &\leq  \left( \big\|  \E_{\mathcal{G}}  \big [\dico R_I \transp \hol_{I,I} R_I e_\ell\big] \big\| +  \big\| \E_{\mathcal{G}}  \big [\dico R_I \transp \hol_{I,I}(\pdico_I \transp \pdico_I)^{-1}H_{I,I} R_I e_\ell\big] \big\|\right)/(\alphamin  \pi_\ell).\label{c3_EGY}
\end{align}
We next use the decomposition $\mathbb{I}=\mathbb{I}_{\ell^c} + e_{\ell} e_{\ell}\transp$. Note that for any diagonal matrix $D$ we have $\mathbb{I}_{\ell^c}D e_\ell = 0$ and that for any matrix $V_{\ell\ell}= e_\ell V e_\ell$. Since $\hol_{I,I}$ has a zero diagonal and we have $\hol_{I,I} = R_I(\pdico \holdiag -Z)\transp \pdico R_I
\transp=R_I(\dico \holdiag -Z\coeffm)\transp \pdico R_I\transp$, Corollary~\ref{the_corollary}\eqref{E_WRtRVt} yields for the first term
\begin{align}
   \|\dico  \E_{\mathcal{G}}  \big [ R_I \transp \hol_{I,I} R_I e_\ell\big] \big\| 
   &= \| \E_{\mathcal{G}}  \big [\dico \mathbb{I}_{\ell^c} R_I \transp R_I(\dico \holdiag -Z\coeffm)\transp \, \indi_I(\ell) \big] \cdot \patom_\ell \big\|\notag\\
   &\leq  \pi_\ell \cdot  \| \dico \mathbb{I}_{\ell^c} \weights\| \cdot \|( \dico \holdiag -Z\coeffm)\weights\| \cdot \| \patom_\ell \big\|\notag \\
   &\leq \pi_\ell \cdot  \| \dico \weights\| \cdot  ( \| \dico \weights\| \cdot \eps^2/2 +\| Z\weights\|). \label{c3_b1}
\end{align}
Before we estimate the second term with the same split, note that applying Corollary~\ref{the_corollary}\eqref{E_RtNNtR_indl_gen} to $H,\hol,\hol \transp$ and using that $\max\{\|\pdico\weights\|,\|\dico \weights\| \}\leq \nu/C \leq 1/8$ yields the following three bounds, derived in detail in Appendix~\ref{sec:supp_proof_corollary},
\begin{align}
    \| \E [ \weights ^{-1} \mathbb{I}_{\ell^c} R_I \transp H_{I,I} H_{I,I}\transp R_I\mathbb{I}_{\ell^c} \weights ^{-1}  \cdot \indi_I(\ell)]\| 
     & \leq  9 \cdot \pi_\ell \cdot \max\{\mu(\pdico),\| \pdico \weights\| \}^2, \label{E_RtHHR_indl}\\
     \| \E [ \weights ^{-1} \mathbb{I}_{\ell^c} R_I \transp \hol_{I,I} \hol_{I,I}\transp R_I\mathbb{I}_{\ell^c} \weights ^{-1}  \cdot \indi_I(\ell)]\| 
     & \leq 9\cdot \pi_\ell \cdot \delta^2,\label{E_RtNNtR_indl}  \\
     \| \E [ \weights ^{-1} \mathbb{I}_{\ell^c} R_I \transp \hol_{I,I}\transp \hol_{I,I} R_I\mathbb{I}_{\ell^c} \weights ^{-1}  \cdot \indi_I(\ell)]\| 
     & \leq 9\cdot \pi_\ell \cdot \delta^2. \label{E_RtNtNR_indl}  
\end{align}
Further, recall that $H_{I,\ell} = R_I H e_\ell$ and $\hol_{\ell,I} = e_\ell\transp \hol R_I \transp$. So applying Theorem~\ref{bmb} and using \eqref{E_RtNNtR_indl} as well as twice Corollary~\ref{the_corollary}\eqref{E_WRtRVt}, we get 
\begin{align}
    \big\| \E_{\mathcal{G}} \big [\dico & R_I \transp \hol_{I,I}(\pdico_I \transp \pdico_I)^{-1}H_{I,I} R_I e_\ell\big] \big\| \notag \\
    &\leq \| \atom_\ell \| \cdot \big\| \E_{\mathcal{G}}\big [ \hol_{\ell,I}\cdot(\pdico_I \transp \pdico_I)^{-1}\cdot H_{I,\ell} \,\indi_I(\ell)\big] \big\|\notag \\
    & \hspace{2cm} + \| \dico \weights \| \cdot \big\|\E_{\mathcal{G}}\big [ \weights^{-1}\mathbb{I}_{\ell^c} \hol_{I,I}\cdot(\pdico_I \transp \pdico_I)^{-1}\cdot H_{I,\ell} \,\indi_I(\ell)\big] \big\|\notag \\
    & \leq \left(\| \E [ \hol_{\ell,I} \hol_{\ell,I} \transp \indi_I(\ell )]\|^{1/2} + \| \dico \weights \| \cdot\|\E [ \weights^{-1}\mathbb{I}_{\ell^c} \hol_{I,I}\hol_{I,I}\transp\mathbb{I}_{\ell^c} \weights^{-1} \indi_I(\ell )]\|^{1/2}\right) \notag \\
    & \hspace{7cm}\cdot 4/3 \cdot \| \E[ H_{I,\ell}\transp H_{I,\ell}  \indi_I(\ell )]\|^{1/2} \notag \\
    &\leq \left(\sqrt{\pi_\ell} \cdot \| e_\ell\transp \hol\weights \| + \| \dico \weights \| \cdot \sqrt{\pi_\ell}\cdot 3\delta \right) \cdot 4/3 \cdot  \sqrt{\pi_\ell} \cdot \|e_\ell\transp H\weights \|\notag \\
    &\leq  4/3 \cdot \pi_\ell  \cdot \delta \cdot (\| \pdico\weights \| + 3 \| \dico \weights \|)  \cdot \|\pdico \weights \| .\label{c3_b2}
\end{align}
Plugging the last two bounds back into~\eqref{c3_EGY} and~\eqref{c3_EY} yields
\begin{align} 
    \| \E[ \hat{Y}] \|  \leq \deltamin/32  &+  1/\alphamin\cdot  \| \dico \weights\| \cdot  ( \| \dico \weights\| \cdot \eps^2/2 +\| Z\weights\|) \notag\\
    &\qquad + 4/3 \cdot \delta/\alphamin \cdot \|\pdico \weights \| \cdot  (\| \pdico\weights \| + 3 \| \dico \weights \|)   \nonumber.
\end{align}
We next proceed as in the last proof. We always have $\| \dico \weights \| \leq \alphamin \gamma \nu/ (4C)$. If $\delta>\delta_\circ$, thus $\min\{\delta,\delta_\circ\} = \delta_\circ$, we use that additionally we have $\| \pdico \weights \| \leq \alphamin \gamma \nu/ (4C)$ and $\| Z \weights \| \leq \alphamin \gamma \nu/ (2C)$ to get
\begin{align} 
    \| \E[ \hat{Y}] \| & \leq \frac{\deltamin}{32}  + \frac{\gamma \nu^2}{C}\cdot \frac{\alphamin \gamma}{16C} \cdot ( 1+2  + 4/3 \cdot \sqrt{2} \cdot 4) \nonumber\leq \frac{\deltamin}{32}  + \delta_\circ \cdot \frac{11 \alphamin \gamma}{16C} \leq \frac{\Delta}{16}.
\end{align}
Conversely, if $\delta\leq\delta_\circ$, thus $\min\{\delta,\delta_\circ\} = \delta$, we use that $\| Z \weights \| \leq \delta $ together with $\| \pdico \weights \| \leq \alphamin \gamma \nu/ C$ and $\nu\leq 1/3$ to get
\begin{align} 
    \| \E[ \hat{Y}] \|  \leq \frac{\deltamin}{32}   &+  \delta \cdot\frac{ \gamma \nu}{4C} \cdot \left(   \frac{\alphamin \gamma \nu}{4C}  \cdot \delta/2 + 1  + 4/3  \cdot  \frac{7\alphamin \gamma \nu}{C}   \right)\leq \frac{\deltamin}{32}   + \delta \cdot \frac{ 2\gamma \nu}{4C} \nonumber \leq \frac{  \Delta}{16}. \nonumber
\end{align}
Finally an application of the vector Bernstein inequality~\ref{bernstein} for $t=m=  \Delta /16$ and $r = 3 \rho/4$ and some simplifications yield the desired bound.
\end{proof}
%%%%%%%%%%%%%%%%%%%%%%%%%%%%%%%%%%%%%%%%%%%%%%%%%%%%%%%%%%%%%%%%%%%%%%%%%%%%%%%%%%%%%%%%%%%%%%%
%%%%%%%%%%%%%%%%%%%%%%%%%%%%%%%%%%%%%%%%%%%%%%%%%%%%%%%%%%%%%%%%%%%%%%%%%%%%%%%%%%%%%%%%%%%%%%%
%%%%%%%%%%%%%%%%%%%%%%%%%%%%%%%%%%%%%%%%%%%%%%%%%%%%%%%%%%%%%%%%%%%%%%%%%%%%%%%%%%%%%%%%%%%%%%%
%%%%%%%%%%%%%%%%%%%%%%%%%%%%%%%%%%%%%%%%%%%%%%%%%%%%%%%%%%%%%%%%%%%%%%%%%%%%%%%%%%%%%%%%%%%%%%%
%%%%%%%%%%
Now to the grand final, showing that also the fourth inequality used in the proof of the main theorem is satisfied with high probability. 
\begin{lemma}\label{lemma_claim4}
Under the conditions of Theorem~\ref{the_theorem} for $\Lambda = \max\left\{\|\pdico \weights \|, \frac{\alphamin \gamma\nu}{4C} \right\}$ 
\begin{align*}
    \P \left( \Lambda \cdot \| \mathbb{I}_{\ell^c}\,(D_{\sqrtpi  \cdot \alpha })^{-1} \matb (D_{\pi  \cdot \alpha \cdot \coefficient})^{-1} \, e_\ell \| > \frac{3  \Delta}{16} \right) 
    \leq 28 \exp{\left(- \frac{N ( \Delta/16)^2 }{2\rho^2+ \rho \Delta/16 }\right)}.
\end{align*}
\end{lemma}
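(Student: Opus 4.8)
The plan is to run the template of Lemmas~\ref{lemma_claim1}--\ref{lemma_claim3}: write the vector in the statement as a scaled sum of i.i.d.\ random vectors and apply the vector Bernstein inequality~\ref{bernstein}. Concretely, set
\[
\hat{Y}_n:=\Lambda\,\mathbb{I}_{\ell^c}(D_{\sqrtpi\cdot\alpha})^{-1}R_{\hat{I}_n}\transp\pdico_{\hat{I}_n}^{\dagger}y_ny_n\transp(\pdico_{\hat{I}_n}^{\dagger})\transp R_{\hat{I}_n}(D_{\pi\cdot\alpha\cdot\coefficient})^{-1}e_\ell\cdot\indi_{\mathcal{B}(y_n)}(\hat{I}_n),
\]
so that $\tfrac1N\sum_n\hat{Y}_n$ is exactly $\Lambda\,\mathbb{I}_{\ell^c}(D_{\sqrtpi\cdot\alpha})^{-1}\matb(D_{\pi\cdot\alpha\cdot\coefficient})^{-1}e_\ell$. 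The one structural difference from the previous three lemmas is that \emph{no centering term has to be subtracted}: the projection $\mathbb{I}_{\ell^c}$ annihilates every diagonal matrix applied to $e_\ell$, so the ``leading diagonal'' contribution --- which had to be cancelled by an auxiliary term in Lemmas~\ref{lemma_claim1}--\ref{lemma_claim3} --- vanishes on its own, and it suffices to prove $\|\E[\hat{Y}_n]\|\le\Delta/8$.

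For the almost-sure bound, $\|\pdico_{\hat{I}_n}^{\dagger}y_n\|\le\kappa\|y_n\|\le\kappa S c_{\max}$, $\|(D_{\sqrtpi\cdot\alpha})^{-1}\|\le\alphamin^{-1}\pimin^{-1/2}$, $\|(D_{\pi\cdot\alpha\cdot\coefficient})^{-1}e_\ell\|\le\alphamin^{-1}\pimin^{-1}c_{\min}^{-2}$ and $\Lambda\le\gamma\alphamin\nu/C\le1$ (the last by~\eqref{bound_pdico_r2}) give $\|\hat{Y}_n\|\le\Lambda\rho/2\le 3\rho/4=:r$. To estimate $\|\E[\hat{Y}_n]\|$, pass first --- as in~\eqref{c1_HGsplit} --- to the auxiliary vectors $Y_n$ obtained by replacing $\hat{I}_n$ with the true support $I_n$ (on $\mathcal{H}$ they agree, and $\|Y_n\|\le r$), and split off $\mathcal{G}^c$; by Lemma~\ref{prob_bounds} this costs at most $[2\P(\mathcal{H}^c)+\P(\mathcal{G}^c)]\,\rho\le\deltamin/32$. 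On $\mathcal{G}\subseteq\mathcal{B}(y_n)$ the stability indicator drops; taking the expectation over $(\sigma,c)$ and using that, since the $c_i$ are i.i.d., $\E_{\sigma,c}[x_Ix_I\transp]=(D_\beta)_{I,I}$ is a scalar multiple of the identity which cancels the $D_\beta^{-1}$ inside $(D_{\pi\cdot\alpha\cdot\coefficient})^{-1}$, reduces the task to bounding
\[
\Lambda(\alpha_\ell\pi_\ell)^{-1}\,\Big\|\E_{\mathcal{G}}\Big[\indi_I(\ell)\,\mathbb{I}_{\ell^c}\weights^{-1}D_\alpha^{-1}R_I\transp\,\pdico_I^{\dagger}\dico_I\dico_I\transp(\pdico_I^{\dagger})\transp\,R_Ie_\ell\Big]\Big\|,
\]
the indicator $\indi_I(\ell)$ appearing because $R_Ie_\ell=0$ when $\ell\notin I$.

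Next substitute the identity $\dico_I\transp(\pdico_I^{\dagger})\transp=(\coeffm)_{I,I}+\hol_{I,I}(\pdico_I\transp\pdico_I)^{-1}$ of~\eqref{dico_transp_pdico_dagger} and its transpose: the purely diagonal cross term $(\coeffm)^2_{I,I}$ is killed by $\mathbb{I}_{\ell^c}$, leaving a first-order term $(\coeffm)_{I,I}\hol_{I,I}(\pdico_I\transp\pdico_I)^{-1}$, its partner $(\pdico_I\transp\pdico_I)^{-1}\hol_{I,I}\transp(\coeffm)_{I,I}$, and a second-order term $(\pdico_I\transp\pdico_I)^{-1}\hol_{I,I}\transp\hol_{I,I}(\pdico_I\transp\pdico_I)^{-1}$. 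Each is handled as in the proof of Lemma~\ref{lemma_claim3}: peel the inner $(\pdico_I\transp\pdico_I)^{-1}$ off with Theorem~\ref{bmb} (using $\|(\pdico_I\transp\pdico_I)^{-1}\|\le4/3$ and $\|\hol_{I,I}(\pdico_I\transp\pdico_I)^{-1}\|\le\min\{7/3,2\delta/\nu\}$ on $\mathcal{G}$), split off the remaining $H$-correction via $(\pdico_I\transp\pdico_I)^{-1}R_Ie_\ell=R_Ie_\ell+(\pdico_I\transp\pdico_I)^{-1}H_{I,\ell}$ from~\eqref{neumann_Hright}, and estimate the resulting pieces with Corollary~\ref{the_corollary}\eqref{E_WRtRVt} and the $\indi_I(\ell)$-versions~\eqref{E_RtHHR_indl}, \eqref{E_RtNNtR_indl}, \eqref{E_RtNtNR_indl} of~\eqref{E_RtNNtR_indl_gen}. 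Every factor then carries a power of $\pi_\ell$ (the $\weights^{-1}$ already absorbed into those bounds) that cancels the $\pi_\ell^{-1}$ out front, while the $\alpha_\ell^{-1}$ prefactor is absorbed using $\Lambda\le\gamma\alphamin\nu/C$; the dominant contribution is the first-order term, for which Corollary~\ref{the_corollary}\eqref{E_WRtRVt} (the second summand vanishing since $\hol_{\ell\ell}=0$) gives $\le\Lambda\alphamin^{-1}\|\weights\hol e_\ell\|$ with, via~\eqref{hol_expressions}, $\|\weights\hol e_\ell\|\le(\eps^2/2)\|\dico\weights\|+\|Z\weights\|$. Feeding in $\|\dico\weights\|\le\gamma\alphamin\nu/(4C)$, $\Lambda\le\gamma\alphamin\nu/C$ and case-splitting on the two regimes exactly as in Lemmas~\ref{lemma_claim1}--\ref{lemma_claim3} (bounding $\|Z\weights\|\le\|\pdico\weights\|+\|\dico\weights\|$ when $\delta>\delta_\circ$ and $\|Z\weights\|\le\delta$ otherwise) yields $\|\E[\hat{Y}_n]\|\le\deltamin/32+\mathrm{const}\cdot\gamma\alphamin C^{-1}\min\{\delta_\circ,\delta\}\le\Delta/8$.

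Finally, the vector Bernstein inequality~\ref{bernstein} with $m=\Delta/8$, deviation $t=\Delta/16$ and $r=3\rho/4$, plus the usual simplification of the variance term, gives the stated bound $28\exp\!\big(-N(\Delta/16)^2/(2\rho^2+\rho\Delta/16)\big)$. I expect the main obstacle to be the bookkeeping in the third step: keeping straight how the projection $\mathbb{I}_{\ell^c}$, the two distinct diagonal rescalings $(D_{\sqrtpi\cdot\alpha})^{-1}$ and $(D_{\pi\cdot\alpha\cdot\coefficient})^{-1}e_\ell$, and the zero-padding maps $R_I$ interact, and matching each of the several sub-terms to the correct $\indi_I(\ell)$-variant of the Corollary~\ref{the_corollary}\eqref{E_RtNNtR_indl_gen} estimate so that all powers of $\pi_\ell$ cancel and every surviving factor is of order $\min\{\delta_\circ,\delta\}$.
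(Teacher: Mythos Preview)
Your proposal is correct and follows essentially the same route as the paper: the same random vectors $\hat Y_n$ (no centering needed thanks to $\mathbb{I}_{\ell^c}$), the same $\mathcal H/\mathcal G$ split, the same three-term decomposition of $\pdico_I^{\dagger}\dico_I\dico_I\transp(\pdico_I^{\dagger})\transp$ via~\eqref{dico_transp_pdico_dagger}/\eqref{c2_neumann}, and the same treatment of each piece via Neumann expansion, Theorem~\ref{bmb}, and the $\indi_I(\ell)$-variants of Corollary~\ref{the_corollary}. The only cosmetic difference is that the paper pushes the expectation bound to $\Delta/16$ (taking $m=t=\Delta/16$ in Bernstein) whereas you aim for $\Delta/8$ with $m=\Delta/8$, $t=\Delta/16$; both choices recover the stated threshold $3\Delta/16$ and the stated tail bound after the usual simplifications.
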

\begin{proof}
As usual we rewrite the vector to bound as sum of random vectors based on the signals $y_n$ and use Bernstein's inequality. Thus we define 
\begin{align}
\hat{Y}_n :&= \Lambda \cdot \mathbb{I}_{\ell^c} \,(D_{\sqrtpi  \cdot \alpha }  )^{-1}R_{\hat{I}_n}\transp \pdico_{\hat{I}_n}^{\dagger} y_n y_n \transp \pdico_{\hat{I}_n}^{\dagger \ast} R_{\hat{I}_n} (D_{\pi  \cdot \alpha \cdot \coefficient} )^{-1} \indi_{\mathcal{B}(y_n)}(\hat{I}_n) \, e_\ell \notag
\end{align}
and its counterpart $Y_n$ by simply replacing in the above $\hat I_n$ by $I_n$. Since for any diagonal matrix $D$ we have $\mathbb{I}_{\ell^c} \,D \,e_\ell = 0$ we can again obtain $\hat Y_n, Y_n$ from the corresponding matrices in the proof of Lemma~\ref{lemma_claim2}, this time by multiplying from the left by $\mathbb{I}_{\ell^c}$ and from the right by $\weights^{-1} e_\ell$.
Following the usual proof strategy we first bound the $\ell_2$-norm of the random vectors $\hat Y_n, Y_n$ as 
\begin{align*}
\max\{ \| \hat Y_n \|, \| Y_n \|\}  &\leq \kappa^2\|y \|^2 \|\coeffm^{-2}\|\|\coeff^{-1}\| \| \weightss^{-3/2}\| \leq 3 \rho/4 =: r,
\end{align*}
while for the expectation we get similar to \eqref{c1_EG_signs_coeff} and \eqref{c2_EGYa}
\begin{align}
    \| \E[\hat Y] \| %&\leq \P(\mathcal{H}^c)2\rho + \P(\mathcal{G}^c)\rho +\|\E[\indi_{\mathcal{G}}(I)Y]\| \notag\\
   &\leq \deltamin/32  + \Lambda \cdot \pi_{\ell}^{-1} \cdot \| D_{\alpha }^{-2}\| \cdot \big\| \E_{\mathcal{G}}\big [ \weights^{-1} \mathbb{I}_{\ell^c} R_I \transp \big(\pdico_I^{\dagger} \dico_I \dico_I \transp \pdico_I^{\dagger\ast}  \big)R_I  e_{\ell} \big] \big\|.
   \label{c4_EY}
\end{align}
Using a decomposition as in~\eqref{c2_neumann} we get
\begin{align}
     \mathbb{I}_{\ell^c} R_I \transp \big(\pdico_I^{\dagger} \dico_I \dico_I \transp \pdico_I^{\dagger\ast}  \big)R_I  e_{\ell}  = -\mathbb{I}_{\ell^c} R_I \transp &  \hol_{I,I}(\pdico_I \transp \pdico_I)^{-1} R_I e_{\ell} -  \mathbb{I}_{\ell^c} R_I \transp (\pdico_I \transp \pdico_I)^{-1}  \hol_{I,I} \transp \, R_I e_{\ell} \notag\\
    & + \mathbb{I}_{\ell^c} R_I \transp  (\pdico_I \transp \pdico_I)^{-1} \hol_{I,I}\transp\hol_{I,I}(\pdico_I \transp \pdico_I)^{-1}  R_I e_{\ell} .\label{c4_neumann}
\end{align}
The expectation corresponding to the first term can be obtained by replacing $\dico$ with $\weights^{-1}\mathbb{I}_{\ell^c}$ in \eqref{c3_EGY}. Going through \eqref{c3_b1} and \eqref{c3_b2} with the same change yields
\begin{align}
 \| \E_{\mathcal{G}}[\weights^{-1}\mathbb{I}_{\ell^c} R_I \transp   \hol_{I,I}&(\pdico_I \transp \pdico_I)^{-1} R_I e_{\ell}]\|\notag\\
 &\leq\pi_\ell \cdot ( \| \dico \weights\| \cdot \eps^2/2 +\| Z\weights\|+ 4  \delta   \cdot \|\pdico \weights \|). \label{c4_1a}
\end{align}
To estimate the second term note that $\hol_{I,I} \transp \, R_I e_{\ell} = R_I \mathbb{I}_{\ell^c} \hol \transp e_\ell \indi_I(\ell )$ since $\hol$ has a zero diagonal. So using the identity \eqref{neumann_Hleft}, Theorem~\ref{bmb}, Corollary~\ref{the_corollary}\eqref{E_WRtRVt} and \eqref{E_RtNNtR_indl_gen} or rather \eqref{E_RtHHR_indl} we get for the second term
\begin{align}
     \| \E_{\mathcal{G}}[\weights^{-1} &\mathbb{I}_{\ell^c} R_{I} \transp (\pdico_I \transp \pdico_I)^{-1}  \hol_{I,I} \transp R_I  \, e_{\ell} ] \|  \notag\\
     &\leq \| \E_{\mathcal{G}}[\weights^{-1} \, \mathbb{I}_{\ell^c}\, R_{I} \transp (\pdico_I \transp \pdico_I)^{-1} R_I \mathbb{I}_{\ell^c} \weights^{-1} \indi_I(\ell )]\| \cdot \|\weights \hol \transp e_\ell \| \nonumber \\
     &\leq \eps \cdot \| \pdico \weights\| \cdot \big(\| \E_{\mathcal{G}}[\weights^{-1} \mathbb{I}_{\ell^c}\, R_{I} \transp  R_I \mathbb{I}_{\ell^c}\weights^{-1}\,\indi_I(\ell ) ] \| \notag\\
     & \hspace{3cm} + \| \E_{\mathcal{G}}[\weights^{-1} \mathbb{I}_{\ell^c} R_{I} \transp H_{I,I} \cdot (\pdico_I \transp \pdico_I)^{-1} \cdot \mathbb{I}_{\ell^c} R_I \weights^{-1}\,\indi_I(\ell ) ] \| \big)  \nonumber \\
      &\leq \delta \cdot \|\pdico \weights\| \cdot \big(\pi_\ell + \| \E[\weights^{-1} \mathbb{I}_{\ell^c} R_{I} \transp H_{I,I} H_{I,I}\transp R_{I}\mathbb{I}_{\ell^c} \weights^{-1}\indi_I(\ell ) ]\|^{1/2} \cdot 4/3 \cdot \sqrt{\pi_\ell}\big) \nonumber\\
      &\leq \pi_\ell\cdot \delta \cdot \|\pdico \weights\| \cdot \big(1 + 4\cdot \max\{\mu(\pdico),\| \pdico \weights\| \} ).\label{c4_1b}
\end{align}
As probably feared the third term in \eqref{c4_neumann} requires further decomposition. Again we split the inverse  $(\pdico_I \transp \pdico_I)^{-1}$ into $(\pdico_I \transp \pdico_I)^{-1} = \mathbb{I} + (\pdico_I \transp \pdico_I)^{-1} H_{I,I} =  \mathbb{I} +  H_{I,I}(\pdico_I \transp \pdico_I)^{-1}$. Recalling that for $I \in \mathcal{G}$ we have $\|\hol_{I,I} (\pdico_I \transp \pdico_I)^{-1}  \|\leq \min \{7/3, 2\delta/\nu\}=\Gamma$ and applying Theorem~\ref{bmb} to all four resulting terms yields
\begin{align}
    \| \E_\mathcal{G} [ \weights^{-1} &\mathbb{I}_{\ell^c} R_I \transp  \hol_{I,I}\transp \cdot \hol_{I,I}  R_I \, e_{\ell} ] \| \notag \\
   &\leq \| \E [\weights^{-1}  \mathbb{I}_{\ell^c} R_I \transp \hol_{I,I}\transp \hol_{I,I} R_I   \mathbb{I}_{\ell^c} \weights^{-1} \indi_I(\ell ) ]\|^{1/2} \cdot \| \E [ \hol_{I,\ell}\transp  \hol_{I,\ell} \indi_I(\ell )]\|^{1/2} \nonumber \\
    \| \E_\mathcal{G} [ \weights^{-1} &\mathbb{I}_{\ell^c}  R_I \transp H_{I,I} \cdot (\pdico_I \transp \pdico_I)^{-1}  \hol_{I,I} \transp\cdot  \hol_{I,I} R_I \, e_{\ell} ] \| \notag\\
    & \leq 
    \| \E[ \weights^{-1}   \mathbb{I}_{\ell^c}  R_I \transp H_{I,I}H_{I,I}\transp R_I  \mathbb{I}_{\ell^c}  \weights ^{-1} \indi_I(\ell ) ]\|^{1/2}   
    \cdot \Gamma \cdot \|\E [ \hol_{I,\ell}\transp  \hol_{I,\ell} \indi_I(\ell )]\|^{1/2}  \nonumber \\
     \| \E_\mathcal{G}[ \weights^{-1}  &\mathbb{I}_{\ell^c}  R_I \transp \hol_{I,I} \transp \cdot \hol_{I,I} (\pdico_I \transp \pdico_I)^{-1} \cdot H_{I,I}  R_I  e_{\ell} ] \| \notag\\
    &\leq 
     \| \E[ \weights^{-1}   \mathbb{I}_{\ell^c}  R_I \transp \hol_{I,I} \transp \hol_{I,I} R_I  \mathbb{I}_{\ell^c}  \weights ^{-1} \indi_I(\ell ) ]\|^{1/2} 
    \cdot \Gamma \cdot  \| \E[ H_{I,\ell}\transp H_{I,\ell} \indi_I(\ell ) ]\|^{1/2} \nonumber \\
    \| \E_\mathcal{G} [\weights^{-1} &\mathbb{I}_{\ell^c}   R_I \transp   H_{I,I}\cdot ( \pdico_I \transp \pdico_I)^{-1}\hol_{I,I} \transp \hol_{I,I} ( \pdico_I \transp \pdico_I)^{-1}\cdot H_{I,I} R_I  e_{\ell} ] \| \notag\\
& \leq   \| \E[ \weights^{-1}   \mathbb{I}_{\ell^c}  R_I \transp H_{I,I}H_{I,I}\transp R_I  \mathbb{I}_{\ell^c}  \weights ^{-1} \indi_I(\ell )  ]\|^{1/2}   \cdot \Gamma^2
\cdot  \| \E[ H_{I,\ell}\transp H_{I,\ell} \indi_I(\ell ) ]\|^{1/2} \notag
\end{align}
Bounding the terms on the left hand side via Corollary~\ref{the_corollary}\eqref{E_RtNNtR_indl_gen} or rather 
\eqref{E_RtHHR_indl} and \eqref{E_RtNtNR_indl} and the terms on the right hand side via Corollary~\ref{the_corollary}\eqref{E_WRtRVt} we get 
\begin{align}
    \| \E_\mathcal{G} [ &\weights^{-1}\mathbb{I}_{\ell^c} R_I \transp  (\pdico_I \transp \pdico_I)^{-1} \hol_{I,I}\transp\hol_{I,I}(\pdico_I \transp \pdico_I)^{-1}  R_I e_{\ell}]\| \notag \\
    &\leq \pi_ \ell \cdot \big(3\delta + 3\Gamma \max\{\mu(\pdico),\|\pdico \weights \|\} \big)  \big(\|e_\ell\transp \hol \weights \| + \Gamma \|e_\ell\transp H\weights \|\big)\notag \\
    &\leq \pi_ \ell \cdot \big(3\delta + 3\Gamma \max\{\mu(\pdico),\|\pdico \weights \|\} \big) \big(\|\dico \weights\| \, \eps^2/2 + \|Z\weights\| + \Gamma \|\pdico \weights\|\big). \label{c4_1c}
\end{align}
Finally substituting \eqref{c4_1a}-\eqref{c4_1c} into \eqref{c4_EY} yields
\begin{align*}
   \| \E [\hat{Y}] \| \leq \frac{\deltamin}{32}  & +  \frac{\Lambda}{\alphamin^2} \Big( \| \dico \weights\| \, \frac{\eps^2}{2} +\| Z\weights\| +  \delta \, \|\pdico \weights\| \, \big(5+ 4 \max\{\mu(\pdico),\| \pdico \weights\| \} \big)\Big)\notag \\
  &  + \frac{3\Lambda}{\alphamin^2}  \Big(\delta + \Gamma \max\{\mu(\pdico),\|\pdico \weights \|\} \Big)  \Big(\|\dico \weights\| \, \frac{\eps^2}{2} + \|Z\weights\| + \Gamma \|\pdico \weights\|\Big).
\end{align*}
As before we distinguish between $\delta> \delta_\circ$, where $\min\{\delta, \delta_\circ\} = \delta_\circ = \gamma \nu^2/C$ and we can use 
$\|\pdico \weights\|\leq \alphamin \gamma \nu/(4C)=\Lambda$, $\mu(\pdico)\leq \alphamin \gamma \nu^2/(4C)$ as well as $\|Z \weights \| \leq \alphamin \gamma \nu/(2C)$ and $\Gamma \leq 7/3$, to get
\begin{align*}
   \| \E [\hat{Y}] \| \leq \frac{\deltamin}{32}  & +  \frac{ \gamma^2 \nu^2}{16C^2}  \Big[ 3 + \sqrt{2} \Big(5+ \frac{ \alphamin \gamma \nu}{C} \Big) + 16 \Big(\sqrt{2} +  \frac{ 7\alphamin \gamma \nu}{12C} \Big)\Big]\notag \leq \frac{\deltamin}{32}   +  \delta_\circ \frac{36 \gamma}{16C} \leq \frac{\Delta}{16},
\end{align*}
and $\delta\leq  \delta_\circ$, where $\min\{\delta, \delta_\circ\} = \delta$ and we can use
$\|\pdico \weights\|\leq \alphamin \gamma \nu/C = \Lambda$ and $\mu(\pdico)\leq 9 \nu^2/ (4C)$, meaning $\max\{\mu(\pdico),\| \pdico \weights\| \}\leq \nu\leq 1$ as well as $\|Z \weights \| \leq \delta $ and $\Gamma \leq 2\delta/\nu$, to again get
\begin{align*}
   \| \E [\hat{Y}] \| \leq \frac{\deltamin}{32}  & + \delta  \frac{\gamma \nu}{\alphamin C} \Big[1 + 9\delta + \frac{ 9\alphamin \gamma }{8C} \big(8\nu + \delta \nu + \delta^2\nu  + 16\delta \big)\Big]\leq \frac{\deltamin}{32} + \delta  \frac{2\gamma \nu}{\alphamin C}\leq \frac{\Delta}{16}.
\end{align*}
As before an application of the vector Bernstein inequality~\ref{bernstein} for $t= m = \Delta /16$ and $r = 3 \rho/4$ and some simplifications yield the desired bound.
\end{proof}
Note that for the probability bounds in the last four lemmas, we have used the simplified version of the vector/matrix Bernstein inequality from \cite{tr12}, stated in ~\ref{bernstein}, which bounds the variance appearing in the denominator via the quantity $r$. Using the same approach as above one could estimate terms of the form $\|\tilde Y_n \tilde Y_n\transp \|$ and $\|\tilde Y_n\transp \tilde Y_n \|$, where $\tilde Y_n$ is the centered version of $Y_n$, to improve the bounds on the variance and thus lower the number of training signals required in the main theorem, \cite{MINSKER2017111,tr12}.

%%%%%%%%%%%%%%
% !TEX root = mother_file.tex
%%%%%%%%%%%%%%

%%%%%%%%%%%%%%
% !TEX root = mother_file.tex
%%%%%%%%%%%%%%

%%%%%%%%%%%%%%

\section{Technical results}\label{app:technical}
For convenience we first restate several bounds for random matrices used in the proofs of Lemma~\ref{prob_bounds} as well as Lemma~\ref{lemma_claim1}-\ref{lemma_claim4}. We then provide the proofs of Lemma~\ref{bmb} and Corollary~\ref{the_corollary} together with the detailed derivation of (\ref{E_RtHHR}-\ref{E_RtNtNR}) and (\ref{E_RtHHR_indl}-\ref{E_RtNtNR_indl}).

\subsection{Random matrix bounds} \label{sec:supp_rand_mat}
We first recall two bounds for sums of random matrices and vectors.
\begin{theorem}[Matrix Chernoff inequality~\cite{tr12}]\label{chern}
Let $X_1,...,X_N$ be independent random positive semi-definite matrices taking values in $\R^{d \times d}$. Assume that for all $n \in \{1,..., N\}$, $\| X_n \| \leq \eta$ a.s. and $\|\sum_{n = 1}^N \E[X_n] \| \leq \mu_{max}$. Then, for all $t \geq e \mu_{max}$, 
\[
\P\left( \| \sum_{n=1}^N X_n \| \geq t \right) \leq K \left( \frac{e \mu_{max}}{t} \right)^{\frac{t}{\eta}}.
\]
\end{theorem}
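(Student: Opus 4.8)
The statement is Tropp's matrix Chernoff bound, so one option is simply to invoke \cite{tr12}; for completeness I sketch the standard matrix-Laplace-transform proof. Since each $X_n$ is positive semi-definite, so is $S:=\sum_{n=1}^N X_n$, and hence $\|S\| = \lambda_{\max}(S)$. The starting point is the matrix Markov/Laplace bound: for any $\theta>0$,
\[
\P\big(\lambda_{\max}(S)\ge t\big) \le e^{-\theta t}\,\E\big[\tr\exp(\theta S)\big],
\]
which follows by applying scalar Markov to $\exp(\theta\lambda_{\max}(S)) = \lambda_{\max}(\exp(\theta S)) \le \tr\exp(\theta S)$, using monotonicity of $x\mapsto e^{\theta x}$ and of the spectral mapping.

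The one genuinely nontrivial ingredient is to control $\E[\tr\exp(\theta S)]$ for a sum of independent matrices. By Lieb's concavity theorem (equivalently, subadditivity of the matrix cumulant generating function),
\[
\E\Big[\tr\exp\big(\textstyle\sum_n\theta X_n\big)\Big] \le \tr\exp\Big(\textstyle\sum_n \log\E\big[e^{\theta X_n}\big]\Big),
\]
which I would take as a black box. Each factor is then estimated using $0\preceq X_n\preceq\eta\,\mathbb{I}$ (from positive semi-definiteness and $\|X_n\|\le\eta$): convexity of $x\mapsto e^{\theta x}$ on $[0,\eta]$ gives the operator inequality $e^{\theta X_n}\preceq \mathbb{I}+\eta^{-1}(e^{\theta\eta}-1)X_n$, hence $\E[e^{\theta X_n}]\preceq \mathbb{I}+\eta^{-1}(e^{\theta\eta}-1)\E[X_n]$, and operator monotonicity of the logarithm with $\log(1+x)\le x$ yields $\log\E[e^{\theta X_n}]\preceq \eta^{-1}(e^{\theta\eta}-1)\E[X_n]$.

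Summing these operator inequalities and using that $A\preceq B$ forces $\lambda_{\max}(A)\le\lambda_{\max}(B)$ and that $\tr\exp$ is monotone, together with $\|\sum_n\E[X_n]\|\le\mu_{\max}$, gives $\E[\tr\exp(\theta S)]\le K\exp\!\big(\eta^{-1}(e^{\theta\eta}-1)\mu_{\max}\big)$, where $K$ is the ambient dimension. Substituting into the Laplace bound and optimising over $\theta$ with the choice $\theta=\eta^{-1}\log(t/\mu_{\max})$ — which is admissible precisely because $t\ge e\mu_{\max}>\mu_{\max}$ — produces $K\exp\!\big(\eta^{-1}(t-\mu_{\max})-\eta^{-1}t\log(t/\mu_{\max})\big)$, and weakening $e^{-\mu_{\max}/\eta}\le 1$ collapses this to $K\,(e\mu_{\max}/t)^{t/\eta}$, as claimed. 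The main obstacle is Lieb's concavity theorem / the master tail inequality for independent matrix sums; granting that, everything else is routine scalar-type estimation transported to the matrix setting via operator monotonicity, and the hypothesis $t\ge e\mu_{\max}$ is exactly what keeps the optimiser $\theta$ positive and the bound below the trivial value $K$.
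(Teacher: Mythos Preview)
The paper does not prove this theorem at all: it is restated in Appendix~\ref{sec:supp_rand_mat} purely for convenience and attributed to \cite{tr12}, with no accompanying argument. Your sketch is correct and is precisely the standard matrix-Laplace-transform proof from Tropp's paper (Markov on $\tr\exp(\theta S)$, Lieb's concavity to decouple the summands, the operator convexity bound $e^{\theta X_n}\preceq \mathbb{I}+\eta^{-1}(e^{\theta\eta}-1)X_n$, and the optimisation $\theta=\eta^{-1}\log(t/\mu_{\max})$), so you are in fact supplying more than the paper does. One cosmetic remark: in the paper's statement the matrices are $d\times d$ but the dimensional prefactor is written as $K$; your identification of that constant as the ambient dimension is the right reading.
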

\begin{theorem}[Matrix resp. vector Bernstein inequality~\cite{tr12, MINSKER2017111}]\label{bernstein}
Consider a sequence $Y_1,...,Y_N$ of independent, random matrices (resp. vectors) with dimension $d \times K$ (resp. $d$). Assume that each random matrix (resp. vector) satisfies
\[
\|Y_n \| \leq r \quad \text{a.s.} \quad \text{and} \quad \|\E[Y_n] \| \leq m.
\]
Then, for all $t>0$,
\begin{align}
    \P \left(\| \frac{1}{N}\sum_{n=1}^N Y_n \| \geq m + t   \right) \leq \kappa  \exp \left( \frac{-N t^2}{2 r^2 + (r+m)t}\right),
\end{align}
where $\kappa = d + K$ for the matrix Bernstein inequality and $\kappa = 28$ for the vector Bernstein inequality.
\end{theorem}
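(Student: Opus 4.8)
The plan is to derive this simplified one-sided statement from the standard two-sided, zero-mean matrix Bernstein inequality of \cite{tr12} (in the matrix case) and its dimension-free vector analogue of \cite{MINSKER2017111} (in the vector case). First I would center the summands: put $\tilde Y_n := Y_n - \E[Y_n]$, so that the $\tilde Y_n$ are independent with mean zero. Writing $\frac1N\sum_n Y_n = \frac1N\sum_n\tilde Y_n + \frac1N\sum_n\E[Y_n]$ and using $\|\frac1N\sum_n\E[Y_n]\| \le \frac1N\sum_n\|\E[Y_n]\| \le m$, the triangle inequality yields the inclusion $\{\,\|\frac1N\sum_n Y_n\|\ge m+t\,\}\subseteq\{\,\|\frac1N\sum_n\tilde Y_n\|\ge t\,\}$, so it suffices to bound $\P\bigl(\|\sum_n\tilde Y_n\| \ge Nt\bigr)$.

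Next I would read off the two quantities that feed the cited inequality: a uniform almost-sure bound $L$ on $\|\tilde Y_n\|$ and a bound $\sigma^2$ on the matrix variance statistic $\max\{\|\sum_n\E[\tilde Y_n\tilde Y_n\transp]\|,\|\sum_n\E[\tilde Y_n\transp\tilde Y_n]\|\}$ (resp.\ on $\sum_n\E\|\tilde Y_n\|^2$ in the vector case). From $\|Y_n\|\le r$ and $\|\E[Y_n]\|\le m$ one gets $\|\tilde Y_n\|\le r+m=:L$. For the variance, $\E[\tilde Y_n\tilde Y_n\transp]=\E[Y_nY_n\transp]-\E[Y_n]\E[Y_n]\transp\preceq\E[Y_nY_n\transp]\preceq r^2\mathbbm{I}$, since $Y_nY_n\transp\preceq\|Y_n\|^2\mathbbm{I}\preceq r^2\mathbbm{I}$ almost surely; the analogous bound holds for $\E[\tilde Y_n\transp\tilde Y_n]$, and in the vector case $\E\|\tilde Y_n\|^2\le\E\|Y_n\|^2\le r^2$. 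Hence one may take $\sigma^2=Nr^2$.

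Finally I would invoke the standard inequality in the form $\P(\|\sum_n\tilde Y_n\|\ge s)\le\kappa\exp\!\bigl(\tfrac{-s^2/2}{\sigma^2+Ls/3}\bigr)$ — with $\kappa=d+K$ for matrices (\cite{tr12}) and $\kappa=28$ for vectors (\cite{MINSKER2017111}) — at $s=Nt$, $L=r+m$, $\sigma^2=Nr^2$, obtaining $\kappa\exp\!\bigl(\tfrac{-Nt^2/2}{r^2+(r+m)t/3}\bigr)=\kappa\exp\!\bigl(\tfrac{-Nt^2}{2r^2+\frac23(r+m)t}\bigr)$, and then loosen the denominator via $\frac23(r+m)t\le(r+m)t$ to reach the stated bound. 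There is no genuine analytic obstacle here; the only points needing care are quoting the cited theorems with exactly the right prefactors and variance normalisation, and checking that the two constant relaxations (replacing twice the true variance by $2r^2$, and $\frac23(r+m)t$ by $(r+m)t$) only weaken the estimate — this is incidentally precisely the place where the bound is lossy, since retaining $\sum_n\E[\tilde Y_n\tilde Y_n\transp]$ rather than $Nr^2$ would give the sharper variance term alluded to in the note following Lemma~\ref{lemma_claim4}.
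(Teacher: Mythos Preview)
The paper does not actually prove this theorem: it is simply restated in Appendix~\ref{app:technical} as a known result from \cite{tr12,MINSKER2017111}, alongside the matrix Chernoff inequality, with no accompanying argument. Your derivation --- centering, bounding $\|\tilde Y_n\|\le r+m$ and the variance by $Nr^2$, then applying the standard Bernstein bound and relaxing the constant in the denominator --- is a correct and standard way to arrive at this simplified formulation from the cited sources, and your closing remark about the lossy variance bound matches the paper's own comment after Lemma~\ref{lemma_claim4}.
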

Next we recall Theorem~3.1 from \cite{rusc21} which allows us to control the operator norm of a submatrix with high probability.
\begin{theorem}[Operator norm of a random submatrix~\cite{rusc21}] \label{them:opnorm}
Let $ \pdico$ be a dictionary and assume $I \subseteq\mathbb{K}$ is chosen according to the rejective sampling model with probabilities $p_1, \dots , p_K$ such that $\sum_{i = 1}^K p_i = S$. Further let $D_{p}$ denote the diagonal matrix with the vector $p$ on its diagonal. Then  
\[
\P \left(  \| \pdico_I\transp \pdico_I - \mathbbm{I} \| > \vartheta \right) \leq 216 K \exp{\left(- \min\left\{ \frac{\vartheta^2}{4e^2 \|  \pdico  D_{p} \pdico \transp\| } , \frac{\vartheta}{ 2 \mu(\pdico) } \right\}\right)}.
\]
\end{theorem}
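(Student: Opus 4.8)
The plan is to reduce the rejective model to independent sampling, to reformulate the quantity as the operator norm of a random principal submatrix of the hollow Gram matrix of $\pdico$, and then to control that matrix-valued quadratic chaos by decoupling followed by a conditional matrix-concentration estimate.

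First I would invoke the Poissonisation trick --- the same $\P_S(\mathcal{A})\le 2\,\P_B(\mathcal{A})$ device used in the proof of Lemma~\ref{prob_bounds}, cf.\ \cite{rusc21} --- to pass from the rejective measure $\P_S$ to the Poisson sampling measure $\P_B$, under which $I=\{k:\delta_k=1\}$ with the $\delta_k$ independent $\mathrm{Bernoulli}(p_k)$; this costs only a universal constant. Setting $P=\diag(\delta_1,\dots,\delta_K)$ and $H=\pdico\transp\pdico-\mathbbm{I}$, which is hollow since the atoms have unit norm, one has the exact identity $\|\pdico_I\transp\pdico_I-\mathbbm{I}\|=\|H_{I,I}\|=\|PHP\|$, reading $PHP$ as a $K\times K$ matrix supported on the $I\times I$ block. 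Thus the whole statement reduces to a tail bound for $\|PHP\|$.

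The core difficulty is the \emph{quadratic} dependence of $PHP$ on the selection variables: applying the matrix Chernoff bound directly to $\sum_{k\in I}\patom_k\patom_k\transp=\pdico_I\pdico_I\transp$ only reaches $\lambda_{\max}(\pdico_I\transp\pdico_I)$ and cannot touch the smallest eigenvalue (which is merely $0$ once $|I|<d$, as $\pdico_I$ is rectangular), so it misses the genuine off-diagonal interaction. Instead I would apply a decoupling inequality for matrix-valued chaos to bound $\|PHP\|$ by a universal constant times $\|PHP'\|$ with $P'$ an independent copy of $P$, and then condition on $P'$. Given $P'$ the matrix $M:=HP'$ is deterministic and $PM=\sum_k\delta_k\,e_k(e_k\transp M)$ is a sum of \emph{independent} random matrices; an application of the matrix concentration inequalities recalled in Theorems~\ref{chern} and~\ref{bernstein}, after passing to the Hermitian dilation, then yields a conditional tail of the shape $c_1 K\exp\!\big(-c_2\min\{\vartheta^2/v,\ \vartheta/b\}\big)$ on a high-probability slice of the $P'$-randomness. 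One checks that, after averaging out $P'$, the variance proxy $v$ collapses to $\|\pdico D_p\pdico\transp\|=\|\sum_k p_k\patom_k\patom_k\transp\|$ and the almost-sure summand bound $b$ collapses to the coherence $\mu(\pdico)$ --- this is where the structure of $H$ (zero diagonal, entries bounded by $\mu(\pdico)$, columns controlled through $\pdico D_p\pdico\transp$) enters. Finally a union bound over the upper and the lower deviation, multiplied by the Poissonisation and decoupling constants and the dilation, collapses to the claimed $216\,K\exp(-\min\{\vartheta^2/(4e^2\|\pdico D_p\pdico\transp\|),\ \vartheta/(2\mu(\pdico))\})$.

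The main obstacle is exactly this decoupling of the matrix chaos $PHP$ and the subsequent bookkeeping: one must verify that the conditional variance of the decoupled, $\delta$-linear sum really does reduce to $\|\pdico D_p\pdico\transp\|$ and that the summand bound reduces to $\mu(\pdico)$, and then propagate the several benign but non-negligible universal constants --- from Poissonisation, from decoupling, from the Hermitian dilation, and from the two-sided union bound --- through the estimate so as to land on precisely $216$ and $4e^2$ rather than on constants merely of the same order. A secondary subtlety is the degenerate regime $|I|\in\{0,1\}$, where $\|\pdico_I\transp\pdico_I-\mathbbm{I}\|$ vanishes or is a single off-diagonal entry; the chain above must still yield a valid (if frequently vacuous) bound there.
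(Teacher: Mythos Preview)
The paper does not prove this statement: Theorem~\ref{them:opnorm} is merely \emph{restated} from \cite{rusc21} (it is introduced as ``Next we recall Theorem~3.1 from \cite{rusc21}\ldots''), so there is no proof in the present paper to compare your proposal against.

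That said, your sketch is a reasonable outline of how such results are typically obtained, and in particular the Poissonisation step and the rewriting $\|\pdico_I\transp\pdico_I-\mathbbm{I}\|=\|PHP\|$ with $H$ the hollow Gram matrix are exactly right. The point where your proposal is still genuinely incomplete is the claim that, after decoupling and conditioning on $P'$, the conditional variance proxy ``collapses to $\|\pdico D_p\pdico\transp\|$''. Concretely, writing $PHP'=\sum_k\delta_k\,e_k(e_k\transp HP')$, the two variance terms in the matrix Bernstein inequality are $\max_k p_k\|e_k\transp HP'\|_2^2$ and $\|P'HD_pHP'\|$, neither of which is $\|\pdico D_p\pdico\transp\|$ on the nose; one still has to control these \emph{random} quantities in terms of $\|\pdico D_p\pdico\transp\|$ on a high-probability event in $P'$, which is itself a second application of matrix concentration and is where the specific constants $4e^2$ and $216$ are actually produced. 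You flag this as the main obstacle, which is accurate, but the proposal does not yet indicate how that reduction goes through. If you want to reconstruct the argument, the cited reference \cite{rusc21} is the place to look.
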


To prove Theorem~\ref{bmb} we first state and prove the following lemma to bound sums of products of matrices.
\begin{lemma}[Sums of products of matrices ~\cite{sing_values_1},~\cite{sing_values_2}]
Let $A_n \in \R^{d_1 \times d_2}$, $B_n \in \R^{d_2 \times d_3}$, $C_n \in \R^{d_3 \times d_4}$. Then
\[
\left\| \sum_{n = 1 }^N A_n B_n C_n \right\| \leq \left\| \sum_{n = 1}^N A_n A_n\transp \right\|^{1/2} \max_{n}\| B_n \| \left\| \sum_{n = 1}^N C_n \transp C_n\right\|^{1/2}.
\]
\end{lemma}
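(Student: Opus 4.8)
The plan is to reduce the sum of products to a single product of three conformable block matrices, for which the asserted bound is nothing more than submultiplicativity of the operator norm. Concretely, I would assemble the horizontal concatenation $\mathbf{A} := (A_1 \; A_2 \; \cdots \; A_N) \in \R^{d_1 \times N d_2}$, the block-diagonal matrix $\mathbf{B} := \diag(B_1, \ldots, B_N) \in \R^{N d_2 \times N d_3}$, and the vertical concatenation $\mathbf{C} := (C_1\transp \; C_2\transp \; \cdots \; C_N\transp)\transp \in \R^{N d_3 \times d_4}$. A direct block computation then gives $\mathbf{A}\mathbf{B}\mathbf{C} = \sum_{n=1}^N A_n B_n C_n$, since the block-diagonal $\mathbf{B}$ pairs the $n$-th block of columns of $\mathbf{A}$ with the $n$-th block of rows of $\mathbf{C}$.

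Next I would record the three auxiliary identities that make the block norms match the quantities in the statement: $\mathbf{A}\mathbf{A}\transp = \sum_n A_n A_n\transp$, $\mathbf{C}\transp\mathbf{C} = \sum_n C_n\transp C_n$, and $\|\mathbf{B}\| = \max_n \|B_n\|$, the last because a block-diagonal matrix acts independently on the corresponding mutually orthogonal coordinate subspaces, so its set of singular values is the union of the singular values of its blocks. Combining these with $\|\mathbf{A}\| = \|\mathbf{A}\mathbf{A}\transp\|^{1/2}$ and $\|\mathbf{C}\| = \|\mathbf{C}\transp\mathbf{C}\|^{1/2}$, the estimate $\|\mathbf{A}\mathbf{B}\mathbf{C}\| \leq \|\mathbf{A}\|\,\|\mathbf{B}\|\,\|\mathbf{C}\|$ yields exactly $\big\|\sum_n A_n B_n C_n\big\| \leq \big\|\sum_n A_n A_n\transp\big\|^{1/2} \cdot \max_n\|B_n\| \cdot \big\|\sum_n C_n\transp C_n\big\|^{1/2}$.

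There is essentially no hard step here; the only points requiring a moment's care are the block-diagonal norm identity and bookkeeping of the dimensions so that the three concatenations are conformable and the products land in the correct spaces. One could alternatively argue directly at the level of bilinear forms, bounding $\langle u,\, (\sum_n A_n B_n C_n) v\rangle = \sum_n \langle A_n\transp u,\, B_n C_n v\rangle$ by Cauchy--Schwarz in $n$ after pulling out $\max_n\|B_n\|$, but the block-matrix packaging is cleaner and makes the two half-powers appear transparently, so I would go with that.
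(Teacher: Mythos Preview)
Your proposal is correct and matches the paper's proof essentially verbatim: the paper likewise assembles the row-block matrix $(A_1\;A_2\;\cdots)$, the block-diagonal $\diag(B_1,\ldots,B_N)$, and the column-block $(C_1;\,C_2;\,\cdots)$, and then invokes $\|ABC\|\leq\|A\|\|B\|\|C\|$ together with $\|A\|=\|AA\transp\|^{1/2}$ and $\|C\|=\|C\transp C\|^{1/2}$. The only cosmetic difference is that the paper zero-pads the outer factors into square block arrays, which changes nothing.
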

\begin{proof}
Write
\begin{equation*}
\sum_{n = 1 }^N A_n B_n C_n =  
\begin{pmatrix}
 A_1 & A_2 & A_3 & . \\ . & . & & \\ .&&.& \\ .&&& . 
\end{pmatrix} 
\begin{pmatrix}
 B_1 & . & . & . \\ . & B_2 & & \\ .&&B_3& \\ .&&& .  
\end{pmatrix} 
\begin{pmatrix}
 C_1 & . &  .&  .\\  C_2 &. & &\\ C_3 & &. & \\ . & & & .
\end{pmatrix} .
\end{equation*}
Now the result immediately follows by applying the following properties of the operator norm $\| A B C\| \leq \|A \|\|B \|\| C\|  $, $\|A \| =  \| A A\transp \|^{1/2} $ and $\| C \| = \| C \transp C \|^{1/2}$.
\end{proof}
Lemma~\ref{bmb} is a straightforward consequence of the result above.
\begin{proof}[of Lemma~\ref{bmb}]
We want to show that for random matrices $A(I) \in \R^{d_1 \times d_2}$, $B(I) \in \R^{d_2 \times d_3}$, $C(I)\in \R^{d_3 \times d_4}$, where $I$ is a discrete random variable taking values in $\mathcal{I}$ and $\mathcal{G}\subseteq \mathcal{I}$ with $\max_{I\in\mathcal{G}} \|B(I)\| \leq \Gamma$ we have
 \begin{align*}
\|\E \left[ A(I) \cdot B(I) \cdot C(I) \cdot \indi_{\mathcal G}(I) \right] \| \leq \| \E \left[ A(I) A(I)\transp \right] \|^{1/2} \cdot \Gamma \cdot \| \E \left[ C(I) \transp C(I) \right] \|^{1/2}.
\end{align*}
Rewriting the expectation as a sum and applying the lemma above yields
\begin{align*}
\|\E \left[ A(I) B(I) C(I) \indi_{\mathcal G}(I) \right] \| &= \| \sum_{I \in \mathcal{G}}\P[I]^{1/2} A(I) B(I) C(I) \P[I]^{1/2}\|\\
& \leq \| \sum_{I \in \mathcal{G}}\P[I] A(I) A(I)\transp\|^{1/2} \cdot \Gamma \cdot \| \sum_{I \in \mathcal{G}}\P[I] C(I)\transp C(I)\|^{1/2}\\
&\leq \| \E \left[ A(I) A(I)\transp \right] \|^{1/2} \cdot\Gamma\cdot \| \E \left[ C(I) \transp C(I) \right] \|^{1/2},
\end{align*}
where in the last inequality we have used that the matrices $A(I) A(I)\transp$ and $C(I)\transp C(I)$ are positive semidefinite and that $\P[I]\geq 0$.
\end{proof}
\subsection{Proof of Corollary~\ref{the_corollary}}\label{sec:supp_proof_corollary}
Finally, we will turn to the proof of Corollary~\ref{the_corollary}. Note that in the uniformly distributed support model, which is equivalent to rejective sampling with uniform weights $p_i = S/K$, most estimates become trivial, since we have $\pi_i = \P(i \in I) = S/K =p_i$ and $\P(\{i,j\} \subseteq I) = \frac{S(S-1)}{K(K-1)}$.
This means that for a zero diagonal matrix $\hol$ we have
\begin{align}
    \E[ R_I \transp \hol_{I,I} R_I] = \E[ \hol \odot (\one_I \one_I \transp)]=  \hol \odot\E[\one_I \one_I \transp]=\hol \cdot \frac{S(S-1)}{K(K-1)}, \notag
\end{align}
so we simply get 
\begin{align}
  \| \E[ \weights^{-1}  R_I\transp \hol_{I,I} R_I  \weights ^{-1}]\| =  \| \hol \|  \cdot  \frac{S-1}{K-1} \leq  \| \hol \| \cdot  \frac{S}{K}  = \| \weights \hol \weights\|.\notag
\end{align}
Unfortunately, in the rejective sampling model with non-uniform weights $p_i$, these estimates become much more involved. In particular, we will heavily rely on the following theorem, collecting results from \cite{rusc22}.

\begin{theorem}\label{th_poisson}
Let $\P_B$ be the probability measure corresponding to the Poisson sampling model with weights $p_i < 1$ and $\P_S$ be the probability measure corresponding to the associated rejective sampling model with parameter $S$, $\P_S(I) = \P_B(I \mid |I|=S)$,  as in Definition~\ref{def_prob_models}. Further denote by $\E_S$ the expectation with respect to $\P_S$ and by $\pi_S$ the vector of first order inclusion probabilities of level $S$, meaning $\pi_S(i) = \P_S( i \in I)$ or equivalently $\pi_S = \E_S ( \one_I)$. 
We have
\begin{align*}
(1-\|p\|_\infty) \cdot p_i &\leq \pi_S(i) \leq 2 \cdot p_i,  \quad \text{if} \quad \textstyle{\sum_{k}} \,p_k = S, \tag{a} \label{p_leq_pi} \\ 
\pi_{S-1}(i) &\leq  \pi_S(i), \tag{b} \label{pi_Sm1_leq}\\
  \P_S( \{i,j\}\subseteq I) &\leq \pi_S(i) \cdot \pi_S(j), \quad \text{if} i \neq j .\tag{c}
\label{pi_ij_leq}
 \end{align*}
Further, defining for $L\subseteq [K]$ with $|L|<S$ the set $\mathcal L = \{I\subseteq [K] : L\subseteq I\}$, we have
 \begin{align*}
\E_{S} \big[ \one_{I\setminus L} \one_{I\setminus L } \transp \cdot \indi_\mathcal{L}(I) \big] \cdot \prod_{\ell \in L} [1-\pi_S(\ell)] & \preceq  \E_{S-|L|} [ \one_I \one_I\transp] \cdot \prod_{\ell \in L} \pi_S(\ell). \tag{d} \label{E_pi_matrix}
\end{align*}
Finally, if $\pi:=\pi_S$ satisfies $\|\pi\|_\infty < 1 $, then for any $K\times K$ matrix $A$ we have
\begin{align*}
    \| A \odot \E[\one_I \one_I \transp ] \| \leq \frac{1 + \|\pi\|_\infty}{(1 - \|\pi \|_\infty)^2} \cdot \|D_{\pi } [A -\diag(A)] D_{\pi}\| + \| \diag(A)D_{\pi}\|.  \tag{e} \label{th_poisson_odot}
\end{align*}
\end{theorem}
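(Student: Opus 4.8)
The plan is to read the rejective design as the Poisson design of Definition~\ref{def_prob_models} conditioned on the event $\{|I|=S\}$, and to establish (a)--(e) in that order, reusing the earlier items; each of the five is in essence the corresponding lemma of \cite{rusc22}, so I would follow those arguments.

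For (a) I would use the conditional-Poisson closed form. Writing $\lambda_k=p_k/(1-p_k)$ for the odds and $e_m$ for the elementary symmetric polynomial of degree $m$, one has $\P_S(I)=\prod_{i\in I}\lambda_i/e_S(\lambda)$ and hence $\pi_S(i)=\lambda_i\,e_{S-1}(\lambda_{-i})/e_S(\lambda)$, where $\lambda_{-i}$ drops the $i$-th coordinate. The two-sided bound $(1-\|p\|_\infty)\,p_i\le\pi_S(i)\le 2p_i$ then follows by comparing $e_{S-1}(\lambda_{-i})$ with $e_S(\lambda)$ through Newton-/Maclaurin-type inequalities for elementary symmetric functions together with $\sum_k p_k=S$ (so $\sum_k\lambda_k\ge S$); this is \cite{rusc22}[Lemma~1], and with $\|p\|_\infty\le 1/6$ it gives the frequently used estimate $p_i\le\tfrac65\pi_i$. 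For (b), the monotonicity $\pi_{S-1}(i)\le\pi_S(i)$, I would construct a coupling that grows a level-$(S-1)$ rejective sample into a level-$S$ one by inserting a single extra coordinate; since insertion never deletes an element, every coordinatewise marginal can only increase. For the negative correlation (c), $\P_S(\{i,j\}\subseteq I)=\E_S[\delta_i\delta_j]\le\E_S[\delta_i]\,\E_S[\delta_j]=\pi_S(i)\pi_S(j)$ for $i\neq j$, I would invoke that a family of independent Bernoullis conditioned on its sum is negatively associated; note that the bare Poissonisation trick of \cite{rusc21} is not sufficient here because it loses a multiplicative constant.

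Part (d) is the matrix refinement of (a)--(b), proved by peeling off the elements of $L$ one at a time. Conditioning a level-$S$ rejective sample on $\ell\in I$, the restriction of the sample to $[K]\setminus\{\ell\}$ is again rejective, now of level $S-1$, so Bayes' rule gives $\E_S[\one_{I\setminus\{\ell\}}\one_{I\setminus\{\ell\}}\transp\mid \ell\in I]\cdot[1-\pi_S(\ell)]\preceq \pi_S(\ell)\cdot\E_{S-1}[\one_I\one_I\transp]$ in the Loewner order; iterating $|L|$ times, which reassembles the restriction $\indi_{\mathcal L}(I)$ to supports containing $L$, and using (b) to lift the intermediate levels $S-1,S-2,\dots$ back up to $S-|L|$ inside the final outer-product expectation, produces exactly the claimed inequality. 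Each step preserves $\preceq$ because it consists of multiplication by a nonnegative scalar and conjugation by a coordinate embedding $R$, both order-preserving.

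The main obstacle is (e), which has to convert the entrywise second-order estimates into an operator-norm bound on a Hadamard product. Splitting $\E[\one_I\one_I\transp]=D_\pi+N$ with $N$ the zero-diagonal matrix of pairwise inclusion probabilities, we get $A\odot\E[\one_I\one_I\transp]=\diag(A)D_\pi+[A-\diag(A)]\odot N$, so the first summand is exactly the term $\|\diag(A)D_\pi\|$ and everything reduces to bounding $\|[A-\diag(A)]\odot N\|$ by $\tfrac{1+\|\pi\|_\infty}{(1-\|\pi\|_\infty)^2}\,\|D_\pi[A-\diag(A)]D_\pi\|$. I would factor the off-diagonal part as $N=D_\pi C D_\pi$ with $C_{ij}=\pi_{ij}/(\pi_i\pi_j)$, use the identity $[A-\diag(A)]\odot(D_\pi C D_\pi)=(D_\pi[A-\diag(A)]D_\pi)\odot C$, and thereby reduce to a quadratic-form estimate $u\transp\big((D_\pi B D_\pi)\odot C\big)u\le\tfrac{1+\|\pi\|_\infty}{(1-\|\pi\|_\infty)^2}\,\|D_\pi B D_\pi\|\,\|u\|^2$ for zero-diagonal $B=A-\diag(A)$. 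Feeding in the Hájek-type expansion $\pi_{ij}=\pi_i\pi_j\big(1-\tfrac{(1-\pi_i)(1-\pi_j)}{\sum_k\pi_k(1-\pi_k)}\big)+(\text{lower order})$, so that $C$ is a controlled perturbation of the all-ones matrix, then separating the all-ones part (which reproduces $\|D_\pi B D_\pi\|$ since $B\odot\one\one\transp=B$) from a Schur-bounded remainder, yields the constant. Extracting precisely $\tfrac{1+\|\pi\|_\infty}{(1-\|\pi\|_\infty)^2}$ from this rejective/Poisson comparison is where essentially all the work lies, and it is the technical core of the relevant lemma in \cite{rusc22}, whose computation I would reproduce.
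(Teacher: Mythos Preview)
The paper does not prove this theorem: it is introduced with the sentence ``we will heavily rely on the following theorem, collecting results from \cite{rusc22}'' and is then used as a black box in the proof of Corollary~\ref{the_corollary}. So there is no proof in the paper to compare your proposal against; your plan to reproduce the arguments of \cite{rusc22} is exactly what one would do, and the paper itself defers to that reference.

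That said, since you have written out sketches, a couple of them could use tightening. In (d), your claim that conditioning a level-$S$ rejective sample on $\ell\in I$ and restricting to $[K]\setminus\{\ell\}$ yields ``a level-$(S-1)$ rejective sample'' is true, but it is rejective on the ground set $[K]\setminus\{\ell\}$ with the same odds, which is not the same distribution as $\P_{S-1}$ on $[K]$ appearing on the right-hand side; the passage from one to the other, and the role of the factors $1-\pi_S(\ell)$, is where the actual inequality enters, and invoking (b) alone does not bridge that gap. In (e), the factorisation $[A-\diag(A)]\odot(D_\pi C D_\pi)=(D_\pi[A-\diag(A)]D_\pi)\odot C$ is correct, but controlling $\|\,\cdot\,\odot C\|$ by a scalar multiple of $\|\cdot\|$ requires more than $|C_{ij}|\le 1$ from (c); the precise constant $\frac{1+\|\pi\|_\infty}{(1-\|\pi\|_\infty)^2}$ in \cite{rusc22} comes from a careful rejective/Poisson comparison rather than from treating $C$ as a generic Schur multiplier, so ``separating the all-ones part from a Schur-bounded remainder'' is the right shape but understates the work. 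None of this is a fatal gap---each is exactly the computation in \cite{rusc22} you say you would follow---but the sketches as written do not yet stand on their own.
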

With these results in hand we can finally prove Corollary~\ref{the_corollary}.

%%%%%%%%%%%%%%%%%%%%%%%
%%% proof of corollary with ALL the inequalities
%%%%%%%%%%%%%%%%%%%%%%%
\begin{proof}[of Corollary~\ref{the_corollary} including (\ref{E_RtHHR}-\ref{E_RtNtNR}) and (\ref{E_RtHHR_indl}-\ref{E_RtNtNR_indl})] \label{proof_the_corollary}
\phantom{bla}
%%%%%%%%%%%%%%%%
%% proof of corollary (a/b)
%%%%%%%%%%%%%%%%
\newline
\textbf{(a)} We want to show that for a matrix $\hol$ with zero diagonal we have
\begin{align*}
    \| \E[ \weights^{-1}  R_I\transp \hol_{I,I} R_I  \weights ^{-1}]\| & \leq 3\cdot \| \weights \hol \weights \|.
\end{align*}
Using the identities $A_{I,I} = R_I A R_I\transp$ and $R_I\transp R_I=\diag(\one_I)$, we can rewrite for a general matrix $A$ and a diagonal matrix $D$ \begin{align*}
D  R_I \transp A_{I,I} R_I  D 
= D R_I\transp R_I A  R_I \transp R_I D
&=  D \diag(\one_I) A  \diag(\one_I) D \\
&=  \diag(\one_I) D  A D  \diag(\one_I) = (D  A D) \odot (\one_I  \one_I\transp ).
\end{align*}
Using Theorem~\ref{th_poisson}\eqref{th_poisson_odot} and $\|\pi\|_\infty \leq 1/3$ we therefore get for $\hol$ with zero-diagonal
\begin{align*}
    \| \E[ \weights^{-1}  R_I \transp \hol_{I,I}  R_I  \weights ^{-1}]\|  
    & =\| \E[ (\weights^{-1} \hol \weights ^{-1})\odot (\one_I  \one_I\transp ) ] \|\\
    & =\|  (\weights^{-1} \hol \weights^{-1}) \odot\E[ \one_I  \one_I\transp  ] \| \\
    &\leq 3 \|D_{\pi} \weights^{-1} \hol \weights^{-1}  D_{\pi }\|= 3 \| \weights \hol \weights \|,
\end{align*}    
which proves (a). \hfill \textbf{(a)\checkmark} 

%%%%%%%%%%%%%%%%
%% proof of corollary (b)
%%%%%%%%%%%%%%%%
\noindent \textbf{(b)} We want to show that for a matrix $\hol$ with zero diagonal we have
\begin{align*}
\| \E[\weights^{-1}  R_I\transp \hol_{I,I} \hol_{I,I} \transp R_I  \weights^{-1}]\|  & \leq \tfrac{9}{2}  \cdot\| \weights \hol \weights\|^2 + \tfrac{3}{2} \cdot \max_k \|e_k\transp \hol \weights\|^2.
\end{align*}
We again rewrite the expression, whose expectation we need to estimate, as 
\begin{align*}
 R_I \transp \hol_{I,I} \hol_{I,I}\transp R_I&=   R_I \transp  R_I \cdot \hol \cdot R_I\transp  R_I \cdot \hol \transp \cdot R_I\transp R_I \\
 &=    \diag(\one_I) \cdot \hol  \cdot \diag(\one_I) \cdot \hol \transp \cdot  \diag(\one_I) \\
  & = [\hol \cdot \diag(\one_I) \cdot \hol \transp] \odot (\one_I\one_I\transp) \\
  &= \big( \sum_{k\in I} \hol_k \hol_k\transp \big) \odot (\one_I\one_I\transp)
  = \sum_{k \in I} (\hol_k \hol_k\transp) \odot (\one_I\one_I\transp).
\end{align*}
Since the $k$-th entry of $\hol_k$ and therefore both the $k$-th row and $k$-th column of $\hol_k \hol_k\transp$ are zero, we have $(\hol_k \hol_k\transp) \odot (\one_I\one_I\transp) = (\hol_k \hol_k\transp) \odot (\one_{I\setminus\{k\}} \one_{I\setminus\{k\}}\transp)$, yielding
\begin{align}
 R_I \transp \hol_{I,I} \hol_{I,I}\transp R_I& = \sum_{k \in I} (\hol_k \hol_k\transp) \odot (\one_{I\setminus\{k\}} \one_{I\setminus\{k\}}\transp) \notag \\
 &= \sum_{k} (\indi_I(k) \cdot \hol_k \hol_k\transp) \odot (\one_{I\setminus\{k\}} \one_{I\setminus\{k\}}\transp)\notag \\
 &=\sum_{k}  (\hol_k \hol_k\transp) \odot (\one_{I\setminus\{k\}} \one_{I\setminus\{k\}}\transp \cdot \indi_I(k) ). \label{HHstar_expansion}
\end{align}
Using the Schur Product Theorem, which says that for p.s.d matrices $A, P, \bar P$, with $P_{ij} , \bar P_{ij} \geq 0$ and $ P \preceq \bar P$ we have $ A \odot P \preceq A \odot \bar P$, together with Theorem~\ref{th_poisson}\eqref{E_pi_matrix} further leads to
\begin{align}
\E_S \left[ R_I \transp \hol_{I,I} \hol_{I,I}\transp R_I \right]&=
    \sum_{k} (\hol_k \hol_k\transp) \odot \E_{S} \big[ \one_{I\setminus\{ k\}} \one_{I\setminus \{k\}} \transp \cdot \indi_I(k) \big] \notag\\
    &\preceq \sum_{k}  ( \hol_k \tfrac{\pi_S(k)}{1-\pi_S(k)}  \hol_k\transp ) \odot \E_{S-1}[ \one_{I} \one_{I}\transp]  \nonumber \\
    &= \big( \sum_{k} \hol_k \tfrac{\pi_S(k)}{1-\pi_S(k)}  \hol_k\transp \big)  \odot \E_{S-1}[ \one_{I} \one_{I}\transp] \nonumber \\
    &=(\hol \diag(\tfrac{\pi_S}{1-\pi_S}) \hol\transp)\odot \E_{S-1}[ \one_{I} \one_{I}\transp] .\notag 
\end{align}
Abbreviating $M: = \hol \diag(\tfrac{\pi_S}{1-\pi_S}) \hol\transp$, and using Theorem~\ref{th_poisson} \eqref{th_poisson_odot} and~\eqref{pi_Sm1_leq} we get
\begin{align*}
\| \E_S \left[ \weights^{-1} R_I \transp \hol_{I,I} \hol_{I,I}\transp R_I \weights^{-1} \right] \| 
&= \| \weights^{-1}\, \E_S \left[  R_I \transp \hol_{I,I} \hol_{I,I}\transp R_I  \right] \weights^{-1} \| \\
&\leq \| (\weights^{-1} M \weights^{-1} )\odot \E_{S-1}[ \one_{I} \one_{I}\transp] \| \\
& \leq 3\|D_{\pi_{S-1}} [\weights^{-1} M \weights^{-1}  - \diag(\weights^{-1} M \weights^{-1} )] D_{\pi_{S-1}}\| \\
& \hspace{3.5cm}+ \| \diag(\weights^{-1} M \weights^{-1} )D_{\pi_{S-1}}\| \\
& \leq 3\| \weights M \weights  - \diag(\weights M \weights)\| + \| \diag(M) \|\\
& \leq 3\| \weights M \weights\| + \| \diag(M) \|,
\end{align*}
where in last inequality we have used that $\weights M \weights$ is positive semidefinite. Combining the inequality above with the bounds
\begin{align*}
\| \weights M \weights\| &= \| \weights \hol \weights \diag(\tfrac{1}{1-\pi})  \weights \hol\transp \weights\| \leq (1-\|\pi\|_\infty)^{-1} \| \weights \hol \weights\|^2,\\
\| \diag(M) \| & = \max_k e_k \transp \hol \weights \diag(\tfrac{1}{1-\pi})  \weights \hol\transp e_k \leq  (1-\|\pi\|_\infty)^{-1} \max_k \|e_k\transp \hol \weights\|^2,
\end{align*}
and our assumption that $\|\pi\|_\infty \leq 1/3$ leads to (b).\hfill \textbf{(b)\checkmark} \\

%%%%%%%%%%%%%%%%
%% special cases of corollary (b)
%%%%%%%%%%%%%%%%
\noindent \textbf{(\ref{E_RtHHR}-\ref{E_RtNtNR})} We next specialise the general inequality from (b) to the three concrete matrices needed in the proofs of Lemma~\ref{lemma_claim1}-\ref{lemma_claim4}.
For the case $H = \mathbb{I}-\pdico\transp\pdico $, note that since $\weights \pdico\transp\pdico\weights $ is a positive semidefinite matrix we have 
\begin{align}\|\weights H \weights \| = \|\weights \pdico\transp\pdico\weights - \diag (\weights \pdico\transp\pdico\weights) \| \leq \|\weights \pdico\transp\pdico\weights\| = \|\pdico\weights\|^2. \notag
\end{align}
Further note that the k-th entry of $e_k \transp H$ is zero, meaning
\begin{align}
  \|e_k\transp H\weights\| =\|(e_k\transp - \patom_k \transp \pdico) \weights\| \leq\| \patom_k \transp \pdico \weights\|   \leq \|\pdico \weights\|.\notag
\end{align}
So, as long as $\|\pdico \weights\| \leq 1/3$, which holds in both regimes, we get 
\begin{align*}
\| \E[\weights^{-1}  R_I\transp H_{I,I} H_{I,I} \transp R_I  \weights^{-1}]\|  & \leq  9/2  \cdot\|\pdico\weights\|^4 + 3/2 \cdot \|\pdico \weights\|^2  \\
&\leq  (9/2 \cdot 1/3^2 + 3/2 ) \cdot \|\pdico \weights\|^2 = 2\|\pdico \weights\|^2.
\end{align*}
In case $\hol = (\pdico \holdiag -Z)\transp \pdico = (\dico \holdiag - Z D_\alpha )\transp \pdico$, we use 
\begin{align} 
\|\weights \hol \weights \| \leq \| (\dico \holdiag - Z D_\alpha ) \weights\|\cdot \| \pdico \weights \| &= (\|\dico \weights \holdiag + Z\weights D_\alpha \|) \cdot \| \pdico \weights \| \notag\\
&\leq   (\|\dico \weights\| \cdot \eps^2/2 + \| Z\weights\|) \cdot \| \pdico \weights \|. \notag
\end{align}
Further since $\holdiag_{kk} = \ip{\patom_k}{z_k}$ and $\mathbb{I} - \patom_k \patom_k\transp$ is an orthogonal projection we have
\begin{align} 
\|e_k \transp \hol \weights \| &\leq \| \patom_k \ip{\patom_k}{z_k} - z_k\|\cdot \| \pdico \weights \| \notag\\
&= \| (\patom_k \patom_k\transp -  \mathbb{I}) z_k\|\cdot \| \pdico \weights \| \leq  \|z_k\| \cdot \| \pdico \weights \| \leq \eps \cdot \| \pdico \weights\|. \notag
\end{align}
So, using that $\eps \leq \sqrt{2}$, as long as $\|\dico \weights\| \leq 1$, we get
\begin{align*}
2 \cdot \| \E[\weights^{-1}  R_I\transp &\hol_{I,I} \hol_{I,I} \transp R_I  \weights^{-1}]\|  \\
&\leq  (9\|\dico \weights\|^2 \cdot \eps^4/4  + 9 \eps^2 \| Z\weights\| + 9\| Z\weights\|^2  + 3 \eps^2 )\cdot\| \pdico \weights\|^2.\notag\\
& \leq  (9\cdot \eps^2/2  + 18 \eps \| Z\weights\| + 9\| Z\weights\|^2  + 3 \eps^2 )\cdot\| \pdico \weights\|^2 \\
&\leq (3\| Z\weights\|  + 3 \eps)^2 \cdot\| \pdico \weights\|^2.
\end{align*}
Finally note that $\|\weights \hol \weights \|  = \|\weights \hol \transp \weights \| $. Combining this with the bound 
\begin{align} 
\|e_k \transp \hol\transp  \weights \| &\leq \| \patom_k \| \cdot \| (\dico \holdiag - Z D_\alpha ) \weights\|  \leq \|\dico \weights\| \cdot \eps^2/2 + \| Z\weights\|.
\end{align}
we get that again as long as $\|\pdico \weights\| \leq 1/3$
\begin{align*}
\| \E[\weights^{-1}  R_I\transp \hol_{I,I}\transp \hol_{I,I}  R_I  \weights^{-1}]\| 
& \leq  (\|\dico \weights\| \cdot \eps^2/2 + \| Z\weights\|)^2 \cdot (9/2 \cdot \| \pdico \weights \|^2 + 3/2) \\
&\leq  (\|\dico \weights\| \cdot \eps^2/2 + \| Z\weights\|)^2 \cdot 2,
\end{align*}
which proves the third inequality.
\hfill \textbf{(\ref{E_RtHHR}-\ref{E_RtNtNR})\checkmark} \\

%%%%%%%%%%%%%%%%
%% proof of corollary (c)
%%%%%%%%%%%%%%%%
\noindent \textbf{(c)} We want to show that for a subset $\mathcal{G}$ of all supports of size $S$ and a pair of ${d\times K}$ matrices $W=(w_1 \ldots ,w_K)$ and $V=(v_1, \ldots ,v_K)$ we have
\begin{align*}
    \| \E[  W R_I\transp R_I V\transp \cdot \indi_I(\ell) \indi_{\mathcal{G}}(I)]\|&\leq \pi_\ell \cdot \left(\|W\weights\|\cdot \| V \weights \| +\|w_\ell\|\cdot \|v_\ell\|\right).
    \end{align*}For $\ell \in I$ we can rewrite $W R_I \transp R_I V\transp = W \diag(\one_{I})V\transp = W \diag(\one_{I\setminus\{\ell\}})V\transp + w_\ell v_\ell\transp$.
Using this split and that by Theorem~\ref{th_poisson}\eqref{pi_ij_leq} we have $\P(\{\ell,k\}\subseteq I) \leq \pi_\ell \pi_k$ yields
 \begin{align*}
\| \E [&W R_I\transp R_I V \transp \cdot \indi_I(\ell) \indi_{\mathcal{G}}(I) ]\| \\
&\leq  \| W \, \E \left[\diag(\one_{I\setminus\{\ell\}})\cdot \indi_I(\ell) \indi_{\mathcal{G}}(I)\right] V \transp  \| + \|w_\ell v_\ell\transp\| \cdot \E \left[\indi_I(\ell)\indi_{\mathcal{G}}(I) \right]\\
&\leq \| W \weights \| \cdot \| \weights^{-1} \E \left[\diag(\one_{I\setminus\{\ell\}})\cdot \indi_I(\ell) \indi_{\mathcal{G}}(I)\right] \weights^{-1}\|  \cdot \| \weights V \transp \|  + \|w_\ell v_\ell\transp\| \cdot \pi_\ell \\
&=\| W \weights \| \cdot \| V \weights \| \cdot \max_{k\neq \ell} \,\pi_k^{-1} \cdot \P(\{\ell,k\}\subseteq I \cap I\in \mathcal{G} ) +\|w_\ell\|\cdot  \| v_\ell\| \cdot \pi_\ell\\
&\leq \pi_\ell \cdot \left( \| W \weights \| \cdot \| V \weights \|  + \|w_\ell\|\cdot  \| v_\ell\|\right)
\end{align*}
 which completes the proof of (c).
\hfill \textbf{(c)\checkmark} \\

%%%%%%%%%%%%%%%%
%% proof of corollary (d)
%%%%%%%%%%%%%%%%
\noindent \textbf{(d)}
We prove that for a general matrix $\hol$ with zero diagonal we have
\begin{align}
\| \E [ \weights ^{-1} &\mathbb{I}_{\ell^c} R_I \transp \hol_{I,I} \hol_{I,I}\transp R_I\mathbb{I}_{\ell^c} \weights ^{-1}  \cdot \indi_I(\ell)]\| \notag \\
     & \leq \frac{\pi_\ell}{1-\pi_\ell} \left(3\| \weights \hol e_\ell \|^2 + \max_{k} \hol_{k\ell}^2 + \tfrac{9}{2} \| \weights \hol \weights\|^2 + \tfrac{3}{2} \max_k \|e_k\transp \hol\weights\|^2 \right). \label{cor_h_nonsym}
\end{align}
Using \eqref{HHstar_expansion} and the identity $\mathbb{I}_{\ell^c} = \diag(\one_{[K]\setminus\{\ell\}})$, we first rewrite
 \begin{align*}
\mathbb{I}_{\ell^c} R_I \transp \hol_{I,I} \hol_{I,I}\transp R_I \mathbb{I}_{\ell^c} \cdot  \indi_I(\ell)& = (R_I \transp \hol_{I,I} \hol_{I,I}\transp R_I) \odot (\one_{[K]\setminus\{\ell\}} \one_{[K]\setminus\{\ell\}} \transp) \cdot  \indi_I(\ell)\\
&= {\sum}_{k}  (\hol_k \hol_k\transp) \odot \big(\one_{I\setminus\{k,\ell\}} \one_{I\setminus\{k,\ell\}}\transp \cdot \indi_I(\ell)\indi_I(k)\big).
 \end{align*}
As before an application of the Schur Product Theorem and Theorem~\ref{th_poisson}\eqref{E_pi_matrix} leads to
  \begin{align*}
 \E_S[ \mathbb{I}_{\ell^c} &R_I \transp \hol_{I,I} \hol_{I,I}\transp R_I \mathbb{I}_{\ell^c} \cdot  \indi_I(\ell)] 
 = \sum_{k}  (\hol_k \hol_k\transp) \odot \E_S\big[\one_{I\setminus\{k,\ell\}} \one_{I\setminus\{k,\ell\}}\transp \cdot \indi_I(\ell)\indi_I(k)\big]\\
&  \preceq  \frac{\pi_S(\ell)}{1-\pi_S(\ell)} (\hol_\ell \hol_\ell \transp)\odot \E_{S-1}[ \one_{I} \one_{I}\transp] \\
&\hspace{3cm} + \sum_{k\neq \ell}  \frac{\pi_S(\ell)}{1-\pi_S(\ell)} \left(\hol_k   \frac{\pi_S(k)}{1-\pi_S(k)}  \hol_k\transp \right) \odot \E_{S-2}[ \one_{I} \one_{I}\transp] \\
&\preceq  \frac{\pi_S(\ell)}{1-\pi_S(\ell)} \Big( (\hol_\ell \hol_\ell \transp)\odot \E_{S-1}[ \one_{I} \one_{I}\transp]  +  M \odot \E_{S-2}[ \one_{I} \one_{I}\transp] \Big),
 \end{align*}
 where again $M = \hol \diag(\tfrac{\pi_S}{1-\pi_S}) \hol\transp$.
Finally, applying again Theorem~\ref{th_poisson} \eqref{th_poisson_odot} and~\eqref{pi_Sm1_leq} and similar simplifications as in the proof of (b) yield
 \begin{align*}
\| \E_S[ &\weights ^{-1} \mathbb{I}_{\ell^c} R_I \transp \hol_{I,I} \hol_{I,I}\transp R_I \mathbb{I}_{\ell^c} \weights ^{-1} \cdot  \indi_I(\ell)] \|\\
& \leq  \frac{\pi_\ell}{1-\pi_\ell} \left(\| (\weights ^{-1}\hol_\ell \hol_\ell \transp \weights ^{-1})\odot \E_{S-1}[ \one_{I} \one_{I}\transp] \|  +  \| (\weights ^{-1} M \weights ^{-1}) \odot \E_{S-2}[ \one_{I} \one_{I}\transp] \|\right)\\
& \leq  \frac{\pi_\ell}{1-\pi_\ell} \left(3\| \weights \hol_\ell \hol_\ell\transp \weights\| + \|\diag(\hol_\ell \hol_\ell\transp)\| + 3\| \weights M\weights\| + \|\diag(M)\| \right)\\
& \leq  \frac{\pi_\ell}{1-\pi_\ell} \left(3\| \weights \hol e_\ell \|^2 + \textstyle \max_{k} \hol_{k\ell}^2 + \tfrac{9}{2} \| \weights \hol \weights\|^2 + \tfrac{3}{2} \textstyle \max_k \|e_k\transp \hol\weights\|^2 \right),
 \end{align*}
which together with our assumption that $\|\pi\|_\infty \leq 1/3$ leads to (d). \hfill \textbf{(d) \checkmark} \\

%%%%%%%%%%%%%%%%
%% special cases of corollary (d)
%%%%%%%%%%%%%%%%
 \noindent \textbf{(\ref{E_RtHHR_indl}-\ref{E_RtNtNR_indl})} Finally, we specialise the general inequality from (d) to the three concrete cases needed in the proofs of Lemma~\ref{lemma_claim1}-\ref{lemma_claim4}.
Reusing the bounds for $H$ from the corresponding special case of (b) as well as $H^2_{k\ell} = \absip{\patom_k}{\patom_\ell}^2\leq \mu(\pdico)^2$ and the assumption $\| \pdico \weights\| \leq 1/3$, yields
 \begin{align}
\| \E [ \weights ^{-1} &\mathbb{I}_{\ell^c} R_I \transp H_{I,I} H_{I,I}\transp R_I\mathbb{I}_{\ell^c} \weights ^{-1}  \cdot \indi_I(\ell)]\| \notag \\
     & \leq 3/2 \cdot \pi_\ell \cdot \left(3\cdot\| \pdico \weights\|^2 + \mu(\pdico)^2 + 9/2 \cdot 1/9 \cdot \| \pdico \weights\|^2  + 3/2 \cdot \| \pdico \weights\|^2 \right) \notag\\
     &\leq  3/2 \cdot \pi_\ell\cdot \max\{\mu(\pdico)^2 , \| \pdico \weights\|^2 \} \cdot 6 \leq 9\cdot \pi_\ell\cdot \max\{\mu(\pdico)^2 , \| \pdico \weights\|^2 \} . \notag
\end{align}
To prove the other two inequalities note that for $\hol = W\transp V$ we get 
\begin{align}
    \| \E [ \weights ^{-1} &\mathbb{I}_{\ell^c} R_I \transp \hol_{I,I} \hol_{I,I}\transp R_I\mathbb{I}_{\ell^c} \weights ^{-1}  \cdot \indi_I(\ell)]\| \cdot \frac{1-\pi_\ell}{\pi_\ell}  \notag \\
     & \leq 3\| \weights W\transp v_\ell \|^2 + \max_{k} (w_k\transp v_\ell)^2 + 9\| \weights W\transp V \weights\|^2 + 3 \max_k \|w_k\transp V \weights\|^2  \notag \\
     %&\leq (3\|  W\weights\|^2 + \max_{k} \| w_k\|^2) \, \|v_\ell \|^2 + (\tfrac{9}{2} \| W\weights\|^2 + \tfrac{3}{2} \max_k \|w_k\|^2 )\, \| V \weights\|^2  \notag \\
     %& \leq \big( 3\|  W\weights\|^2 +\max_{k} \| w_k\|^2\big)\cdot \big(\|v_\ell \|^2 + \tfrac{3}{2}\| V \weights\|^2\big)\notag\\
     & \leq \big (\max_{k} \| w_k\|^2 + 3\|  W\weights\|^2\big ) \cdot \big(\max_\ell \|v_\ell \|^2 + 3 \| V \weights\|^2\big).\label{sym_bound}
\end{align}
Applying this to $\hol = (\pdico \holdiag - Z)\transp \pdico$ and reusing the bounds for the corresponding special case of (b) with $\|\pdico \weights\| \leq 1/8$ yields 
\begin{align}
    \| \E [ &\weights ^{-1} \mathbb{I}_{\ell^c} R_I \transp \hol_{I,I} \hol_{I,I}\transp R_I\mathbb{I}_{\ell^c} \weights ^{-1}  \cdot \indi_I(\ell)]\|  \notag \\     
    & \leq 3/2 \cdot \pi_\ell \cdot \big[ 3\cdot (\|\pdico \weights\| \cdot \eps^2/2 + \|Z\weights\|)^2 +\eps^2\big]\cdot \big(1 + 3 \cdot \| \pdico \weights\|^2\big)\notag \\
    &\leq 3/2 \cdot \pi_\ell\cdot \delta^2 \cdot \big[ 3\cdot (\|\pdico \weights\| \cdot \eps/2 + 1)^2 + 1\big]\cdot \big(1 + 3 \cdot \| \pdico \weights\|^2\big)\leq 9 \cdot \pi_\ell \cdot \delta^2.\notag
\end{align}
Due to the symmetry of \eqref{sym_bound} this also proves the case of $\hol\transp$.
\hfill \textbf{(\ref{E_RtHHR_indl}-\ref{E_RtNtNR_indl})\checkmark} \\
\phantom{bla}
\end{proof}

%%% put bibliography path here 
\bibliography{karinbibtex_3}

\end{document}